\renewcommand*{\backrefalt}[4]{%
    \ifcase #1 \footnotesize{(Not cited.)}%
    \or        \footnotesize{(Cited on page~#2.)}%
    \else      \footnotesize{(Cited on pages~#2.)}%
    \fi}
\DeclareMathOperator*{\argmin}{arg\,min}
\DeclareMathOperator{\tr}{tr}
\DeclareMathOperator{\id}{Id}
\DeclareMathOperator{\cov}{Cov}
\def\Nn{\mathcal{N}}
\def\E{\mathbb{E}}
\def\RR{\mathbb{R}}
\def\dd{\mathrm{d}}
\def\KL{\mathrm{KL}}
\def\zeros{\mathbf{0}}
\def\KLD{\mathrm{KL}^\otimes}
\def\br{\mathbb{R}}
\def\diag{\mathrm{diag}}
\def\aA{\mathbf{a}}
\def\bB{\mathbf{b}}
\def\uU{\mathbf{u}}
\def\vV{\mathbf{v}}
\def\det{\mathrm{det}} 
\theoremstyle{plain}
\newtheorem{theorem}{Theorem}[section]
\newtheorem{lemma}[theorem]{Lemma}
\theoremstyle{definition}
\newtheorem{definition}[theorem]{Definition}
\newtheorem{assumption}[theorem]{Assumption}
\theoremstyle{remark}
\newtheorem{remark}[theorem]{Remark}
\icmltitlerunning{Entropic Gromov-Wasserstein between Gaussian Distributions}
\begin{document}

\twocolumn[
\icmltitle{Entropic Gromov-Wasserstein between Gaussian Distributions}



\icmlsetsymbol{equal}{*}

\begin{icmlauthorlist}
\icmlauthor{Firstname1 Lastname1}{equal,yyy}
\icmlauthor{Firstname2 Lastname2}{equal,yyy,comp}
\icmlauthor{Firstname3 Lastname3}{comp}
\icmlauthor{Firstname4 Lastname4}{sch}
\icmlauthor{Firstname5 Lastname5}{yyy}
\icmlauthor{Firstname6 Lastname6}{sch,yyy,comp}
\icmlauthor{Firstname7 Lastname7}{comp}
\icmlauthor{Firstname8 Lastname8}{sch}
\icmlauthor{Firstname8 Lastname8}{yyy,comp}
\end{icmlauthorlist}

\icmlaffiliation{yyy}{Department of XXX, University of YYY, Location, Country}
\icmlaffiliation{comp}{Company Name, Location, Country}
\icmlaffiliation{sch}{School of ZZZ, Institute of WWW, Location, Country}

\icmlcorrespondingauthor{Firstname1 Lastname1}{first1.last1@xxx.edu}
\icmlcorrespondingauthor{Firstname2 Lastname2}{first2.last2@www.uk}

\icmlkeywords{Machine Learning, ICML}

\vskip 0.3in
]



\printAffiliationsAndNotice{\icmlEqualContribution} 

\begin{abstract}
We study the entropic Gromov-Wasserstein and its unbalanced version between (unbalanced) Gaussian distributions with different dimensions. When the metric is the inner product, which we refer to as inner product Gromov-Wasserstein (IGW), we demonstrate that the optimal transportation plans of entropic IGW and its unbalanced variant are (unbalanced) Gaussian distributions. Via an application of von Neumann's trace inequality, we obtain closed-form expressions for the entropic IGW between these Gaussian distributions. Finally, we consider an entropic inner product Gromov-Wasserstein barycenter of multiple Gaussian distributions. We
prove that the barycenter is a Gaussian distribution when the entropic regularization parameter is small. We further derive a closed-form expression for the covariance matrix of the barycenter.
\end{abstract}

\section{Introduction}
The recent advance in computation of optimal transport~\citep{cuturi2013sinkhorn, bonneel2015sliced, kolouri2016sliced, altschuler2017near, Dvurechensky-2018-Computational, lin2019efficient, deshpande2019max, nguyen2021distributional} has led to a surge of interest in using optimal transport for applications in machine learning and statistics. These applications include generative modeling \citep{arjovsky2017wasserstein,tolstikhin2018wasserstein,salimans2018improving,genevay2018learning,liutkus2019sliced}, unsupervised learning~\citep{ho2017multilevel, Ho_Probabilistic}, domain adaptation~\citep{courty2016optimal, damodaran2018deepjdot, ho2021lamda}, mini-batch computation~\citep{fatras2020learning,Nguyen_bomb}, and other applications~\citep{solomon2016entropic, Nguyen_3D}. 
In these above applications, optimal transport has been used to quantify the discrepancy  of the locations and mass between two probability measures which must be in the same space.

When probability measures lie in different spaces, their locations are not comparable, their inner structures become the main concern. 
In this case, the Gromov-Wasserstein distance is adopted to measure the discrepancy between their inner structures. 
The main idea of Gromov-Wasserstein distance is to consider the transportation of similarity matrices of points which are in the same spaces. However, such formulation of Gromov-Wasserstein distance is computationally expensive as we need to solve a quadratic programming problem. To reduce the computational cost of Gromov-Wasserstein distance,~\cite{peyre2016gromov} propose to regularize the objective function of Gromov-Wasserstein based on the entropy of the transportation plan, which we refer to as \emph{entropic Gromov-Wasserstein}. The entropic regularization idea had also been used earlier in optimal transport and had been shown to improve greatly the computation of optimal transport~\citep{cuturi2013sinkhorn, altschuler2017near, Dvurechensky-2018-Computational, lin2019efficient}. The improvement in approximation of Gromov-Wasserstein distance via the entropic regularization has led to an increasing interest of using that divergence to several applications, including deep generative models~\citep{bunne2019learning}, computer graphics~\citep{solomon2016entropic, xu2019scalable, chen2020graph}, and natural language processing~\citep{Alvarez-2018-Gromov, grave2019unsupervised}. When measures are not probability measures and unbalanced, i.e., they may have different total masses, unbalanced version of Gromov-Wasserstein, named unbalanced Gromov-Wasserstein, via the idea of unbalanced optimal transport~\citep{chizat2018scaling}
had been introduced in the recent work of~\cite{Peyreunbalanced}. The entropic unbalanced Gromov-Wasserstein has been used in robust machine learning applications~\citep{Peyreunbalanced}. Despite their practicalities, theoretical understandings of entropic (unbalanced) Gromov-Wasserstein are still nascent.
The recent work of~\cite{Salmona2021gromov} studied the closed-form expression of Gromov-Wasserstein between Gaussian distributions in different dimensions. However, to the best of our knowledge, the full theoretical analysis of entropic Gromov-Wasserstein and its unbalanced version between (unbalanced) Gaussian distributions in different dimensions has remained poorly understood.

\vspace{0.5 em}
\noindent
\textbf{Our contribution.} In this paper, we present a comprehensive study of the entropic Gromov-Wasserstein and its unbalanced version between (unbalanced) Gaussian distributions when inner product is considered as a cost function, which we refer to as entropic (unbalanced) inner product Gromov-Wasserstein (IGW). We also study its corresponding barycenter problem among multiple Gaussian distributions. Our work also complements the works of \cite{Gelbrich1990}, \cite{janati2020entropic} and ~\cite{Salmona2021gromov} when we give an explicit explanation of the effect of the entropic regularization on the OT and the geometric structure of the optimal transport plan between Gaussians.  More particular, our contribution is four-fold and can be summarized as follows:

\textbf{1. Balanced Gaussian measures:} We first provide a closed-form expression of the entropic inner product Gromov-Wasserstein between Gaussian probability measures  with zero means. We demonstrate that the expression depends mainly on eigenvalues of the covariance matrices of the two measures. Furthermore, an associate optimal transportation plan is shown to be also a zero-mean Gaussian measure. Our analysis hinges upon an application of von Neuman's trace inequality \citep{kristof1969neumann,horn_johnson_1991} for singular values of matrices.

\textbf{2. Unbalanced Gaussian measures:} Second, by relaxing marginal constraints via Kullback-Leibler (KL) divergence, we further study the entropic unbalanced IGW between two unbalanced Gaussian measures with zero means. The main challenge comes from the two $\KL$ terms that add another level of constraint in the objective function. To overcome that challenge, we show that the objective function can be broken down into several sub-problems that can be solved explicitly through some cubic and quadratic equations. That leads to an almost closed-form formulation of the optimal Gaussian transport plan of the unbalanced IGW between the unbalanced Gaussian measures.  

\textbf{3. Barycenter problem:} Finally, we investigate the (entropic) Gromov-Wasserstein barycenter problems with inner product cost, which we refer to as (entropic) IGW barycenter, among zero-mean balanced Gaussian measures. We prove that the barycenter of zero-mean Gaussian distributions is also a zero-mean Gaussian distribution with a diagonal covariance matrix. Reposing on that Gaussian barycenter, we can directly obtain a closed-form expression for the barycenter problem when the regularized parameter is sufficiently small, which is also the setting people widely use in practice.

\textbf{4. Revisiting entropic optimal transport between Gaussian distributions.} We note in passing that based on the new proof technique introduced for computing the entropic Gromov-Wasserstein between Gaussian distributions, we revisit the entropic Wasserstein between (unbalanced) Gaussian distributions in Appendix~\ref{sec:revisit_entropic_Gaussian} and provide a simpler proof than that in~\citep{janati2020entropic} to derive the closed-form expression for that problem.

\vspace{0.5 em}
\noindent
\textbf{Paper organization.} The paper is organized as follows. In Section \ref{sec:prelim}, we   provide backgrounds for the (unbalanced) inner product Gromov-Wasserstein and its corresponding entropic version. Next, we establish a closed-form expression of the entropic IGW between two balanced Gaussian distributions in Section~\ref{sec:entropic_GW} while proving that the entropic unbalanced IGW between unbalanced Gaussian measures also has a closed form in Section~\ref{sec:unbalanced_GW}. Subsequently, we present our study of the (entropic) IGW barycenter problem among multiple Gaussian distributions in Section~\ref{sec:barycenter_GW}. In Section~\ref{sec:empirical_study}, we carry out empirical studies to investigate the behavior of algorithms solving the entropic Gromov-Wasserstein before concluding with a few discussions in Section~\ref{sec:conclusion}. Additional proofs and auxiliary results are presented in the supplementary material.

\vspace{0.5 em}
\noindent
\textbf{Notation.}
We use the following notations throughout the paper. For a non-negative definite matrix $A$, if we do not specify the spectral decomposition of $A$, then generally, $\lambda_{a,i}$ or $\lambda_i(A)$ denotes the $i$-th largest eigenvalues of $A$. For random vectors  $X = (X_1,\ldots,X_m)$ and $Y= (Y_1,\ldots,Y_n)$, let $K_{xy}$ be the covariance matrix between $X$ and $Y$, which means that  $\big(K_{xy}\big)_{i,j}= \cov(X_i,Y_j)$. For square matrices $A$ and $B$, we write $A\succeq B$ or $B\preceq A$ if $A-B$ is non-negative definite. For a positive integer $n$, we denote by $\id_n$ the identity matrix of size $n\times n$ while $[n]$ stands for the set $\{1,2,\ldots,n\}$. For any real number $a$, $[a]^+$ indicates $\max\{a,0\}$. Lastly, we denote by $\mathcal{M}^+(\mathcal{X})$ the set of all positive measures in a space $\mathcal{X}$ and by $\mathcal{N}(\gamma,\Sigma)$ the Gaussian distribution in $\br^d$ with mean $\gamma$ and variance $\Sigma$ where $d$ will be specified in each case.

\section{Preliminaries}\label{sec:prelim}
In this section, we first provide background for the Gromov-Wasserstein distance between two probability distributions and discuss its properties when using $\ell_2$-norm as a cost function. Then, we present the formulation of  an inner product Gromov-Wasserstein problem between two (unbalanced) Gaussian measures with a discussion about its geometric properties. 
\subsection{Gromov-Wasserstein distance}
Let $\mu$ and $\nu$ be two probability measures on two Polish spaces $(\mathcal{X},d_\mathcal{X})$ and $(\mathcal{Y},d_\mathcal{Y})$, and let $c_\mathcal{X}:\mathcal{X}\times\mathcal{X}\to\br$ and $c_\mathcal{Y}:\mathcal{Y}\times\mathcal{Y}\to\br$ be two measurable functions. Then, with $p\geq 1$, the $p$-Gromov-Wasserstein distance between $\mu$ and $\nu$, denoted by $\mathsf{GW}_p(\mu,\nu)$, is defined as follows:
\begin{align}\label{def:gw_distance}
    \left(\inf_{\pi\in\Pi(\mu,\nu)}\mathbb{E}_{\pi\otimes\pi}\big|c_\mathcal{X}(X,X^\prime)-c_\mathcal{Y}(Y,Y^\prime)\big|^p\right)^{\frac{1}{p}},
\end{align}
where $(X,Y)$ and $(X^{\prime}, Y^{\prime})$ are independent and  identically distributed with probability distribution $\pi$ that belongs to the set of joint probability distributions $\Pi(\mu,\nu)$, in which their marginal distributions are the corresponding distributions $\mu$ and $\nu$.


Although the above problem always admits an optimal solution \citep{vayer2020contribution}, it is computationally expensive as we need to solve a quadratic optimization problem. For example, when the dimensions of $\mathcal{X}$ and $\mathcal{Y}$ are $m$ and $n$, respectively, the complexity of solving the Gromov-Wasserstein minimization problem is $\mathcal{O}(m^2n+n^2m)$ \citep{peyre2016gromov,vincentcuaz2022semirelaxed}.


\subsection{Gromov-Wasserstein distance  using $\ell_2$-norm}\label{section:l2_GW} 
Similar to the Wasserstein distance, there is no closed-form expression for the Gromov-Wasserstein distance between general distributions. Even in the important and specific cases when the input distributions are Gaussian, we still do not know if the optimal transport plan needs to be Gaussian or not \cite{Salmona2021gromov}. However, there are some certain understandings  of the Gromov-Wasserstein distance when the cost functions are Euclidean norms \citep{Salmona2021gromov, vayer2020contribution, Gelbrich1990},
\begin{align}\label{eq:GW_ell_2_formulation}
    \left(\inf_{\pi\in\Pi(\mu,\nu)}\mathbb{E}_{\pi\otimes\pi}\big|\|X-X^\prime\|^2_m-\|Y-Y^\prime\|_n\big|^2\right)^{\frac{1}{2}},
\end{align}
where $\|\cdot\|_m$ and $\|\cdot\|_n$ are Euclidean norms on $\br^m$ and $\br^n$, respectively. 
\begin{theorem}\cite{Salmona2021gromov}
Let $\mu \sim \mathcal{N}(m_0,\Sigma_0)$ and $\nu \sim\mathcal{N}(m_1,\Sigma_1)$ where $m_0\in \mathbb{R}^m$,$m_1\in \mathbb{R}^n$ are two vector means and $\Sigma_0\in \mathbb{R}^{m\times m}$ and $\Sigma_1\in \mathbb{R}^{n\times n}$ are the corresponding non-degenerated covariance matrices. Let $P_0,D_0$ and $P_1,D_1$ be respective diagonalizations of $\Sigma_0$ and $\Sigma_1$. Let us define $T_0:x\in \mathbb{R}^m \rightarrow P_0^{\top}(x-m_0) $ and $T_1:y\in \mathbb{R}^n\rightarrow P_1^{\top}(y-m_1)$. The Gromov-Wasserstein problem in equation~\eqref{eq:GW_ell_2_formulation} is equivalent to 
\begin{align*}
    \sup_{X\sim T_0\#\mu,T_1\#\nu} \sum_{i,j}\mathsf{Cov}(X_i^2,Y_j^2) + 2\big\|\mathsf{Cov}(X,Y) \big\|_F^2
\end{align*}
where $X=(X_1,\ldots,X_m)^{\top}, Y=(Y_1,\ldots,Y_n)^{\top}$ and $\|.\|_F$ is the Frobenius norm. 
\end{theorem}
This theorem shows that the $\ell_2$-norm Gromov-Wasserstein problem is equivalent to first shifting two input distributions to have mean zeros and then comparing covariance structures of the shifted distributions. After the shifting step, the translation invariant property is no longer needed and the problem is reduced to finding a match between the covariance structures. 


\subsection{Inner product Gromov-Wasserstein}
\label{sec:problem_settings}

We now reintroduce the Gromov-Wasserstein distance using the inner product  as a \textsl{cost} function and its unbalanced variant. 
\begin{definition}[\textbf{Inner product Gromov-Wasserstein}] \cite{Salmona2021gromov}
Let $X,X'\sim \mu$ be i.i.d random multivariates in $\mathbb{R}^m$ while $Y,Y'\sim \nu$ are i.i.d random multivariates in $\mathbb{R}^n$. Next, we assume that both $(X,Y)$ and $(X',Y')$ are jointly distributed as $\pi$.  
Then, the inner product Gromov-Wasserstein (IGW) problem is defined as
\begin{align*}
     \mathsf{IGW}(\mu,\nu):=\min_{\pi\in \Pi(\mu,\nu)}  \E_{\pi\otimes\pi} \Big\{ \big[ \langle X,X^{\prime}\rangle - \langle Y,Y^{\prime}\rangle \big]^2 \Big\}.
\end{align*}
\end{definition}
By adding the relative entropy term to the above objective function, we have the following definition, named \emph{entropic IGW}, is given by
\begin{align}
     \mathsf{IGW}_{\varepsilon}(\mu,\nu) := \min_{\pi\in \Pi(\mu,\nu) } &\E_{\pi\otimes\pi} \Big\{ \big[ \langle X,X^{\prime}\rangle - \langle Y,Y^{\prime}\rangle \big]^2 \Big\} \nonumber \\
     &\qquad+ \varepsilon \KL\big(\pi \| \mu\otimes \nu \big), \label{eq:entropic_IGW}
\end{align}
where $\varepsilon>0$ is a regularized parameter and $\KL(\alpha||\beta):=\int\log\Big(\frac{d\alpha}{d\beta}\Big)d\alpha+\int(d\beta-d\alpha)$ for any positive measures $\alpha$ and $\beta$.

\vspace{0.5 em}
\textbf{Entropic unbalanced IGW problem:} When $\mu$ and $\nu$ are unbalanced Gaussian measures which are not probability distributions, the entropic IGW formulation in equation~\eqref{eq:entropic_IGW} is no longer valid. One solution to deal with this issue is by regularizing the marginal constraints via KL divergence~\citep{chizat2018scaling}. The entropic IGW problem between unbalanced measures, which we refer to as \emph{entropic unbalanced IGW}, admits the following form:
\begin{align}
    \label{eq:UIGW_formulation}
     & \mathsf{UIGW}_{\varepsilon,\tau}(\mu,\nu):=\min_{\pi} \E_{\pi\otimes\pi} \Big\{ \big[ \langle X,X^{\prime}\rangle - \langle Y,Y^{\prime}\rangle \big]^2 \Big\}\nonumber \\
     &+ \tau \KL^{\otimes}(\pi_{x} \|\mu) + \tau \KL^{\otimes}(\pi_y \|\nu) +   
    \varepsilon \KL^{\otimes}\big(\pi \| \mu\otimes \nu \big),
\end{align}
where $\tau>0$, $\pi \in\mathcal{M}^+(\mathbb{R}^m\times\mathbb{R}^n)$ in the minimum, $\pi_x$ and $\pi_y$ are the projections of $\pi$ on $\mathbb{R}^m$ and $\mathbb{R}^n$, respectively, and $\KL^\otimes(\alpha||\beta):=\KL(\alpha\otimes\alpha||\beta\otimes\beta)$. Using the quadratic-KL divergence $\KL^\otimes$ \citep{sejourne2021unbalanced} in place of the standard $\KL$ will result in $\mathsf{UIGW}_{\varepsilon,\tau}$ being 2-homogeneous, i.e., if $\mu$ and $\nu$ are multiplied by $\theta\geq 0$, then the value of $\mathsf{UIGW}_{\varepsilon,\tau}(\mu,\nu)$ is multiplied by $\theta^2$. 

\textbf{Invariant property:} Unlike the $\ell_2$-norm Gromov Wasserstein in equation~\eqref{eq:GW_ell_2_formulation}, which is invariant to both translation and rotation \citep{Salmona2021gromov}, our $\mathsf{IGW}_{\varepsilon}$ and $\mathsf{UIGW}_{\varepsilon,\tau}$  only keeps unchanged under the latter transformation, which is shown in the following lemma with a note that the KL divergence is both rotation and translation invariant.
\begin{lemma}[\textbf{Invariant to rotation}]
\label{lemma:IGW_rotation}
Let $\mu$ and $\nu$ be two probability measures on $\br^m$ and $\br^n$. Let $T_m:x\to O_mx$ and $T_n:y\to O_ny$ be two functions in which $O_m$ and $O_n$ are orthogonal matrices of size $m\times m$ and $n\times n$, respectively. Then, we have
\begin{align*}
    \mathsf{IGW}_\varepsilon(T_m\#\mu,T_n\#\nu)=\mathsf{IGW}_\varepsilon(\mu,\nu),
\end{align*}
where $\#$ denotes the push-forward operator. 
\end{lemma}
The proof of Lemma~\ref{lemma:IGW_rotation} can be found in Appendix~\ref{appendix:IGW_rotation}.

\textbf{Mean-zero assumption:} As being discussed above, the $\mathsf{IGW}_{\varepsilon}$ and $\mathsf{UIGW}_{\varepsilon,\tau}$ are not invariant to translation, because the $\mathsf{IGW}$ depends on the means of distributions, which is apparently undesirable. To get around this cumbersome, we assume that the means of two distributions are equal to zero, which is equivalent to the shifting step as shown in Section~\ref{section:l2_GW}. Then, the translation invariant property is no longer a concern, and the problem is reduced to working with the covariance structures. By putting the mean-zero assumption, we still follow the main idea of the Gromov-Wasserstein distance, which is comparing the inner structures of two input distributions.  


\section{Entropic Gromov-Wasserstein between balanced Gaussians}
\label{sec:entropic_GW}
We present in this section a closed-form expression of the entropic inner product Gromov-Wasserstein between two Gaussian measures and a formula of a corresponding optimal transport plan in Theorem \ref{theorem:entropic_gw:inner_product:closed_form}. 



Let $\mu=\mathcal{N}(\mathbf{0},\Sigma_\mu)$ and $\nu=\mathcal{N}(\mathbf{0},\Sigma_\nu)$ be two Gaussian measures in $\mathbb{R}^m$ and $\mathbb{R}^n$, respectively. To ease the ensuing presentation, in the entropic $\mathsf{IGW}$ problem, we denote the covariance matrix of the joint distribution $\pi$ of $\mu$ and $\nu$ by
\begin{align}
    \Sigma_{\pi} = \begin{pmatrix}\Sigma_{\mu} & K_{\mu\nu} \\
    K_{\mu\nu}^{\top} & \Sigma_{\nu}
    \end{pmatrix}. \label{eq:plan_structure_balanced}
\end{align}
In addition, the rotation invariant property of entropic IGW in Lemma~\ref{lemma:IGW_rotation} allows us to assume without loss of generality that $\Sigma_{\mu}$ and $\Sigma_{\nu}$ are diagonal matrices with positive diagonal entries sorted in descending order. In particular,
\begin{align}
    \Sigma_{\mu} &= \diag\big(\lambda_{\mu,1},\ldots, \lambda_{\mu,m} \big), \nonumber \\
    \Sigma_{\nu} &= \diag\big(\lambda_{\nu,1},\ldots,\lambda_{\nu,n} \big). \label{eq:covariance_structures}
\end{align}
This step simplifies significantly the unnecessarily complicated form of the general covariance, since the Gromov-Wasserstein depends on the eigenvalues of the covariance matrix.  
The following result gives the closed-form expression for the entropic IGW between two balanced Gaussian distributions.
\begin{theorem}[\textbf{Entropic IGW has a closed form}]
\label{theorem:entropic_gw:inner_product:closed_form}
Suppose without loss of generality that $m\geq n$. Let $\mu=\mathcal{N}(\mathbf{0},\Sigma_\mu)$ and $\nu=\mathcal{N}(\mathbf{0},\Sigma_\nu)$ be two zero-mean Gaussian measures in $\mathbb{R}^m$ and $\mathbb{R}^n$, respectively, where $\Sigma_\mu$ and $\Sigma_\nu$ are diagonal matrices given in equation~\eqref{eq:covariance_structures}. The entropic inner product Gromov-Wasserstein between $\mu$ and $\nu$ then equals to
\begin{align*}
    \mathsf{IGW}_\varepsilon(\mu,\nu) &= \tr(\Sigma_{\mu}^2) +  \tr(\Sigma_{\nu}^2) -  2 \sum_{k=1}^{n} \Big[\lambda_{\mu,k} \lambda_{\nu,k} - \frac{\varepsilon}{4}\Big]^+ \\
    & + \frac{\varepsilon}{2} \sum_{k=1}^{n}\Big[\log(\lambda_{\mu,k} \lambda_{\nu,k}) - \log\Big(\frac{\varepsilon}{4}\Big)  \Big]^+.
\end{align*}
Furthermore, the optimal transportation plan of the entropic IGW problem is a zero-mean Gaussian measure $\pi^*=\mathcal{N}(\mathbf{0},\Sigma_{\pi^*})$, where
\begin{align}\label{eq:IGW_transport_plan}
    \Sigma_{\pi^*} = \begin{pmatrix}
    \Sigma_{\mu} & K^*_{\mu \nu} \\
    (K^*_{\mu \nu})^{\top} & \Sigma_{\nu}
    \end{pmatrix},
\end{align}
with $K^*_{\mu\nu} = \diag\Big( \Big[\lambda_{\mu,k} \lambda_{\nu,k}\big[1- \frac{\varepsilon}{4\lambda_{\mu,k}\lambda_{\nu,k}} \big]^+\Big]^{\frac{1}{2}}\Big)_{k=1}^n$ is an $m\times n$ matrix.
\end{theorem}
\begin{remark}
It can be seen that the quantity $\mathsf{IGW}_\varepsilon(\mu,\nu)$ depends only on the eigenvalues of the covariance matrices of $\mu$ and $\nu$. In addition, as $\varepsilon\to 0$, $\mathsf{IGW}_\varepsilon(\mu,\nu)$ converges to $\sum_{k=1}^n(\lambda_{\mu,k}-\lambda_{\nu,k})^2+\sum_{k=n+1}^m\lambda_{\mu,k}^2$, which is the squared Euclidean distance between covariance matrices $\Sigma_\mu$ and $\Sigma_\nu$. In other words, $\mathsf{IGW}_\varepsilon(\mu,\nu)$ reflects the difference in the inner structures of the two input measures. Regarding the optimal transport plan, when $\Sigma_\mu$ and $\Sigma_\nu$ are not diagonal, the covariance matrix $K^*_{\mu\nu}$ in equation~\eqref{eq:IGW_transport_plan} turns into
\begin{align*}
    K^{*,\text{new}}_{\mu\nu}=P_{\mu}D^{\frac{1}{2}}_{\mu}\Lambda_{\mu\nu}^{\frac{1}{2}}D^{\frac{1}{2}}_{\nu}P^{\top}_{\nu}=P_{\mu}K^*_{\mu\nu}P^{\top}_{\nu}.
\end{align*}
where $\Lambda_{\mu\nu}^{\frac{1}{2}}=\diag\Big( \Big\{ \Big[1 - \frac{\varepsilon}{4\lambda_{\mu,k}\lambda_{\nu,k}} \Big]^+\Big\}^{\frac{1}{2}}\Big)_{k=1}^n$, $P_{\mu},D_{\mu}$ and $P_{\nu},D_{\nu}$ are the orthogonal diagonalizations of $\Sigma_\mu(=P_\mu D_\mu P^{\top}_\mu)$ and $\Sigma_\nu(=P_\nu D_\nu P^{\top}_\nu)$, respectively (see Appendix~\ref{appendix:entropic_gw:inner_product:closed_form} for the derivation).
\end{remark}
\begin{proof}[Proof Sketch of Theorem~\ref{theorem:entropic_gw:inner_product:closed_form}]
The full proof of this theorem is in Appendix~\ref{appendix:entropic_gw:inner_product:closed_form}. First of all, we note  that the objective function depends  only on the covariance matrix of the joint distribution and the KL term. Given a fixed covariance matrix, the entropic term in equation~\eqref{eq:entropic_IGW} forces the optimal transport plan for the entropic IGW problem to be a Gaussian distribution (see Lemma~\ref{lemma:min_KL_Gaussian} below). Subsequently, Lemma~\ref{lemma:entropic_gw:inner_product:closed_form:supporting} (cf. the end of this proof) 
is used to attack the the determinant and the trace involving the covariance matrix $K_{\mu\nu}$ between $X$ and $Y$. Then, the entropic IGW is reduced to a minimization problem of single variables in a particular interval, which has a closed-form solution. 
\end{proof}
\begin{lemma}[\textbf{KL divergence minimum}] \label{lemma:min_KL_Gaussian}
Let $Q_{v}$ be the Gaussian distribution in $\br^d$ with mean $v$ and variance $\Sigma_{v}$. Denote by $\Pi_{u,\Sigma_{u}}$ the family of all probability distributions in $\br^d$ which have mean $u$ and variance $\Sigma_{u}$. Then,
\begin{align*}
   \mathcal{N}\big(u,\Sigma_{u}\big) =\argmin_{P_u\in\Pi_{u,\Sigma_u}} \KL(P_{u}\|Q_{v}).
\end{align*}
\end{lemma}
The proof of this lemma is deferred to Appendix~\ref{appendix:min_KL_Gaussian}
\begin{lemma}[\textbf{Eigenvalues, determinant and trace}]
\label{lemma:entropic_gw:inner_product:closed_form:supporting}
Let $U_{\mu\nu} \Lambda_{\mu\nu}^{\frac{1}{2}} V_{\mu\nu}^{\top}$ be the SVD of  $\Sigma_{\mu}^{-\frac{1}{2}} K_{\mu\nu}\Sigma^{-\frac{1}{2}}_{\nu}$, where  $\Lambda_{\mu\nu}^{\frac{1}{2}} = \diag(\kappa_{\mu\nu,k}^{\frac{1}{2}})_{k=1}^n$ with $\kappa^{\frac{1}{2}}_{\mu\nu,k}$ is the $k$-th largest singular value of $\Sigma_{\mu}^{-\frac{1}{2}} K_{\mu\nu}\Sigma^{-\frac{1}{2}}_{\nu}$. Here, $\Lambda_{\mu\nu}^{\frac{1}{2}}$ is an $m\times n$ matrix, while $U_{\mu\nu}$ and $V_{\mu \nu}$ are unitary matrices of sizes $m\times m$ and $n\times n$, respectively. Then,
\begin{itemize}
\item[(a)] The values of $\kappa_{\mu\nu,1},\ldots,\kappa_{\mu\nu,n}$ lie between $0$ and $1$.
\item[(b)] The determinant of matrix $\Sigma_\pi$ could be computed as $\det(\Sigma_\pi) = \det(\Sigma_{\mu}) \det(\Sigma_{\nu}) \prod_{j=1}^{n} \big(1-\kappa_{\mu\nu,j}\big)$.
\item[(c)]  We have $\tr\big(K_{\mu\nu}^{\top} K_{\mu\nu}\big) \leq \sum_{j=1}^{n} \lambda_{\mu,j} \lambda_{\nu,j} \kappa_{\mu\nu,j}$.
The equality occurs when $U_{\mu\nu}$ and $V_{\mu\nu}$ are identity matrices of sizes $m$ and $n$, respectively.
\end{itemize}
\end{lemma}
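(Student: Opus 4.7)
The plan is to prove the three parts sequentially. Parts (a) and (b) follow directly from the Schur-complement structure of $\Sigma_\pi$, while (c) is the substantive step, which I would handle via Horn's log-majorization inequality for singular values of matrix products.

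For (a), the starting point is that $\Sigma_\pi$, being a covariance matrix, is positive semi-definite. Together with its block form \eqref{eq:plan_structure_balanced}, the Schur-complement criterion gives $\Sigma_\nu - K_{\mu\nu}^\top \Sigma_\mu^{-1} K_{\mu\nu} \succeq 0$; conjugating by $\Sigma_\nu^{-1/2}$ yields $\id_n \succeq \Sigma_\nu^{-1/2} K_{\mu\nu}^\top \Sigma_\mu^{-1} K_{\mu\nu} \Sigma_\nu^{-1/2}$, whose eigenvalues are precisely the $\kappa_{\mu\nu, j}$, forcing them into $[0,1]$ (the lower bound is automatic since that matrix is itself PSD). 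For (b), applying the block-matrix determinant identity and factoring $\Sigma_\nu^{1/2}$ out of the Schur complement gives
\[
\det(\Sigma_\pi) = \det(\Sigma_\mu)\det(\Sigma_\nu)\det\bigl(\id_n - \Sigma_\nu^{-1/2} K_{\mu\nu}^\top \Sigma_\mu^{-1} K_{\mu\nu} \Sigma_\nu^{-1/2}\bigr) = \det(\Sigma_\mu)\det(\Sigma_\nu) \prod_{j=1}^n (1 - \kappa_{\mu\nu, j}),
\]
using part (a) to identify the eigenvalues of the bracketed matrix.

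Part (c) is where the real work lies. The plan is to factor $K_{\mu\nu} = \Sigma_\mu^{1/2} M \Sigma_\nu^{1/2}$, where $M := \Sigma_\mu^{-1/2} K_{\mu\nu} \Sigma_\nu^{-1/2}$ has singular values $\kappa_{\mu\nu, j}^{1/2}$ by the definition of $\kappa_{\mu\nu, j}$. Two applications of Horn's log-majorization inequality for singular values of matrix products then yield
\[
\prod_{j=1}^k \sigma_j(K_{\mu\nu}) \leq \prod_{j=1}^k \lambda_{\mu, j}^{1/2} \kappa_{\mu\nu, j}^{1/2} \lambda_{\nu, j}^{1/2}, \qquad 1 \leq k \leq n,
\]
i.e.\ weak log-majorization of $(\sigma_j^2(K_{\mu\nu}))$ by $(\lambda_{\mu, j} \lambda_{\nu, j} \kappa_{\mu\nu, j})$. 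Since weak log-majorization of non-negative sequences implies weak sub-majorization, setting $k = n$ and using $\tr(K_{\mu\nu}^\top K_{\mu\nu}) = \sum_j \sigma_j^2(K_{\mu\nu})$ delivers the stated trace inequality. The equality claim is verified by direct substitution: with $U_{\mu\nu} = \id_m$ and $V_{\mu\nu} = \id_n$, the factorization collapses $K_{\mu\nu}$ to the pseudo-diagonal $m \times n$ matrix $\Sigma_\mu^{1/2} \Lambda_{\mu\nu}^{1/2} \Sigma_\nu^{1/2}$, whose $(j,j)$ entry for $j \leq n$ is $\sqrt{\lambda_{\mu, j} \lambda_{\nu, j} \kappa_{\mu\nu, j}}$ and whose other entries vanish, giving $\tr(K_{\mu\nu}^\top K_{\mu\nu}) = \sum_j \lambda_{\mu, j} \lambda_{\nu, j} \kappa_{\mu\nu, j}$ on the nose.

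The expected main obstacle is the log-majorization step in (c); parts (a) and (b) are purely algebraic bookkeeping. An alternative route for (c) starts from $\tr(K_{\mu\nu}^\top K_{\mu\nu}) = \tr(\Sigma_\nu M^\top \Sigma_\mu M)$ and tries to apply von Neumann's trace inequality directly, but that forces one to track how the eigenvalues of products such as $M \Sigma_\nu M^\top$ interleave with those of $\Sigma_\mu$, whereas the Horn/log-majorization route absorbs all such bookkeeping into one clean inequality and also makes the equality case transparent.
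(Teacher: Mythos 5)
Your proposal is correct. Parts (a) and (b) coincide with the paper's argument: the Schur-complement positivity condition $\Sigma_\nu \succeq K_{\mu\nu}^\top \Sigma_\mu^{-1} K_{\mu\nu}$ forces the $\kappa_{\mu\nu,j}$ into $[0,1]$, and the block-determinant identity gives (b); your version is if anything slightly cleaner, since you read the $\kappa_{\mu\nu,j}$ off as eigenvalues of $\Sigma_\nu^{-1/2} K_{\mu\nu}^\top \Sigma_\mu^{-1} K_{\mu\nu} \Sigma_\nu^{-1/2}$ directly rather than passing through the conjugation $V_{\mu\nu}\Lambda_{\mu\nu}V_{\mu\nu}^\top$. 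Part (c) is where you genuinely diverge. The paper expands $\tr(K_{\mu\nu}^\top K_{\mu\nu})$ as the trace of a four-factor product $\bigl[U_{\mu\nu}\Lambda_{\mu\nu}^{1/2}\bigr]\bigl[V_{\mu\nu}^\top\Sigma_\mu\bigr]\bigl[V_{\mu\nu}\Lambda_{\mu\nu}^{1/2}\bigr]\bigl[U_{\mu\nu}^\top\Sigma_\nu\bigr]$ and invokes the multi-factor generalization of von Neumann's trace inequality, $|\tr(A_1\cdots A_k)| \le \sum_j \prod_i \sigma_j(A_i)$, in one shot. You instead bound the singular values of $K_{\mu\nu} = \Sigma_\mu^{1/2} M \Sigma_\nu^{1/2}$ themselves via Horn's log-majorization, then pass from weak log-majorization to weak majorization of the squares and sum. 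The two routes are close relatives (the generalized von Neumann inequality is itself usually derived from Horn's inequality), but yours has two concrete advantages: it never needs to track how eigenvalues of intermediate products interleave, and the equality case is verified by explicit substitution of $U_{\mu\nu}=\id_m$, $V_{\mu\nu}=\id_n$ rather than by appeal to the (unstated) equality conditions of the trace inequality. The price is the extra lemma that $\prec_{w\log}$ implies $\prec_w$ for non-negative decreasing sequences, which you correctly flag as standard; just note that the orderings required by Horn's inequality are consistent here because $\lambda_{\mu,j}$, $\lambda_{\nu,j}$, $\kappa_{\mu\nu,j}$ are all indexed in decreasing order by the paper's conventions.
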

The proof of Lemma~\ref{lemma:entropic_gw:inner_product:closed_form:supporting} can be found in Appendix~\ref{appendix:entropic_gw:inner_product:closed_form:supporting}. 

 Part (a) of Lemma \ref{lemma:entropic_gw:inner_product:closed_form:supporting} shows a representation of the covariance matrix $K_{\mu\nu}$ through its singular values, which is useful for calculating its determinant in part (b). 
Our key technique to derive the closed-form expression for the entropic IGW lies in part (c) of Lemma \ref{lemma:entropic_gw:inner_product:closed_form:supporting}. In that part, we utilize the von Neumann's trace inequality which says that the sum of singular values of the product of two matrices is maximized when their SVDs share the same orthogonal bases and their corresponding singular values match that of each other by their magnitudes.
In comparison with previous works such as ~\citep{janati2020entropic} and ~\cite{Salmona2021gromov}, we have a direct attack to the problem by figuring out a detailed formula for the covariance matrix and utilising the von Neumann's inequality to avoid solving a complicated system of equations of the first derivatives of the objective function.


\section{Entropic Unbalanced Gromov-Wasserstein between unbalanced Gaussians}
\label{sec:unbalanced_GW}
In this section, we consider a more general setting when the two distributions $\mu$ and $\nu$ are unbalanced Gaussian measures in $\br^m$ and $\br^n$, respectively. Specifically,
\begin{align}
    \mu&=m_\mu\mathcal{N}(\mathbf{0}_m,\Sigma_\mu), \nonumber \\
    \nu&=m_\nu\mathcal{N}(\mathbf{0}_n,\Sigma_\nu), \label{eq:unbalanced_Gauss}
\end{align}
where $m\geq n$; $m_\mu,m_\nu>0$ are their masses and $\Sigma_\mu$, $\Sigma_\nu$ are given in equation~\eqref{eq:covariance_structures}. Here, $\mu$ and $\nu$ do not necessarily have the same mass, i.e., $m_\mu$ could be different from $m_\nu$. For any positive measure $\alpha$, we denote by $\overline{\alpha}$ the normalized measure of $\alpha$. Thus, $\overline{\mu} = \mathcal{N}(\mathbf{0}_m,\Sigma_{\mu})$ and $\overline{\nu} = \mathcal{N}(\mathbf{0}_n,\Sigma_{\nu}) $.

Under the setting of entropic unbalanced IGW, we denote the covariance matrix of a transportation plan $\pi$ by
\begin{align}
\Sigma_{\pi} = \begin{pmatrix}
\Sigma_x & K_{xy}\\
K_{xy}^{\top} & \Sigma_y
\end{pmatrix}, \label{eq:transport_plan_unbalanced}
\end{align}
where $\Sigma_x$ and $\Sigma_y$ are covariance matrices of distributions $\pi_x$ and $\pi_y$, respectively. 

Note that the objective function of entropic unbalanced IGW in equation~\eqref{eq:UIGW_formulation} involves two new KL divergence terms compare to that of the entropic IGW problem. Thus, before presenting the the main theorem of this section, we introduce a lower bound for the KL divergence between two Gaussian distributions in the following lemma.
 \begin{lemma}[\textbf{Lower bound for KL divergence}]\label{lemma:lower_bound_KL}
 Let $\pi$ be given in equation~\eqref{eq:transport_plan_unbalanced}, $\lambda_x = (\lambda_{x,i})_{i=1}^m$ be the eigenvalues of $\Sigma_x$, and $\lambda_y = (\lambda_{y,j})_{j=1}^n$ be the eigenvalues of $\Sigma_y$. Similarly, we define $\lambda_{\mu} $ for $\Sigma_{\mu}$ and $\lambda_{\nu}$ for $\Sigma_{\nu}$. Then, we find that
\begin{itemize}
    \item[(a)]  $\KL(\overline{\pi}_x\|\overline{\mu}) \geq \frac{1}{2} \sum_{i=1}^m \Psi\big(\lambda_{x,i} \lambda_{\mu,i}^{-1}\big)$;
    \item[(b)] $\KL(\overline{\pi}_y\|\overline{\nu}) \geq \frac{1}{2} \sum_{j=1}^n \Psi\big(\lambda_{y,j}\lambda_{\nu,j}^{-1}\big)$; 
    \item[(c)]
    $\KL(\overline{\pi}\|\overline{\mu}\otimes \overline{\nu}) =  \KL(\overline{\pi}_x\|\overline{\mu}) +\KL(\overline{\pi}_y\|\overline{\nu})\\
    \textbf{}\qquad\qquad\textbf{} \qquad+\frac{1}{2}\sum_{k=1}^n \log(1- \kappa_{xy,k})$,
\end{itemize}
where $\Psi(x) = x - \log(x) - 1$, and $(\kappa_{xy,k}^{\frac{1}{2}})_{k=1}^n$ are singular values of matrix $\Sigma_{x}^{-\frac{1}{2}} K_{xy} \Sigma_y^{-\frac{1}{2}}$.
 \end{lemma}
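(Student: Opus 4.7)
My plan is to deduce all three statements by expanding the relevant KL divergences through Lemma~\ref{lemma:kl_divergence} and then invoking two standard tools: the rearrangement (lower-bound) form of von Neumann's trace inequality for parts (a) and (b), and a Schur-complement determinant identity together with a block-diagonal trace split for part (c).

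For parts (a) and (b), I would first apply Lemma~\ref{lemma:kl_divergence} to obtain
\[
2\,\KL(\overline{\pi}_x\|\overline{\mu}) \;=\; \tr(\Sigma_x\Sigma_\mu^{-1}) - m + \log\!\frac{\det\Sigma_\mu}{\det\Sigma_x}.
\]
Expanding the target lower bound, I would observe that
\[
\sum_{i=1}^m \Psi\bigl(\lambda_{x,i}\lambda_{\mu,i}^{-1}\bigr) \;=\; \sum_{i=1}^m \lambda_{x,i}\lambda_{\mu,i}^{-1} - m + \log\!\frac{\det\Sigma_\mu}{\det\Sigma_x},
\]
since $\sum_i\log\lambda_{x,i}=\log\det\Sigma_x$ and likewise for $\Sigma_\mu$. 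The claim therefore reduces to the single inequality $\tr(\Sigma_x\Sigma_\mu^{-1}) \geq \sum_{i=1}^m \lambda_{x,i}\lambda_{\mu,i}^{-1}$, which is exactly the lower-bound form of the von Neumann trace inequality: for symmetric positive-definite matrices $A,B$, one has $\tr(AB) \geq \sum_i \lambda_i^{\downarrow}(A)\lambda_i^{\uparrow}(B)$. Because eigenvalues of $\Sigma_\mu$ are indexed in decreasing order, those of $\Sigma_\mu^{-1}$ are indexed in increasing order under the same convention, and the right-hand side of the inequality matches the stated bound precisely. Part (b) would then follow by an identical argument with $(\Sigma_y,\Sigma_\nu)$ replacing $(\Sigma_x,\Sigma_\mu)$.

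For part (c), I would invoke Lemma~\ref{lemma:kl_divergence} once more, now applied to $\overline{\pi}$ and $\overline{\mu}\otimes\overline{\nu}$ whose covariance $\Sigma_{\mu\otimes\nu}=\diag(\Sigma_\mu,\Sigma_\nu)$ is block-diagonal. Since $\Sigma_{\mu\otimes\nu}^{-1}=\diag(\Sigma_\mu^{-1},\Sigma_\nu^{-1})$, the trace term separates cleanly as $\tr(\Sigma_x\Sigma_\mu^{-1}) + \tr(\Sigma_y\Sigma_\nu^{-1})$. For the determinant, I would reuse the Schur-complement computation behind Lemma~\ref{lemma:entropic_gw:inner_product:closed_form:supporting}(b), now with $(\Sigma_x,\Sigma_y,K_{xy})$ in place of $(\Sigma_\mu,\Sigma_\nu,K_{\mu\nu})$, to obtain $\det\Sigma_\pi=\det\Sigma_x\,\det\Sigma_y\,\prod_{k=1}^n(1-\kappa_{xy,k})$. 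Substituting these two identities into the KL expansion, the $(m+n)$ constant and the log-determinants regroup exactly as $2\,\KL(\overline{\pi}_x\|\overline{\mu}) + 2\,\KL(\overline{\pi}_y\|\overline{\nu})$, leaving the residual $-\sum_k\log(1-\kappa_{xy,k})$. Dividing by two yields the claimed identity.

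I do not expect any serious obstacle: the argument is essentially bookkeeping once the correct tools are identified. The only subtle point is the direction of von Neumann's inequality invoked for parts (a) and (b): whereas Lemma~\ref{lemma:entropic_gw:inner_product:closed_form:supporting}(c) relies on the familiar \emph{upper} bound, here one needs the complementary rearrangement \emph{lower} bound, and index-order bookkeeping must be handled carefully because the inversion $\Sigma_\mu \mapsto \Sigma_\mu^{-1}$ reverses the sorting order of the eigenvalues.
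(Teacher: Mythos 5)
Your proposal is correct and follows essentially the same route as the paper: expand the Gaussian KL formula, reduce (a) and (b) to the trace lower bound $\tr(\Sigma_x\Sigma_\mu^{-1})\geq\sum_{i}\lambda_{x,i}\lambda_{\mu,i}^{-1}$ via the rearrangement (lower-bound) form of von Neumann's inequality, and obtain (c) from the block-diagonal trace split together with the Schur-complement determinant identity of Lemma~\ref{lemma:entropic_gw:inner_product:closed_form:supporting}(b). If anything, you are more explicit than the paper about the direction of the trace inequality and the eigenvalue-ordering bookkeeping after inverting $\Sigma_\mu$, points the paper's proof glosses over.
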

The proof of Lemma~\ref{lemma:lower_bound_KL} is in Appendix~\ref{appendix:lower_bound_KL}. 
Next, we define some functions and quantities that will be used in our analysis. Given $\tau,\varepsilon>0$, let
\begin{align*}
 &g_{\varepsilon,\tau,+}(x,y;a,b):= x^2 + y^2+ \varepsilon \left[\log(xy)- \log \frac{\varepsilon}{2} \right]^+  \\
 &\quad + (\tau+ \varepsilon) \left[ \frac{x}{a}+ \frac{y}{b} - \log\left(\frac{xy}{ab}\right)-2 \right]- 2 \left[xy - \frac{\varepsilon}{2} \right]^+; \\
 &h_{\varepsilon,\tau}(x;a):= x^2 + (\tau+\varepsilon) \left[ \frac{x}{a}-\log\left(\frac{x}{a}\right)-1\right]; \\
 &(\varphi_k,\phi_k):= \argmin_{x,y>0}~g_{\varepsilon,\tau,+}(x,y;\lambda_{\mu,k}\lambda_{\nu,k}),~ k \in[n]; \\
 &\varphi_k:= \argmin_{x>0}~h_{\varepsilon,\tau}(x,\lambda_{\mu,k}),~ k=n+1,\ldots,m;
\end{align*}
for any $x,y,a,b>0$. The detailed calculation of $(\varphi_k)_{k=1}^m$ and $(\phi_k)_{k=1}^n$ can be found in Appendix~\ref{sec:upsilon_calculation}.

Now, we are ready to state our main result regarding the entropic unbalanced IGW between two unbalanced Gaussian distributions.
\begin{theorem}[\textbf{Entropic unbalanced IGW has a closed form}]
 \label{theorem:UGW}
 Let $\mu$ and $\nu$ be two unbalanced Gaussian distributions given in equation~\eqref{eq:unbalanced_Gauss}. Then, the entropic unbalanced IGW between $\mu$ and $\nu$ can be computed as follows:
 \begin{align}\label{eq:UIGW_closed-form}
     \mathsf{UIGW}_{\varepsilon,\tau}&(\mu,\nu) = m_{\pi^*}^2 \Upsilon^{*}+ \varepsilon\KL(m_{\pi^*}^2\|m_{\mu}^2 m_{\nu}^2)\nonumber \\
     &+ \tau \Big\{\KL(m_{\pi^*}^2\|m_{\mu}^2) + \KL(m_{\pi^*}^2\|m_{\nu}^2) \Big\},
 \end{align}
 where 
 \begin{align*}
     m_{\pi^*} &:= (m_{\mu}m_{\nu})^{\frac{\tau+\varepsilon}{2\tau + \varepsilon}}\exp\Big\{\frac{-\Upsilon^{*}}{2(2\tau + \varepsilon)} \Big\},
     \\
     \Upsilon^{*} &:= \sum_{k=1}^n g_{\varepsilon,\tau,+}(\varphi_k,\phi_k; \lambda_{\mu,k},\lambda_{\nu,k}) \\
     & \hspace{8 em} + \sum_{k=n+1}^m h_{\varepsilon,\tau}(\varphi_k;\lambda_{\mu,k}).
 \end{align*}
Furthermore, the optimal transportation plan is an unbalanced Gaussian measure $\pi^* = m_{\pi^*} \mathcal{N}(\zeros,\Sigma_{\pi^*})$, where 
 \begin{align*}
     \Sigma_{\pi^*} = \begin{pmatrix} \Sigma_x^{*} &K_{xy}^{*}\\
     (K_{xy}^{*})^{\top} &\Sigma_y^{*}
     \end{pmatrix},
\end{align*}
 in which $\Sigma_x^{*} = \diag\big(\varphi_k \big)_{k=1}^m$, $\Sigma_y^{*} = \diag\big(\phi_k \big)_{k=1}^n$, and $K_{xy}^{*} = \diag\big(\psi_k \big)_{k=1}^n$ is an $m\times n$ matrix with  $\psi_k := \Big\{\big[1 - \frac{\varepsilon}{2\varphi_k \phi_k}\big]^+ \varphi_k \phi_k\Big\}^{\frac{1}{2}}$ for all $k\in[n]$.
\end{theorem}
\begin{remark}
Similar to $\mathsf{IGW}_\varepsilon(\mu,\nu)$, equation~\eqref{eq:UIGW_closed-form} shows that $\mathsf{UIGW}_\varepsilon(\mu,\nu)$ still depends on the eigenvalues of the covariance matrices of $\mu$ and $\nu$ via the quantity $\Upsilon^*$, the detailed calculation of which is deferred to Appendix~\ref{sec:upsilon_calculation}. In addition, as $\mu$ and $\nu$ are assumed to be unbalanced in this case, the masses of these two measures are also included in the closed-form of $\mathsf{UIGW}_\varepsilon(\mu,\nu)$. 
\end{remark}
\begin{proof}[Proof Sketch of Theorem~\ref{theorem:UGW}]
The full proof of this theorem can be found in Appendix~\ref{appendix:UGW}. 
Recall that feasible transport plans $\pi$ in the problem \eqref{eq:UIGW_formulation} are not necessarily probability measures. Therefore, we aim to optimize over the shape $\bar{\pi}$ and the masses $m_\pi$ of $\pi$. To do so, we  separate those two factors in the objective function \eqref{eq:UIGW_formulation} as follows:
\begin{align}\label{eq:resulting_obj_function}
    m_{\pi}^2 \Upsilon + \varepsilon&\KL(m_{\pi}^2\|m_{\mu}^2 m_{\nu}^2)\nonumber\\
    &+ \tau \Big\{\KL(m_{\pi}^2\|m_{\mu}^2) + \KL(m_{\pi}^2\|m_{\nu}^2) \Big\},
\end{align}
where
\begin{align}\label{eq:shape_term}
    \Upsilon&:= \E_{\bar{\pi}\otimes\bar{\pi}}\big[ \langle X,X^{\prime}\rangle - \langle Y,Y^{\prime}\rangle \big]^2 + 2\varepsilon\KL(\bar{\pi}\|\bar{\mu}\otimes\bar{\nu})\nonumber\\
    &+2(\varepsilon + \tau) \Big\{ \KL(\bar{\pi}_x \| \bar{\mu}) + \KL(\bar{\pi}_y \| \bar{\nu}) \Big\}.
\end{align}
\paragraph{Shape optimization.} In a similar fashion to the proof of Theorem~\ref{theorem:entropic_gw:inner_product:closed_form}, we show that the sum of the expectation and entropic terms in equation~\eqref{eq:shape_term} depend only the covariance matrix $\Sigma_\pi$ of $\pi$ as below:
\begin{align*}
    \|\lambda_x\|_2^2 + \|\lambda_y\|_2^2 - &2\sum_{k=1}^n \Big(\lambda_{x,k}\lambda_{y,k} - \frac{\varepsilon}{2}\Big)^+\\
    &+ \varepsilon \sum_{k=1}^n   \Big[\log(\lambda_{x,k}\lambda_{y,k}) - \log\frac{\varepsilon}{2} \Big]^+,
\end{align*}
where $\lambda_x,\lambda_y,\lambda_\mu$ and $\lambda_\nu$ are defined as in Lemma~\ref{lemma:lower_bound_KL}.
Putting this result together with parts (a) and (b) of Lemma~\ref{lemma:lower_bound_KL}, the problem \eqref{eq:shape_term} is reduced to
\begin{align*}
    \min_{\lambda_x,\lambda_y>0}\Big\{\sum_{k=1}^n g_{\varepsilon,\tau, +}&(\lambda_{x,k},\lambda_{y,k}; \lambda_{\mu,k},\lambda_{\nu,k})\\
    &+\sum_{k=n+1}^m h_{\varepsilon,\tau}(\lambda_{x,k};\lambda_{\mu,k})\Big\}.
\end{align*}
Finally, Lemma~\ref{lemma:order_eigenvalues} (in Appendix~\ref{sec:auxiliary_results}) allows us to optimize each summation term in the above problem independently but still preserving the decreasing order of eigenvalue sequences $(\lambda_{x,k})_{k=1}^n$. Eventually, we obtain the optimal value $\Upsilon^*$, which will be calculated in detail in Appendix~\ref{sec:upsilon_calculation}.
\paragraph{Mass optimization.} Based on the equation \eqref{eq:resulting_obj_function}, the optimal mass $m_{\pi^*}$ is the square root of the minimizer of the function
\begin{align*}
    f(x):=x\Upsilon^*+ &\varepsilon\KL(x\|m_{\mu}^2 m_{\nu}^2)\\
    &+ \tau \Big\{\KL(x\|m_{\mu}^2) + \KL(x\|m_{\nu}^2) \Big\},
\end{align*}
which is obtained by solving the equation $f^\prime(x)=0$.
\end{proof}
\section{(Entropic) Gromov-Wasserstein Barycenter of Gaussian Distributions}
\label{sec:barycenter_GW}
In this section, we consider the problem of computing the (entropic) Gromov-Wasserstein barycenter of $T$ Gaussian measures $\mu_1,\ldots,\mu_T$ defined in spaces of various dimensions. To tackle this problem, we need to fix the desired dimension of a barycenter, e.g, $d$, and choose beforehand the positive weighting coefficients $\alpha_1,\ldots,\alpha_T$ such that $\sum_{\ell=1}^T \alpha_{\ell}=1$ associated with the measures $\mu_1,\ldots,\mu_T$. Here we allow the dimension of the barycenter to be flexible, since the given Gaussian measures could be in different dimensional spaces. Then, the barycenter of $T$ positive measures under the inner-product Gromov-Wasserstein distance is defined as follows:
\begin{definition}[\textbf{Inner product Gromov-Wasserstein barycenter}]
\label{def:IGW_barycenter}
Let $\mu_{\ell}$ be a positive measure in $\mathbb{R}^{m_{\ell}}$ for any $\ell\in [T]$. Let $\alpha_1,\ldots,\alpha_T$ be positive constants such that $\sum_{\ell=1}^{T} \alpha_{\ell} = 1$. The inner product Gromov-Wasserstein (IGW) barycenter of $\{\mu_{\ell},\alpha_{\ell}\}_{\ell=1}^{T}$ is defined as the probability distribution of the random vector $Y$ in $\mathbb{R}^d$ which is a solution of the following problem
\begin{align}
   \argmin_{\substack{X_{\ell}\sim \mu_{\ell};~\dim(Y) = d ;\\ (X_{\ell},Y),(X_{\ell}^{\prime},Y)\sim \pi_{\ell,y}}} \sum_{\ell=1}^T \alpha_{\ell} \mathbb{E}_{\pi_{\ell,y}\otimes\pi_{\ell,y}} \Big\{  \big[\langle X_{\ell},X_{\ell}^{\prime} \rangle - \langle Y, Y^{\prime} \rangle \big]^2 \Big\}, \label{definition:barycenter}
\end{align}
where in the above minimum, $(X_{\ell},Y)$ and $(X_{\ell}^{\prime},Y^{\prime})$ are i.i.d random vectors for each $\ell=1,\ldots, T$. 
\end{definition}
Based on this definition, the following theorem provides a closed-form expression for the IGW barycenter of $T$ Gaussian measures.

\begin{theorem}[\textbf{Inner product Gromov-Wasserstein barycenter has a closed form}]
\label{theorem:barycenter}
Let $\mu_{\ell}= \mathcal{N}(\mathbf{0}, \Sigma_{\ell})$ be Gaussian distribution in $\mathbb{R}^{m_\ell}$ for all $\ell\in[T]$, where $\Sigma_{\ell}=\diag\big(\lambda_{\ell,i} \big)_{i=1}^{m_\ell}$ is an $m_{\ell}\times m_{\ell}$ positive definite matrix.
Let $d$ be a positive integer and assume that $d\leq \max_{\ell\in[T]} m_{\ell}$, the IGW barycenter of the formulation \eqref{definition:barycenter} has the form: $\mu^* = \mathcal{N}(\mathbf{0},\Sigma_{\mu^*})$, where
\begin{align*}
    \Sigma_{\mu^*} = \diag\big(\lambda_{\mu^*,j} \big)_{j=1}^d,
\end{align*}
in which $\lambda_{\mu^*,j} = \sum_{\ell=1}^{T} \alpha_{\ell} \lambda_{\ell,j} \mathbf{1}_{j\leq m_{\ell}}$ for all $j\in[d]$.
\end{theorem}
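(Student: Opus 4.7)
The plan is to reduce the barycenter problem to a finite-dimensional optimization over the covariance of a centered Gaussian $Y$ by exploiting that the inner-product cost only sees second moments of the joint law, and then to reuse the machinery from the proof of Theorem~\ref{theorem:entropic_gw:inner_product:closed_form} (specialized to $\varepsilon=0$). First, expanding the square exactly as in the derivation of~\eqref{eq:first_equality_balanced} and using that $(X_\ell, Y)$ and $(X_\ell', Y')$ are i.i.d.\ with $X_\ell$ centered, I would establish
\begin{align*}
\mathbb{E}_{\pi_{\ell,y}}\Bigl[\bigl(\langle X_\ell, X_\ell'\rangle - \langle Y, Y'\rangle\bigr)^{2}\Bigr] = \tr(\Sigma_\ell^{2}) - 2\tr\bigl(K_{X_\ell Y}^{\top} K_{X_\ell Y}\bigr) + \tr\bigl((\mathbb{E}[YY^{\top}])^{2}\bigr),
\end{align*}
so the objective depends on $(Y, \{\pi_{\ell,y}\})$ only through $\mathbb{E}[YY^\top]$ and the cross-covariances $K_{X_\ell Y}$.

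Next I would argue that one may restrict to a centered Gaussian $Y$ together with jointly Gaussian couplings. Writing $m := \mathbb{E}[Y]$, the identity $\tr((\Sigma_Y + mm^\top)^{2}) \geq \tr(\Sigma_Y^{2})$ (with equality iff $m = 0$) together with the translation-invariance of each $K_{X_\ell Y}$ shows that centering $Y$ can only decrease the objective. Then I replace $Y$ by a centered Gaussian with covariance $\Sigma_Y$ and each $\pi_{\ell,y}$ by the Gaussian coupling having the same cross-covariance $K_{X_\ell Y}$; this leaves the objective unchanged and produces a feasible Gaussian configuration. Finally, rotating $Y$, I may assume $\Sigma_Y = \diag(\lambda_{Y,1}, \ldots, \lambda_{Y,d})$ with $\lambda_{Y,1} \geq \cdots \geq \lambda_{Y,d} \geq 0$, since the objective is invariant under orthogonal changes of coordinates on the $Y$-side.

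Third, I would apply Lemma~\ref{lemma:entropic_gw:inner_product:closed_form:supporting}(c) (the singular-value form of von Neumann's trace inequality) to each cross term to obtain
\begin{align*}
\tr\bigl(K_{X_\ell Y}^{\top} K_{X_\ell Y}\bigr) \leq \sum_{k=1}^{\min(m_\ell, d)} \lambda_{\ell, k}\,\lambda_{Y, k},
\end{align*}
with equality attained by the diagonal choice $(K_{X_\ell Y})_{kk} = (\lambda_{\ell, k}\lambda_{Y, k})^{1/2}$ for $k \leq \min(m_\ell, d)$ and zero elsewhere (a direct Schur-complement computation yields a non-negative diagonal, so the block covariance is PSD and the Gaussian coupling is valid). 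Substituting and dropping the constant $\sum_\ell \alpha_\ell \tr(\Sigma_\ell^2)$, the problem reduces to the separable minimization
\begin{align*}
\min_{\lambda_{Y,k} \geq 0}\ \sum_{k=1}^d \Bigl( \lambda_{Y,k}^2 - 2\,\lambda_{Y,k} \sum_{\ell=1}^n \alpha_\ell\,\lambda_{\ell,k}\,\mathbf{1}_{k \leq m_\ell} \Bigr),
\end{align*}
whose coordinate-wise solution is $\lambda_{Y,k}^* = \sum_\ell \alpha_\ell \lambda_{\ell,k}\mathbf{1}_{k \leq m_\ell}$, exactly the claimed $\lambda_{\mu^*, k}$. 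Since each $\Sigma_\ell$ has decreasing diagonal and indicators can only drop as $k$ grows, this sequence is automatically decreasing, so the presumed ordering of $\Sigma_Y$ is consistent and the argument closes.

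The main obstacle is the reduction in the second step: carefully justifying that passing to a Gaussian $Y$ with Gaussian couplings comes at no cost. The essential observation making this possible is that the quadratic inner-product cost depends on the joint law only through second moments, exactly as in the proof of Theorem~\ref{theorem:entropic_gw:inner_product:closed_form}. Once that reduction is secured, everything else is the coordinate-wise scalar optimization carried out above together with one invocation of Lemma~\ref{lemma:entropic_gw:inner_product:closed_form:supporting}(c).
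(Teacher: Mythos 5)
Your proposal is correct and follows essentially the same route as the paper: expand the quadratic cost into traces of second moments, apply Lemma~\ref{lemma:entropic_gw:inner_product:closed_form:supporting}(c) to bound each cross term by $\sum_{k}\lambda_{\ell,k}\lambda_{y,k}$, and solve the resulting separable quadratic in the $\lambda_{y,j}$. Your second step (explicitly justifying the restriction to a centered Gaussian $Y$ with Gaussian couplings, and checking that the optimal $\lambda_{Y,k}^*$ are automatically decreasing so the ordering required by von Neumann is consistent) is handled only implicitly in the paper via the explicit comonotone construction $Y_j=\sqrt{\lambda_{y,j}}Z_j$, so your write-up is, if anything, slightly more careful on that point.
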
 
The proof of Theorem~\ref{theorem:barycenter} can be found in Appendix~\ref{appendix:barycenter}. Subsequently, we define the formulation of the barycenter of positive measures using the entropic IGW.
\begin{definition}[\textbf{Entropic inner product Gromov-Wasserstein barycenter}]
Let $\mu_{\ell}$ be a positive measure in $\mathbb{R}^{m_{\ell}}$ for any $\ell\in [T]$. Let $\alpha_1,\ldots,\alpha_T$ be positive constants such that $\sum_{\ell=1}^{T} \alpha_{\ell} = 1$. The entropic inner-product Gromov-Wasserstein barycenter of $\{\mu_{\ell},\alpha_{\ell}\}_{\ell=1}^{T}$ is defined as the probability distribution of the random vector $Y$ in $\mathbb{R}^d$ which is a solution of the following problem
\begin{align}
\argmin_{\substack{X_{\ell}\sim \mu_{\ell};Y \in \RR^d; Y\sim \mu;\\ (X_{\ell},Y),(X_{\ell}^{\prime},Y)\sim \pi_{\ell,y}}} &\sum_{\ell=1}^{T} \alpha_{\ell} \Big\{ \mathbb{E}_{\pi_{\ell,y}\otimes\pi_{\ell,y}}  \big[\langle X_{\ell},X_{\ell}^{\prime} \rangle - \langle Y, Y^{\prime} \rangle \big]^2\nonumber\\
   &+ \varepsilon \KL(\pi_{\ell,y} \| \mu_{\ell}\otimes \mu)\Big\}, \label{definition:entropic_barycenter}
\end{align}
where in the above minimum, $\{X_{\ell}^{\prime}\}$ and $\{X_{\ell}\}$ are i.i.d. random vectors and $Y$ and $Y^{\prime}$ are i.i.d. random vectors.
\end{definition}
From that definition, we have the following result for the entropic IGW barycenter of $T$ Gaussian distributions. 
\begin{theorem}[\textbf{Entropic inner product Gromov-Wasserstein barycenter has a closed form}]
\label{theorem:entropic_barycenter}
Let $\mu_{\ell}= \mathcal{N}(\mathbf{0}, \Sigma_{\ell})$ be Gaussian distribution in $\mathbb{R}^{m_\ell}$ for all $\ell\in[T]$, where $\Sigma_{\ell}=\diag\big(\lambda_{\ell,i} \big)_{i=1}^{m_\ell}$ is an $m_{\ell}\times m_{\ell}$ positive definite matrix.
Let $d$ be a positive integer such that $d\leq \max_{\ell\in[T]} m_{\ell}$. Define $d_{\ell} = \min \{d,m_{\ell} \}$. Let $\varepsilon$ be a positive constant satisfying 
\begin{align}
    \varepsilon \leq 2 \lambda_{\ell,j} \big\{A_j + (A_j^2 - \varepsilon B_j)^{\frac{1}{2}} \big\}, \label{condition:varepsilon_barycenter}
\end{align}
and $A_j^2 \geq \varepsilon B_j$ for any $\ell\in[T]$ and $j\in[d]$, where $A_j = \sum_{\ell=1}^{T} \alpha_{\ell} \lambda_{\ell,j} \mathbf{1}_{j\leq d_{\ell}}$ and $B_j = \sum_{\ell=1}^{T} \alpha_{\ell} \mathbf{1}_{j\leq d_{\ell}}$. Then, the barycenter of the formulation \eqref{definition:entropic_barycenter} admits a Gaussian solution which has the form: $\mu^* = \mathcal{N}(\mathbf{0},\Sigma_{\mu^*})$, in which $\Sigma_{\mu^*}=\diag(\lambda_{\mu^*,j})_{j=1}^d$ with  
\begin{align*}
    \lambda_{\mu^*,j} = \sum_{\ell=1}^{T} \alpha_{\ell} \lambda_{\ell,j} \mathbf{1}_{j\leq d_{\ell}} \left\{1 - \frac{\varepsilon}{\lambda_{\ell,j} \big[A_j + (A_j^2 - \varepsilon B_j)^{\frac{1}{2}} \big]} \right\}.
\end{align*}
\end{theorem}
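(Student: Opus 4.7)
The plan is to follow the same blueprint as Theorems~\ref{theorem:entropic_gw:inner_product:closed_form} and~\ref{theorem:barycenter}: reduce to a parametric family of Gaussian barycenters with diagonal covariance, substitute the closed-form entropic IGW distance, and minimize coordinate-by-coordinate. Specifically, I would first argue that the optimum may be taken in the form $\mu^* = \mathcal{N}(\mathbf{0}, \Sigma_{\mu^*})$ with $\Sigma_{\mu^*} = \diag(\lambda_{\mu^*, j})_{j=1}^d$. The reasoning mirrors the proof of Theorem~\ref{theorem:entropic_gw:inner_product:closed_form}: the cost $\mathbb{E}_{\pi_{\ell, y}}\{[\langle X_\ell, X_\ell'\rangle - \langle Y, Y'\rangle]^2\}$ depends on $\pi_{\ell, y}$ only through its second moments, each KL term is minimized (for a fixed covariance) by a Gaussian, and the von Neumann singular-value inequality of Lemma~\ref{lemma:entropic_gw:inner_product:closed_form:supporting}(c) combined with the rearrangement argument of Lemma~\ref{lemma:order_eigenvalues} forces $\Sigma_{\mu^*}$ to share eigenvectors with each $\Sigma_\ell$, and hence to be diagonal in the fixed basis in which all $\Sigma_\ell$ are.

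Given this reduction, the inner minimum over $\pi_{\ell, y}$ is exactly $\mathsf{IGW}_\varepsilon(\mu_\ell, \mathcal{N}(\mathbf{0}, \Sigma_{\mu^*}))$, and Theorem~\ref{theorem:entropic_gw:inner_product:closed_form} supplies its closed form with sums running over $k = 1, \ldots, d_\ell$. Multiplying by $\alpha_\ell$ and summing in $\ell$ decouples the objective across coordinates $j$: the $\tr(\Sigma_{\mu^*}^2)$ contributions collapse (since $\sum_\ell \alpha_\ell = 1$) to a single $\lambda_{\mu^*, j}^2$ per $j$, and the linear- and log-couplings aggregate, via the indicator $\mathbf{1}_{j \leq d_\ell}$, into $-2A_j \lambda_{\mu^*, j}$ and $(\varepsilon B_j/2)\log(\lambda_{\mu^*, j})$ respectively. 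Hence for each $j$ with $B_j > 0$ the problem reduces to minimizing a scalar function of the form
\begin{align*}
\lambda^2 - 2 A_j \lambda + \frac{\varepsilon B_j}{2}\log \lambda
\end{align*}
up to a constant independent of $\lambda$.

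Under the hypothesis $\varepsilon \leq 2\lambda_{\ell, j}[A_j + \sqrt{A_j^2 - \varepsilon B_j}]$, every positive part in the IGW closed form is active at the candidate optimum, so the per-coordinate problem is smooth. Setting its derivative to zero produces a quadratic in $\lambda_{\mu^*, j}$ with discriminant proportional to $A_j^2 - \varepsilon B_j$, which the standing assumption $A_j^2 \geq \varepsilon B_j$ renders nonnegative; the root on which the second derivative is positive is the global minimizer on $(0, \infty)$ and, after rationalization of $A_j - \varepsilon B_j/[A_j + \sqrt{A_j^2-\varepsilon B_j}]$, reproduces the stated formula for $\lambda_{\mu^*, j}$.

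The step I expect to be the main obstacle is the \emph{self-consistency check}: the hypothesis~\eqref{condition:varepsilon_barycenter} is stated implicitly, with $\varepsilon$ appearing on both sides, and one must verify that substituting the resulting $\lambda_{\mu^*, j}$ into the activation inequalities $\lambda_{\ell, j}\lambda_{\mu^*, j} > \varepsilon/4$ recovers precisely~\eqref{condition:varepsilon_barycenter}, rather than a strictly stronger or weaker bound. A secondary bookkeeping issue concerns coordinates $j > d_\ell$ for some (or all) $\ell$: these contribute to $\tr(\Sigma_{\mu^*}^2)$ but not to the coupling terms, so the sums $A_j, B_j$ with indicator $\mathbf{1}_{j \leq d_\ell}$ handle the dependence uniformly, and one obtains $\lambda_{\mu^*, j} = 0$ whenever $B_j = 0$, consistent with the stated expression.
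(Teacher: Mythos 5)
Your proposal is correct in substance but reaches the result by a genuinely different elimination order than the paper. The paper's proof keeps the correlation coefficients $\kappa_{\ell y,j}$ of each coupling as the primary variables: it first minimizes over the barycenter eigenvalue $\lambda_{y,j}$ for fixed $\{\kappa_{\ell y,j}\}_\ell$ (obtaining $\lambda_{y,j}=\sum_\ell\alpha_\ell\lambda_{\ell,j}\kappa_{\ell y,j}\mathbf{1}_{j\le d_\ell}$), and is then left with the coupled multivariable problem of maximizing $\bigl[\sum_\ell\alpha_\ell\lambda_{\ell,j}\kappa_{\ell y,j}\bigr]^2+\frac{\varepsilon}{2}\sum_\ell\alpha_\ell\log(1-\kappa_{\ell y,j})$, which requires the full machinery of Lemma~\ref{lemma:minimization_square_log} (a system of stationarity equations plus a quadratic in the aggregate $\sum_\ell a_\ell x_\ell$). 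You invert the order: you minimize over each coupling first, which is exactly Theorem~\ref{theorem:entropic_gw:inner_product:closed_form}, and are left with a single scalar function $\lambda^2-2A_j\lambda+\frac{\varepsilon B_j}{2}\log\lambda$ per coordinate. Since the couplings $\pi_{\ell,y}$ for different $\ell$ are unconstrained across $\ell$ once the common marginal $\mu$ is fixed, the two orders of minimization agree, and your route is cleaner: it reuses an already-proved closed form, bypasses Lemma~\ref{lemma:minimization_square_log} entirely, and makes the condition $A_j^2\ge\varepsilon B_j$ appear transparently as the discriminant of a one-dimensional quadratic. One can check the two answers coincide: your larger root $\frac{1}{2}\bigl(A_j+\sqrt{A_j^2-\varepsilon B_j}\bigr)$ equals $A_j-\frac{\varepsilon B_j}{2[A_j+\sqrt{A_j^2-\varepsilon B_j}]}$, which is what the paper's own expression for $\kappa_{\ell y,j}$ yields; be aware that the formula displayed in the theorem statement is missing the factor $2$ in the denominator (as written it simplifies to $\sqrt{A_j^2-\varepsilon B_j}$), an internal inconsistency of the paper rather than an error of yours.

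Two caveats, both shared with the paper's own proof rather than specific to your route. First, dropping the positive parts requires not only that the smooth critical point satisfies the activation inequalities (your self-consistency check, which is precisely condition~\eqref{condition:varepsilon_barycenter}), but also a comparison argument showing the global minimum of the piecewise-defined objective is attained on the smooth branch; the analogue of Lemma~\ref{lemma:minimizer_g_plus} is needed here and neither you nor the paper supplies it for the barycenter. Second, applying Theorem~\ref{theorem:entropic_gw:inner_product:closed_form} coordinate-by-coordinate with the pairing $j\leftrightarrow j$ presumes the optimal $\lambda_{\mu^*,j}$ are nonincreasing in $j$; this should be verified (it follows since $A_j$ and $B_j$ are built from sorted eigenvalues), but again the paper is silent on it as well.
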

The proof of Theorem~\ref{theorem:entropic_barycenter} is in Appendix~\ref{appendix:entropic_barycenter}.
\begin{remark} 
In the above theorem, we need a set of conditions for $\varepsilon$ that could be satisfied when $\varepsilon$ is small, which is often chosen in practice. When $\varepsilon$ is not small, then the readers could follow the guideline in  Lemma \ref{lemma:minimization_square_log} in Appendix~\ref{sec:auxiliary_results} to find the covariance matrix $\Sigma_{\mu^*}$. The proof of Theorem~\ref{theorem:entropic_barycenter} needs the below lemma to show the existence of a Gaussian minimizer.
\end{remark}
\begin{lemma}\label{lemma:KL_conditional_gaussian_min}
Let $Q_X=\mathcal{N}(\gamma_x,\Sigma_x)$ be a Gaussian distribution in $\br^m$ and $P_Y$ be a probability distribution with mean $\gamma_y\in\br^n$. Denote by $\Pi(Q_X,P_Y)$ the family of all probability distributions in $\mathbb{R}^{m+n}$ which have marginal distributions $Q_{X}$ and $P_Y$, and variance $\Sigma_{X,Y}=$\scalebox{0.75}{$\begin{pmatrix}\Sigma_x & \Sigma_{xy} \\ \Sigma_{xy}^{\top} & \Sigma_y\end{pmatrix}$}. Here, $\Sigma_x$ and $\Sigma_y$ are non-degenerate covariance matrices of size $m\times m$ and $n\times n$, respectively. Then
\begin{align*}
    \mathcal{N}\big([\gamma_x, \gamma_y]^{\top},\Sigma_{X,Y}\big) \in \argmin_{\substack{P_{X,Y}\in \Pi(Q_X,P_Y)\\ }} \mathsf{KL}\big(P_{X,Y} \|Q_X \otimes P_Y\big).
\end{align*}
\end{lemma}
The proof of Lemma \ref{lemma:KL_conditional_gaussian_min} is in Appendix~\ref{appendix:KL_conditional_gaussian_min}. 

\section{Empirical Studies}\label{sec:empirical_study}
In this section, we will use the derived closed-forms to inspect the behavior of algorithms solving entropic Gromov-Wasserstein problem, in particular those studied in \cite{peyre2016gromov}. We use the implementation of these algorithms in Python Optimal Transport library \citep{flamary2017pot}.
\paragraph{Projected mirror descent for $\mathsf{IGW}_\varepsilon$ (Figure \ref{figure:convergence}).} The algorithm in \cite{peyre2016gromov} reads
\begin{align*}
    C^{(t)} &= -D_\aA P^{(t)} D_\bB \\
    P^{(t + 1)} &= \min_{P \in \Pi(\aA, \bB)} \langle P, C^{(t)} \rangle - \varepsilon H(P),
\end{align*}
in which $D_\aA, D_\bB$ correspond to inner product cost matrices on supports of $\mu_N$ and $\nu_N$, $\Pi(\aA, \bB)$ is the set of discrete couplings $\{P \in \RR^{N \times N}: P \mathbf{1}_N = \aA, P^\top \mathbf{1}_N = \bB\}$, and $\langle P, C^{(t)} \rangle = \sum_{i, j \in [N]} P_{ij} C^{(t)}_{ij}$. Here, we set $m = 2, n = 3$ and sample $N$ (between $10$ and $2000$) data points from $\Nn(\zeros_2, \Sigma_\mu)$ and $\Nn(\zeros_3, \Sigma_\nu)$, in which $\Sigma_\mu$ and $\Sigma_\nu$ are diagonal matrices whose diagonal values are uniformly samples from the interval $[0, 2]$. The regularization parameter $\varepsilon$ is chosen from $\{0.5, 1, 5\}$. We plot means and one standard-deviation areas over 20 independent runs for each $N$. As expected, with more samples we can approximate $\mathsf{IGW}_\varepsilon$ more accurately. However, since this algorithm only converges to a stationary point, there might still be a gap between the converged value and the optimal value.
\begin{figure}[!t]
    \centering
    \includegraphics[width=0.49\textwidth]{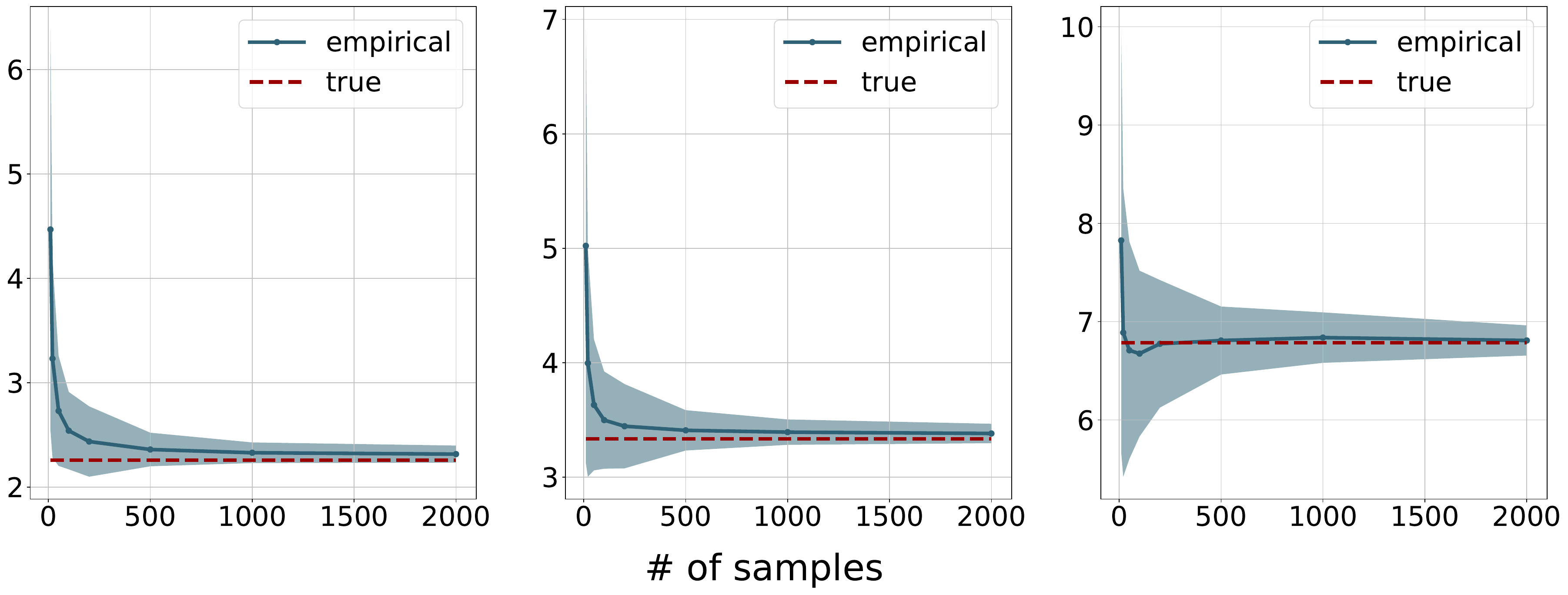}  
    \caption{Empirical convergence for \cite{peyre2016gromov} in computing $\mathsf{IGW}_\varepsilon$. From left to right: $\varepsilon=0.5$, $\varepsilon=1$ and $\varepsilon=5$. The red dashed lines correspond to the theoretical optimal values from Theorem \ref{theorem:entropic_gw:inner_product:closed_form}, while the blue lines and shaded regions are the means and standard variations of the objective values computed according to \cite{peyre2016gromov}, respectively.}
    \label{figure:convergence}
\end{figure}

\paragraph{Visualizing optimal transportation plans of for $n = m = 1$ (Figure \ref{figure:transportation_plan}).} Now we take a look at the transportation plans computed by the above algorithm for different values of $\varepsilon$. Specifically, we create $1000$-bin histograms of $\Nn(0, \alpha)$ and $\Nn(0, \beta)$ (which will be $\mu_N$ and $\nu_N$ with $N = 1000$) in which $\alpha = 2, \beta = 10$, and set $\varepsilon \in \{0.1, 1, 20, 40, 80, 100\}$ (values in this set are chosen based on $\alpha \beta$). The algorithm is run till convergence (with tolerance $10^{-9}$) and the plans are plotted in Figure \ref{figure:transportation_plan}. It is apparent that the plans returned by the projected mirror descent algorithm are Gaussians and resemble our theoretical ones (except for the case $\varepsilon = 1$, but noting that when $n = m = 1$, if $\Nn(\zeros, (\begin{smallmatrix} \alpha & \gamma \\ \gamma & \beta \end{smallmatrix}))$ is an optimal plan for $\mathsf{IGW}_\varepsilon$, so is $\Nn(\zeros, (\begin{smallmatrix} \alpha & -\gamma \\ -\gamma & \beta \end{smallmatrix})$).
\begin{figure}[!t]
    \centering
    \includegraphics[width=0.49\textwidth]{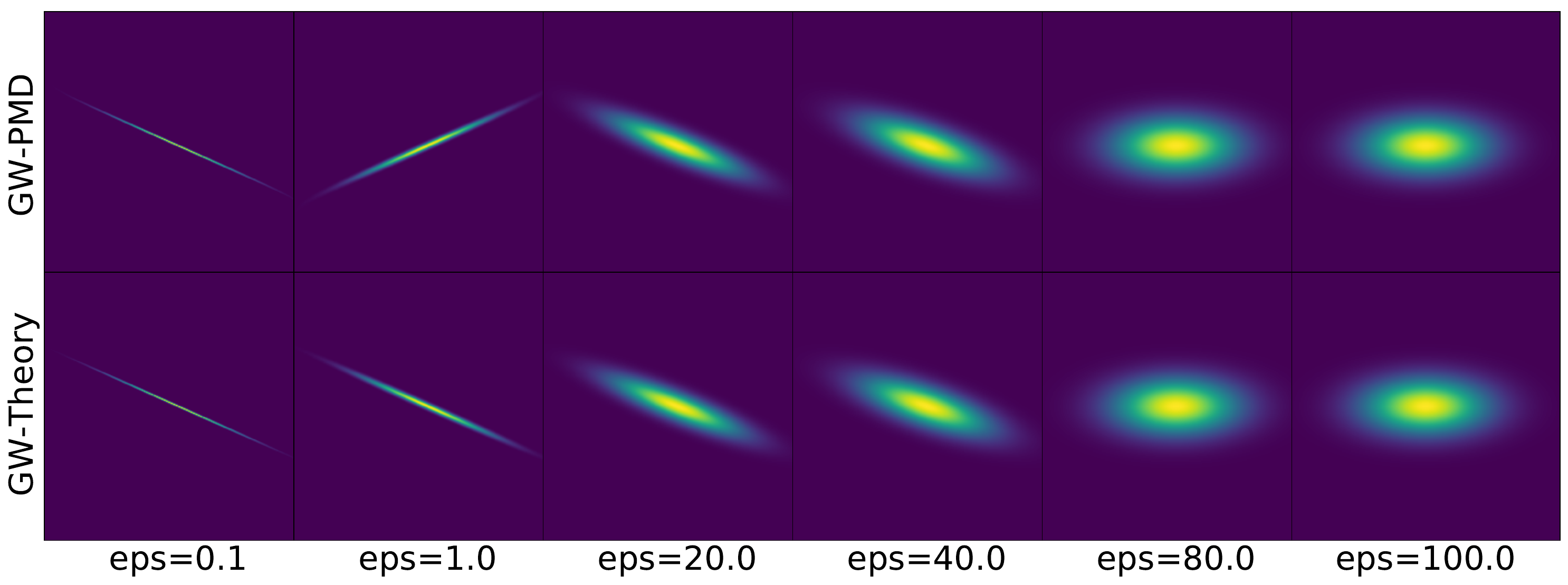}  
    \caption{Empirical transportation plans between $1$-dimensional Gaussians $\Nn(0, 2)$ and $\Nn(0, 10)$. From left to right:
    $\varepsilon = 0.1, \varepsilon = 1, \varepsilon = 20, \varepsilon = 40, \varepsilon = 80, \varepsilon = 100$. The first rows present plans returned by \cite{peyre2016gromov} and the second row corresponds to theoretical plans in Theorem \ref{theorem:entropic_gw:inner_product:closed_form}.}
    \label{figure:transportation_plan}
\end{figure}

\paragraph{Alternating minimization for the barycenter problem (Figure \ref{figure:barycenter}).} Given $T$ discrete measures $\{\mu^{(i)}_N\}_{i = 1}^T$, a set of weights $\{\alpha_i\}_{i = 1}^T$, cost matrices $C^{(i)}$ corresponding to measure $\mu^{(i)}_N$ and a probability vector $p$, \cite{peyre2016gromov} proposes an alternating minimization scheme to find a $d$-dimensional barycenter $\mu$ of $\{\mu^{(i)}_N\}_{i = 1}^T$ with probability mass $p$, i.e., finding the cost matrix $C$ on the support of this barycenter. In our case of inner product metric, $C$ is a Gram matrix, and we can recover the supports of $\mu$ via the Cholesky decomposition $C = L L^\top$ where $B \in \RR^{N \times N}$ and choosing the first $d$ columns of $L$. In our setting, for $i \in [T]$, we choose $\mu^{(i)}_N = \sum_{j = 1}^N \frac{1}{N} \delta_{X_j}$ where $X_j \sim \Nn(0, \Sigma^{(i)})$, $C^{(i)}_{kl} = \langle X_k, X_l \rangle$ for $k, l \in [N]$ and $p = \mathbf{1}_N / N$. Specifically, we consider $N = 1000, T = 3, \Sigma^{(1)} = 5, \Sigma^{(2)} = \mathrm{diag}(10, 1), \Sigma^{(3)} = \mathrm{diag}(2, 1, 1)$, $\alpha = (0.3, 0.6, 0.1)$ and $d = 2$. The regularization parameter $\varepsilon$ is set to $0.1$, which satisfies all the constraints in Theorem \ref{theorem:entropic_barycenter}. After finding $N$ support points for $\mu$ by the discussed method, we compute the sample covariance $\hat{\Sigma}$, perform orthogonal diagonalization, and apply the corresponding transformation to the support. We then compute its sample mean and sample covariance, and compare the Gaussian with these statistics with our theoretical optimal plan (see Figure \ref{figure:barycenter}). It is worth noting that any orthogonally-transformed version of the found barycenter is also a barycenter of $\{\mu^{(i)}_N\}_{i = 1}^T$ under Gromov-Wasserstein setting.

\begin{figure}[!t]
    \centering
    \includegraphics[width=0.49\textwidth]{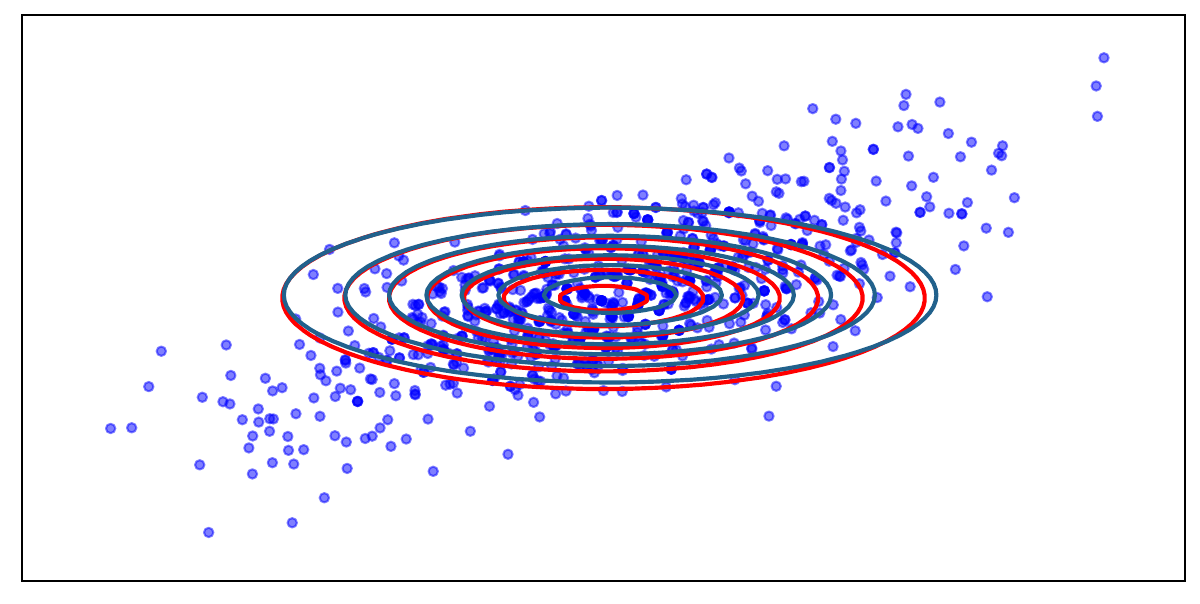}
    \caption{Visualizing the barycenter. The blue points correspond to one possible set of support for the computed barycenter by \cite{peyre2016gromov} (see discussion). The blue contours are the rotated version of the Gaussian fitted to this support set (for comparison). The red contours represent the theoretical barycenter whose form is presented in Theorem \ref{theorem:entropic_barycenter}.}
    \label{figure:barycenter}
\end{figure}

\section{Conclusion}
\label{sec:conclusion}
In this paper, we provide a comprehensive study of the entropic (unbalanced) inner product Gromov-Wasserstein (IGW) between (unbalanced) Gaussian distributions. We demonstrate that the optimal transportation plan is (unbalanced) Gaussian distribution. Based on that result and a novel application of von Neumann's trace inequality, we derive the closed-form expression for the entropic (unbalanced) IGW between these distributions. Finally, we also consider the (entropic) Gromov-Wasserstein barycenter problem of multiple Gaussian measures. We prove that the barycenter problem 
admits a Gaussian minimizer  and obtain the closed-form expression for the covariance matrix of the barycenter when the entropic regularization parameter is small. 

\bibliography{example_paper}
\bibliographystyle{icml2022}

\newpage
\appendix
\onecolumn
\begin{center}
\textbf{\Large{Supplementary Materials for ``Entropic Gromov-Wasserstein between Gaussian Distributions''}}
\end{center}
In this supplement, we first provide proofs of remaining lemmas and theorems in Appendix~\ref{sec:proofs_remaining_lemmas}. Then, we present how to derive a closed-form formulation for $\Upsilon^{*}$ in Theorem~\ref{theorem:UGW} in Appendix~\ref{sec:upsilon_calculation}. Auxiliary results are presented in Appendix~\ref{sec:auxiliary_results} while another proof of Lemma~\ref{lemma:order_eigenvalues} is in Appendix~\ref{sec:another_proof}. We highlight the issue of obtaining closed-form expression for entropic inner product Gromov-Wasserstein between non-zero means Gaussian distributions in Appendix~\ref{sec:nonzero_means}. Finally, we revisit the entropic optimal transport between (unbalanced) Gaussian distributions in Appendix~\ref{sec:revisit_entropic_Gaussian} and provide a simpler proof than that in~\citep{janati2020entropic} to derive the closed-form expression for that problem.
\section{Proofs of remaining results}
\label{sec:proofs_remaining_lemmas}
This appendix is devoted to provide the proofs of lemmas and theorems presented in the paper.
\subsection{Proof of Lemma~\ref{lemma:IGW_rotation}}
\label{appendix:IGW_rotation}
Firstly, note that when $\pi^\prime\in\Pi(T_m\#\mu,T_n\#\nu)$, there exists $\pi\in\Pi(\mu,\nu)$ such that $(T_m,T_n)\#\pi=\pi^\prime$. Therefore, we have
\begin{align}
\mathbb{E}_{\pi^\prime\otimes\pi^\prime}[\langle X,X^\prime\rangle-\langle Y,Y^\prime\rangle]^2&=\mathbb{E}_{(T_m,T_n)\#\pi\otimes(T_m,T_n)\#\pi}[\langle T_mX,T_mX^\prime\rangle-\langle T_nY,T_nY^\prime\rangle]^2\nonumber\\
&=\mathbb{E}_{\pi\otimes\pi}[\langle X,X^\prime\rangle-\langle Y,Y^\prime\rangle]^2.\label{eq:expectation_unchanged}
\end{align}
Furthermore, since $d\pi^\prime(x,y)=d\pi(T_m^{-1}(x),T_n^{-1}(y))$, $d(T_m\#\mu)(x)=d\mu(T_m^{-1}(x))$ and $d(T_n\#\nu)(y)=d\nu(T_n^{-1}(y))$, by changing of variables $x\mapsto T_m(u)$ and $y\mapsto T_n(v)$, we get 
\begin{align}
    \mathrm{KL}(\pi^\prime\|(T_m\#\mu)\otimes(T_n\#\nu))&=\int\log\left(\dfrac{d\pi'(x,y)}{d(T_m\#\mu)(x)d(T_n\#\nu)(y)}\right)\nonumber\\
    &=\int\log\left(\dfrac{d\pi'(T_m(u),T_n(v)))}{d(T_m\#\nu)(T_m(u))d(T_n\#\nu)(T_n(v))}\right)|\det(T'_m(u))\det(T'_n(v))|\nonumber\\
    &= \int\log\left(\dfrac{d\pi(u,v)}{d\mu(u)d\nu(v)}\right)\nonumber\\
    &=\mathrm{KL}(\pi\|\mu\otimes\nu))\label{eq:entropy_unchanged},
\end{align}
with a note that $|\det(T_m'(u)|=|\det(O_m)|=1$ and $|\det(T_n'(v))|=|\det(O_n)|=1$ since $O_m$ and $O_n$ are orthogonal matrices. Putting the results in equations \eqref{eq:expectation_unchanged} and \eqref{eq:entropy_unchanged} together, we obtain the conclusion.
\subsection{Proof of Theorem~\ref{theorem:entropic_gw:inner_product:closed_form}}
\label{appendix:entropic_gw:inner_product:closed_form}
Firstly, we will show that
\begin{align}
\label{eq:first_equality_balanced}
\E_{\pi\otimes\pi} \Big\{ \big[ \langle X,X^{\prime}\rangle - \langle Y,Y^{\prime}\rangle \big]^2 \Big\}  = \tr(\Sigma_{\mu}^2) + \tr(\Sigma_{\nu}^2)- 2\tr\big(K_{\mu\nu}^{\top}K_{\mu\nu}\big). 
\end{align}
Indeed, we have
\begin{align*}
    \E_{\pi\otimes\pi} \Big\{ \big[ \langle X,X^{\prime}\rangle - \langle Y,Y^{\prime}\rangle \big]^2 \Big\}  = \E_{\pi\otimes\pi} \langle X,X^{\prime}\rangle^2 + \E_{\pi\otimes\pi} \langle Y, Y^{\prime}\rangle^2 - 2 \E_{\pi\otimes\pi} \langle X, X^{\prime}\rangle \langle Y, Y^{\prime}\rangle.
\end{align*}
As $\pi$ is a coupling of two zero-mean distributions $\mu$ and $\nu$, $\pi$ also has mean zero. Combining this result with the independence between $X$ and $X^{\prime}$ and $Y$ and $Y^{\prime}$ leads to
\begin{align*}
    \E_{\pi\otimes\pi} \langle X,X^{\prime}\rangle^2 = \tr(\Sigma_{\mu}^2) \quad \text{ and } \quad \E_{\pi\otimes\pi} \langle Y,Y^{\prime}\rangle^2 = \tr(\Sigma_{\nu}^2).
\end{align*}
Meanwhile, we have
\begin{align*}
    \E_{\pi\otimes\pi} [\langle X, X^{\prime}\rangle \langle Y, Y^{\prime}\rangle] = \E_{\pi\otimes\pi} \Big[\sum_{i=1}^{m} \sum_{j=1}^n X_iX^{\prime}_{i}Y_jY^{\prime}_{j}\Big]= \sum_{i=1}^m \sum_{j=1}^n \E_{\pi\otimes\pi}[X_i Y_j]\E_{\pi}[X^\prime_i Y^\prime_j]= \tr\big(K_{\mu\nu}^{\top}K_{\mu\nu}\big).
\end{align*}
Putting the above results together, we obtain the desired equality~\eqref{eq:first_equality_balanced}. It indicates that we can rewrite the formulation of $\mathsf{IGW}_\varepsilon(\mu,\nu)$ as follows:
\begin{align}
    \label{eq:IGW_equality}
    \mathsf{IGW}_\varepsilon&(\mu,\nu) =  \tr(\Sigma_{\mu}^2) + \tr(\Sigma_{\nu}^2) +\min_{\pi\in\Pi(\mu,\nu)} \Big\{\varepsilon \KL(\pi\|\mu \otimes \nu)- 2 \tr\big(K_{\mu\nu}^{\top}K_{\mu\nu}\big)\Big\}.
\end{align}

To solve the minimization problem \eqref{eq:IGW_equality}, we firstly fix the matrix $K_{\mu\nu}$, therefore, the covariance matrix of $\pi$ is also fixed due to equation~\eqref{eq:plan_structure_balanced}. By Lemma~\ref{lemma:min_KL_Gaussian}, the optimal transport plan of this problem needs to be a Gaussian distribution.
Thus, according to Lemma~\ref{lemma:KL_calculate} in Appendix~\ref{sec:auxiliary_results} and part (b) of Lemma \ref{lemma:entropic_gw:inner_product:closed_form:supporting}, the entropic term in the objective function reads
\begin{align*}
   \varepsilon \KL(\pi\|\mu \otimes \nu)& = \frac{1}{2}\varepsilon\Big\{\tr\big(\Sigma_{\pi} \Sigma^{-1}_{\mu\otimes \nu}\big) - (m+n) +\log\Big( \frac{\det(\Sigma_{\mu\otimes \nu})}{\det(\Sigma_{\pi})}\Big) \Big\} \\
   & = -\frac{\varepsilon}{2}  \sum_{i=1}^{n} \log(1-\kappa_{\mu\nu,i}),
\end{align*}
where $([\kappa_{\mu \nu,i}]^\frac{1}{2})_{i=1}^n$ are singular values (in descending order) of matrix $\Sigma^{-\frac{1}{2}}_\mu K_{\mu\nu}\Sigma^{-\frac{1}{2}}_\nu$.
By applying part (c) of Lemma~\ref{lemma:entropic_gw:inner_product:closed_form:supporting}, the optimal value of the term $\tr\big(K_{\mu\nu}^{\top}K_{\mu\nu}\big)$ is achieved at $K_{\mu\nu}=\Sigma_{\mu}^{\frac{1}{2}}\diag([\kappa_{\mu\nu,k}]^{\frac{1}{2}})_{k=1}^n\Sigma_{\nu}^{\frac{1}{2}}$. Therefore, the optimization problem \eqref{eq:IGW_equality} reduces to
\begin{align*}
      \mathsf{IGW}_\varepsilon(\mu,\nu)=\tr(\Sigma_{\mu}^2) + \tr(\Sigma_{\nu}^2&)  - \max_{\kappa_{\mu\nu,k}\in[0,1)}\Big\{2 \sum_{k=1}^{n} \lambda_{\mu,k}\lambda_{\nu,k}\kappa_{\mu\nu,k}+ \frac{\varepsilon}{2} \sum_{k=1}^n\log(1-\kappa_{\mu\nu,k})\Big\}.
\end{align*}
The function $f(x) = ax + \frac{\varepsilon}{4} \log(1-x)$ for $a>0$ determined in the interval $[0,1)$  attains its maximum at $x = \big[ 1-\frac{\varepsilon}{4a} \big]^+ $. Thus, $\kappa_{\mu\nu,k}^{*} = \big[1 - \frac{\varepsilon}{4\lambda_{\mu,k}\lambda_{\nu,k}} \big]^+$ for all $k\in[n]$, and
\begin{align*}
    \mathsf{IGW}_\varepsilon(\mu,\nu)=\tr(\Sigma_{\mu}^2) + \tr(\Sigma_{\nu}^2) - 2 \sum_{k=1}^{n} \Big(\lambda_{\mu,k} \lambda_{\nu,k} - \frac{\varepsilon}{4}\Big)^+ + \frac{\varepsilon}{2} \sum_{k=1}^{n}\Big[\log(\lambda_{\mu,k} \lambda_{\nu,k}) - \log\Big(\frac{\varepsilon}{4}\Big)  \Big]^+.
\end{align*}
Additionally, we have
\begin{align*}
K^*_{\mu \nu} = \diag\Big( \Big\{\lambda_{\mu,k} \lambda_{\nu,k}  \Big[1 - \frac{\varepsilon}{4\lambda_{\mu,k}\lambda_{\nu,k}} \Big]^+\Big\}^{\frac{1}{2}}\Big)_{k=1}^n.
\end{align*}
As a consequence, the proof is completed.
\paragraph{When $\Sigma_\mu$ and $\Sigma_\nu$ are not diagonal:} The value of $\mathsf{IGW}_\varepsilon(\mu,\nu)$ still remains according to Lemma~\ref{lemma:IGW_rotation} but the optimal transport plan does not. More specifically, the equality conditions mentioned in part (c) of Lemma~\ref{lemma:entropic_gw:inner_product:closed_form:supporting} 
will be no longer valid. Instead, let $P_{\mu},D_{\mu}$ and $P_{\nu},D_{\nu}$ are the orthogonal diagonalizations of $\Sigma_\mu(=P_\mu D_\mu P^{\top}_\mu)$ and $\Sigma_\nu(=P_\nu D_\nu P^{\top}_\nu)$, respectively. By using the same approach as in Lemma~\ref{lemma:entropic_gw:inner_product:closed_form:supporting}, let $U_{\mu\nu}\Lambda^{\frac{1}{2}}_{\mu\nu}V^{\top}_{\mu\nu}$ be the SVD of matrix $\Sigma^{-\frac{1}{2}}_\mu K_{\mu\nu}\Sigma^{-\frac{1}{2}}_\nu$ where $\Lambda^{\frac{1}{2}}_{\mu\nu}=\diag([\kappa_{\mu\nu,k}]^{\frac{1}{2}})_{k=1}^n$, we have $K_{\mu\nu}=\Sigma^{\frac{1}{2}}_{\mu}U_{\mu\nu}\Lambda^{\frac{1}{2}}_{\mu\nu}V^{\top}_{\mu\nu}\Sigma^{\frac{1}{2}}_{\nu}$. Therefore,
\begin{align*}
    \tr(K^{\top}_{\mu\nu}K_{\mu\nu})&=\tr\big(\Sigma_{\nu}^{\frac{1}{2}} V_{\mu\nu} (\Lambda_{\mu\nu}^{\frac{1}{2}})^{\top} U_{\mu\nu}^{\top} \Sigma_{\mu} U_{\mu\nu} \Lambda_{\mu\nu}^{\frac{1}{2}} V_{\mu\nu}^{\top} \Sigma_{\nu}^{\frac{1}{2}} \big)\\
    &=\tr\big(V_{\mu\nu} (\Lambda_{\mu\nu}^{\frac{1}{2}})^{\top} U_{\mu\nu}^{\top} \Sigma_{\mu} U_{\mu\nu} \Lambda_{\mu\nu}^{\frac{1}{2}} V_{\mu\nu}^{\top} \Sigma_{\nu}\big)\\
    &=\tr\big(V_{\mu\nu} (\Lambda_{\mu\nu}^{\frac{1}{2}})^{\top} U_{\mu\nu}^{\top} P_{\mu}D_{\mu}P^{\top}_{\mu} U_{\mu\nu} \Lambda_{\mu\nu}^{\frac{1}{2}} V_{\mu\nu}^{\top}P_{\nu}D_{\nu}P^{\top}_{\nu} \big)\\
    &=\tr\big([P^{\top}_{\nu}V_{\mu\nu} (\Lambda_{\mu\nu}^{\frac{1}{2}})^{\top}] [U_{\mu\nu}^{\top} P_{\mu}D_{\mu}][P^{\top}_{\mu} U_{\mu\nu} \Lambda_{\mu\nu}^{\frac{1}{2}}] [V_{\mu\nu}^{\top}P_{\nu}D_{\nu}] \big)\\
    &\leq \sum_{i=1}^{n} \lambda_{\mu,i} \lambda_{\nu,i}\kappa_{\mu\nu,i}.
\end{align*}
The equality occurs when $U_{\mu\nu}=P_{\mu}$ and $V_{\mu\nu}=P_{\nu}$. Hence, the covariance matrix $K^*_{\mu\nu}$ in this case turns into
\begin{align*}
    K^{*,\text{new}}_{\mu\nu}=\Sigma^{\frac{1}{2}}_{\mu}P_{\mu}\Lambda^{\frac{1}{2}}_{\mu\nu}P^{\top}_{\nu}\Sigma^{\frac{1}{2}}_{\nu}=P_{\mu}D^{\frac{1}{2}}_{\mu}\Lambda^{\frac{1}{2}}_{\mu\nu}D^{\frac{1}{2}}_{\nu}P_{\nu}=P_{\mu}K^*_{\mu\nu}P^{\top}_{\nu}.
\end{align*}
\subsection{Proof of Lemma~\ref{lemma:min_KL_Gaussian}}
\label{appendix:min_KL_Gaussian}
Let $q_v$ and $p_u$ be the probability density functions of distributions $Q_v$ and $P_u$, respectively, whereas $q_u$ be the probability density function of the Gaussian distribution $Q_u$ in $\br^d$ which has mean $u$ and variance $\Sigma_u$. Then, we have
\begin{align*}
    \KL(P_{u}\|Q_{v}) &= \int_{\br^d}p_u \log\frac{p_{u}}{q_{v}}  \dd x
    = \int_{\br^d} p_{u} \log\frac{p_{u}}{q_u} \dd x + \int_{\br^d} p_u \log \frac{q_u}{q_{v}} \dd x \\
    &= \KL(P_u\|Q_u) + \int_{\br^d} p_u \log\frac{q_u}{q_v} \dd x \\
    &\geq  \int_{\br^d} p_u \log\frac{q_u}{q_v} \dd x.
\end{align*}
The equality happens when $P_u = Q_u$. Now we prove the lower bound is constant. 

Note that
\begin{align}
    \int_x p_u \log q_v &= \E_{p_u}\big[\log(q_v)\big] = \E_{p_u} \Big[ -\frac{1}{2}  \log \big[(2 \pi)^d \det (\Sigma_v) \big] -\frac{1}{2} (x - v)^\top \Sigma^{-1}_v (x - v) \big) \Big] \nonumber\\
    &= -\frac{1}{2}\E_{p_u} \Big[ (x-v)^{\top}\Sigma_v^{-1}(x-v)\Big] + \mathsf{const} \nonumber\\
    &=-\frac{1}{2}\E_{p_u}\Big[(x-u + u-v)^{\top} \Sigma_v^{-1}(x-u+u-v) \Big] + \mathsf{const} \nonumber\\
    &= -\frac{1}{2}\E_{p_u}\Big[(x-u)^{\top}\Sigma_v^{-1}(x-u) + 2(x-u)^{\top}\Sigma_{v}^{-1}(u-v) + (u-v)^{\top}\Sigma_v^{-1}(u-v) \Big] + \mathsf{const}\nonumber\\
    &= -\frac{1}{2}\E_{p_u}\Big[(x-u)^{\top}\Sigma_v^{-1}(x-u)\Big] + \mathsf{const}\nonumber\\
    &= -\frac{1}{2}\E_{p_u}\Big[(x-u)^{\top}\Sigma_v^{-\frac{1}{2}}\Sigma_v^{-\frac{1}{2}}(x-u)\Big] + \mathsf{const}\nonumber\\
    &=-\frac{1}{2}\E_{p_u}\Big[\tr\big(\Sigma_v^{-\frac{1}{2}}(x-u)(x-u)^{\top}\Sigma_v^{-\frac{1}{2}} \big) \Big] + \mathsf{const}\nonumber\\
    & =-\frac{1}{2} \tr\big(\Sigma_v^{-\frac{1}{2}}\Sigma_u\Sigma_v^{-\frac{1}{2}} \big) + \mathsf{const}\label{eq:expectation_log}.
\end{align}
Similarly, we have $\E_{p_u}\big[\log(q_u) \big]$ is a constant. Hence, the proof is completed.

\subsection{Proof of Lemma~\ref{lemma:entropic_gw:inner_product:closed_form:supporting}}
\label{appendix:entropic_gw:inner_product:closed_form:supporting}
\textbf{(a)} We start with decomposing the matrix $K_{\mu\nu}$ as $K_{\mu\nu} = \Sigma_{\mu}^{\frac{1}{2}}U_{\mu\nu}\Lambda_{\mu\nu}^{\frac{1}{2}}V_{\mu\nu}^{\top}\Sigma_{\nu}^{\frac{1}{2}}$. It follows from the fact $\Sigma_{\pi}$ is non-negative definite that
\begin{align*}
    \Sigma_{\nu} &\succeq K_{\mu\nu}^{\top}\Sigma_{\mu}^{-1} K_{\mu\nu} \\
    \Leftrightarrow \Sigma_{\nu}  &\succeq \Sigma_{\nu}^{\frac{1}{2}} V_{\mu\nu} (\Lambda_{\mu\nu}^{\frac{1}{2}})^{\top} U_{\mu\nu}^{\top} U_{\mu\nu} \Lambda_{\mu\nu}^{\frac{1}{2}} V_{\mu\nu}^{\top}\Sigma_{\nu}^{\frac{1}{2}} \\
    \Leftrightarrow \Sigma_{\nu} &\succeq \Sigma_{\nu}^{\frac{1}{2}}V_{\mu\nu}\Lambda_{\mu\nu} V_{\mu\nu}^{\top} \Sigma_{\nu}^{\frac{1}{2}}\\
    \Leftrightarrow \id_n & \succeq V_{\mu\nu}\Lambda_{\mu\nu} V_{\mu\nu}^{\top},
\end{align*}
where $\Lambda_{\mu\nu}=\diag\big(\kappa_{\mu\nu,k}\big)_{k=1}^n$ is an $n\times n$ matrix.
Since $V_{\mu\nu}$ is an unitary matrix, all eigenvalues of matrix $\Lambda_{\mu\nu}$, which are $(\kappa_{\mu\nu,k})_{k=1}^n$, belong to the interval $[0,1]$.

\noindent
\textbf{(b)} By the definition of matrix $\Sigma_\pi$, we have
\begin{align*}
    \det(\Sigma_\pi) &= \det(\Sigma_{\mu}) \det\big(\Sigma_{\nu} - K_{\mu\nu}^{\top}\Sigma_{\mu}^{-1} K_{\mu\nu} \big)  \\
    &= \det(\Sigma_{\mu}) \det(\Sigma_{\nu}) \det\Big(\id_n - V_{\mu\nu}\Lambda_{\mu\nu} V_{\mu\nu}^{\top} \Big) \\
    &= \det(\Sigma_{\mu}) \det(\Sigma_{\nu}) \prod_{k=1}^{n} (1-\kappa_{\mu\nu,k}).
\end{align*}
The third equality results from the fact that all eigenvalues of matrix $\id_n - V_{\mu\nu}\Lambda_{\mu\nu} V_{\mu\nu}^{\top} $ are $1- \kappa_{\mu\nu,k}$ for $k\in[n]$.

\noindent
\textbf{(c)} Using the decomposition of matrix $K_{\mu\nu}$ in part (a), $\tr\big(K_{\mu\nu}^{\top}K_{\mu\nu}\big)$ can be rewritten as
\begin{align*}
    \tr\big(\Sigma_{\nu}^{\frac{1}{2}} V_{\mu\nu} (\Lambda_{\mu\nu}^{\frac{1}{2}})^{\top} U_{\mu\nu}^{\top} \Sigma_{\mu} U_{\mu\nu} \Lambda_{\mu\nu}^{\frac{1}{2}} V_{\mu\nu}^{\top} \Sigma_{\nu}^{\frac{1}{2}} \big)
    = \tr\Big(\big[V_{\mu\nu} (\Lambda_{\mu\nu}^{\frac{1}{2}})^{\top}\big]\big[ U_{\mu\nu}^{\top} \Sigma_{\mu}\big] \big[ U_{\mu\nu} \Lambda_{\mu\nu}^{\frac{1}{2}}\big] \big[ V_{\mu\nu}^{\top} \Sigma_{\nu}\big] \Big).
\end{align*}
Applying the generalization of the von Neumann's inequality \citep{kristof1969neumann,horn_johnson_1991} for singular values with a note that the equality happens when $U_{\mu \nu}$ and $V_{\mu\nu}^{\top}$ are identity matrices, we obtain
\begin{align*}
    \tr\big(K_{\mu\nu}^{\top}K_{\mu\nu}\big) \leq \sum_{i=1}^{n} \lambda_{\mu,i} \lambda_{\nu,i}\kappa_{\mu\nu,i}.
\end{align*}
Hence, we obtain the conclusion of this lemma.
\subsection{Proof of Lemma~\ref{lemma:lower_bound_KL}}
\label{appendix:lower_bound_KL}
By the formula for KL divergence between Gaussians in Lemma~\ref{lemma:KL_calculate} in Appendix~\ref{sec:auxiliary_results} and applying the von Neumann's trace inequality \citep{kristof1969neumann,horn_johnson_1991}, we have
 \begin{align*}
     \KL(\overline{\pi}_x\|\overline{\mu}) &= \frac{1}{2}\left\{\tr\big(\Sigma_x\Sigma_{\mu}^{-1}  \big) - m + \log\Big(\frac{\det(\Sigma_{\mu})}{\det(\Sigma_x)}\Big)\right\} \\
     &\geq \frac{1}{2}  \sum_{i=1}^m \left\{ \frac{\lambda_{x,i}}{\lambda_{\mu,i}} -\log\Big(\frac{\lambda_{x,i}}{\lambda_{\mu,i}}\Big)-1  \right\}.
 \end{align*}
 Similarly, we find that
 \begin{align*}
     \KL(\overline{\pi}_y\|\overline{\nu}) \geq \frac{1}{2} \sum_{j=1}^n\left\{ \frac{\lambda_{y,j}}{\lambda_{\nu,j}}  - \log\Big(\frac{\lambda_{y,j}}{\lambda_{\nu,j}}\Big)-1\right\}.
 \end{align*}
 For the last equality, $\KL(\overline{\pi}\|\overline{\mu}\otimes \overline{\nu})$ is equal to 
 \begin{align*}
     & \frac{1}{2}\left\{ \tr\big(\Sigma_{\pi} \Sigma^{-1}_{\mu\otimes \nu} \big)- (m+n) - \log \Big(\frac{\det(\Sigma_{\pi})}{\det(\Sigma_{\mu\otimes \nu})} \Big) \right\} \\
     =~& \frac{1}{2} \Big\{\tr\big(\Sigma_x \Sigma_{\mu}^{-1}\big) + \tr\big(\Sigma_y\Sigma_{\nu}^{-1} \big) - (m+n)- \log \Big(\frac{\det(\Sigma_x)\det(\Sigma_y)}{\det(\Sigma_{\mu}) \det(\Sigma_{\nu}) } \Big) - \sum_{k=1}^{n}\log(1-\kappa_{xy,k})\Big\} \\
     =~& \KL(\overline{\pi}_x\|\overline{\mu}) + \KL(\overline{\pi}_y\|\overline{\nu}) - \frac{1}{2}\sum_{k=1}^n \log(1- \kappa_{xy,k}).
 \end{align*}
The first equality results from the form of the matrix $\Sigma_{\mu\otimes\nu}=\begin{pmatrix}
\Sigma_\mu & \zeros_{m\times n}\\
\zeros_{n\times m} & \Sigma_\nu
\end{pmatrix}$.
\subsection{Proof of Theorem~\ref{theorem:UGW}}
\label{appendix:UGW}
For the sake of computing, we will firstly show that all quadratic KL terms in the objective function~\eqref{eq:UIGW_formulation} can be transformed to their normal versions as follows:
\begin{align}
\KLD(\pi_x \| \mu)&= 2m_{\pi}^2 \KL(\bar{\pi}_x \| \bar{\mu}) + \KL(m_{\pi}^2\|m_{\mu}^2),\label{eq:kl_pi_x}\\
\KLD(\pi_y\|\nu) &= 2m_{\pi}^2 \KL(\bar{\pi}_y \| \bar{\nu}) + \KL(m_{\pi}^2\|m_{\nu}^2), \label{eq:kl_pi_y}\\
\KLD(\pi\|\mu\otimes \nu)&= 2m_{\pi}^2 \KL(\bar{\pi} \|\overline{\mu} \otimes \overline{\nu}) + \KL\big(m_{\pi}^2 \|m_{\mu}^2m_{\nu}^2 \big)\label{eq:kl_pi}.
\end{align}
By applying parts (ii) and (i) of Lemma~\ref{lemma:kl_divergence} in that order, we get the equation~\eqref{eq:kl_pi_x}
\begin{align*}
    \KLD(\pi_x\|\mu) &= \KLD(m_{\pi}\bar{\pi}_x\|m_\mu\bar{\mu})\\
    &= m^2_{\pi}\KLD(\bar{\pi}_x\|\bar{\mu}) + m^2_{\pi}\log\left(\frac{m^2_\pi}{m^2_\mu}\right)+(m^2_\mu-m^2_\pi)\\
    &= 2m^2_\pi\KL(\bar{\pi}_x\|\bar{\mu})+\KL(m^2_\pi\|m^2_\mu).
\end{align*}
The equations~\eqref{eq:kl_pi_y} and \eqref{eq:kl_pi} are obtained similarly. 
As a result, the objective function in equation \eqref{eq:UIGW_formulation} can be rewritten as
\begin{align}
 &m_{\pi}^2 \E_{\bar{\pi}\otimes\bar{\pi}} \Big\{ \big[ \langle X,X^{\prime}\rangle - \langle Y,Y^{\prime}\rangle \big]^2 \Big\}+  \varepsilon\Big\{ 2m_{\pi}^2 \KL(\bar{\pi} \|\overline{\mu} \otimes \overline{\nu}) + \KL\big(m_{\pi}^2 \|m_{\mu}^2m_{\nu}^2 \big)\Big\}\nonumber \\
 &\qquad\qquad+\tau \Big\{  2m_{\pi}^2 \KL(\bar{\pi}_x \| \bar{\mu}) + \KL(m_{\pi}^2\|m_{\mu}^2)+ 2m_{\pi}^2 \KL(\bar{\pi}_y \| \bar{\nu}) + \KL(m_{\pi}^2\|m_{\nu}^2) \Big\}\nonumber\\
 = & m_{\pi}^2 \Upsilon + \varepsilon\KL(m_{\pi}^2\|m_{\mu}^2 m_{\nu}^2)+ \tau \Big\{\KL(m_{\pi}^2\|m_{\mu}^2) + \KL(m_{\pi}^2\|m_{\nu}^2) \Big\}, \label{eq:optimize_mass}
\end{align}
in which $\Upsilon$ is defined as 
\begin{align}\label{eq:S_formulation}
    \Upsilon:=\E_{\bar{\pi}\otimes\bar{\pi}} \Big\{ \big[ \langle X,X^{\prime}\rangle - \langle Y,Y^{\prime}\rangle \big]^2 \Big\} +2\varepsilon\KL(\bar{\pi}\|\bar{\mu}\otimes\bar{\nu})+2\tau\Big\{\KL(\bar{\pi}_x\|\bar{\mu})+\KL(\bar{\pi}_y\|\bar{\nu})\Big\}.
\end{align}
Now, we will divide the rest of the proof into two parts: shape optimization and mass optimization.
\paragraph{Shape optimization.} The problem of interest now is $\min_{\bar{\pi}}\Upsilon$. We will first prove that the optimal shape $\bar{\pi}$ is a zero-mean Gaussian measure. 
Denote by $(u,v)$ and $\Sigma_\pi$ the mean vector and covariance matrix of $\bar{\pi}$ where $u\in\br^m$ and $v\in\br^n$. Let $Z=X-u, Z^{\prime}=X^{\prime}-u$ and $T=Y-v,T^{\prime}=Y^{\prime}-v$. According to Lemma~\ref{lemma:expectation_factorize}, we have
\begin{align*}
    &\E_{\bar{\pi}\otimes\bar{\pi}} \Big\{ \big[ \langle X,X^{\prime}\rangle - \langle Y,Y^{\prime}\rangle \big]^2 \Big\}\\
    =&\E_{\bar{\pi}\otimes\bar{\pi}} \Big\{ \big[ \langle Z,Z^{\prime}\rangle - \langle T,T^{\prime}\rangle \big]^2 \Big\}+2u^{\top}\Sigma_x u +2u^{\top}\Sigma_{xy}v+2v^{\top}\Sigma_y v + \Big[\|u\|^2-\|v\|^2\Big]^2\\
    =&\tr(\Sigma^2_x)+\tr(\Sigma^2_y) - 2\tr(K^{\top}_{xy}K_{xy})+2u^{\top}\Sigma_x u+ 2u^{\top}\Sigma_{xy}v+2v^{\top}\Sigma_y v + \Big[\|u\|^2-\|v\|^2\Big]^2.
\end{align*}
The second equality is obtained by using the same arguments for deriving equation~\eqref{eq:first_equality_balanced}. Let us fix the covariance matrix and mean vector of $\bar{\pi}$, therefore, the value of the expectation term in equation~\eqref{eq:S_formulation} is also fixed. Then, Lemma~\ref{lemma:min_KL_Gaussian} indicates that $\bar{\pi}$ needs to be a Gaussian measure.
while Lemma~\ref{lemma:KL_calculate} forces its mean to be zero. 

Consequently, the problem $\min_{\bar{\pi}}\Upsilon$ is reduced to
\begin{align}\label{eq:optimize_covariance}
    \Upsilon^*:=\min_{\Sigma_\pi}\Big\{\tr(\Sigma^2_x)+\tr(\Sigma^2_y) - 2\tr(K^{\top}_{xy}K_{xy})+2\varepsilon\KL(\bar{\pi}\|\bar{\mu}\otimes\bar{\nu})+2\tau\KL(\bar{\pi}_x\|\bar{\mu})+2\tau\KL(\bar{\pi}_y\|\bar{\nu})\Big\}
\end{align}
Using the same arguments as in Theorem \ref{theorem:entropic_gw:inner_product:closed_form}, we obtain that the minimum value of
\begin{align*}
     \tr(\Sigma_{x}^2)+ \tr(\Sigma_y^2)- 2 \tr(K^{\top}_{xy} K_{xy}) +2\varepsilon\KL(\bar{\pi}\|\bar{\mu}\otimes\bar{\nu})
\end{align*}
is equal to
 \begin{align*}
     \|\lambda_x\|_2^2 + \|\lambda_y\|_2^2 - 2\sum_{k=1}^n \Big(\lambda_{x,k}\lambda_{y,k} - \frac{\varepsilon}{2}\Big)^+ + \varepsilon \sum_{k=1}^n   \Big[\log(\lambda_{x,k}\lambda_{y,k}) - \log\frac{\varepsilon}{2} \Big]^+.
 \end{align*}
Combining this result with parts (a) and (b) of Lemma~\ref{lemma:lower_bound_KL} implies that the problem~\eqref{eq:optimize_covariance} is equivalent to 
\begin{align*}
    \Upsilon^*=\min_{\lambda_x,\lambda_y>0}~\Big\{\sum_{k=1}^n g_{\varepsilon,\tau, +}(\lambda_{x,k},\lambda_{y,k}; \lambda_{\mu,k},\lambda_{\nu,k}) + \sum_{k=n+1}^m h_{\varepsilon,\tau}(\lambda_{x,k};\lambda_{\mu,k})\Big\},
\end{align*}
with note that $(\lambda_{x,k})_{k=1}^m$ and $(\lambda_{y,k})_{k=1}^n$ are  decreasing sequences. By Lemma \ref{lemma:order_eigenvalues}, each summation term of the above problem can be optimized independently but still preserving the order of $(\lambda_{x,k})_{k=1}^n$ as follows:
 \begin{align*}
      \Upsilon^*=\sum_{k=1}^n \min_{\lambda_{x,k},\lambda_{y,k} >0}&~ g_{\varepsilon,\tau,+}\big(\lambda_{x,k},\lambda_{y,k};\lambda_{\mu,i},\lambda_{\nu,j}\big)+\sum_{k=n+1}^m  \min_{\lambda_{x,k}>0}~h_{\varepsilon,\tau}\big(\lambda_{x,k},\lambda_{\mu,k}\big). 
 \end{align*}
A detailed calculation of~$\Upsilon^{*}$ is deferred to Appendix~\ref{sec:upsilon_calculation}. 
\paragraph{Mass optimization.} Finally, based on equation \eqref{eq:optimize_mass}, the optimal mass is obtained by taking the square root of the minimizer of the following problem: 
\begin{align*}
    m^2_{\pi^*}=\argmin_{x>0}~f(x):=\Upsilon^* x + \varepsilon \KL(x\|m^2_{\mu}m^2_{\nu})+\tau \KL(x\|m^2_{\mu}) + \tau \KL(x\|m^2_{\nu}).
\end{align*}
By part (c) of Lemma~\ref{lemma:maximizer_linear_log}, we obtain $m_{\pi^*}=(x^*)^{\frac{1}{2}}=(m_\mu m_\nu)^{\frac{\tau+\varepsilon}{2\tau + \varepsilon}}\exp\Big\{\frac{-\Upsilon^*}{2(2\tau + \varepsilon)} \Big\}$. Hence, the proof is completed.
\subsection{Proof of Theorem~\ref{theorem:barycenter}}
\label{appendix:barycenter}
Firstly, let $K_{\ell y}$ be the covariance matrix between $X_{\ell}$ and $Y$ while $v$ and $\Sigma_{y}$ be the mean and covariance matrix of $Y$, respectively. By Lemma~\ref{lemma:expectation_factorize}, the objective function in equation \eqref{definition:barycenter} is rewritten as 
\begin{align*}
    \sum_{\ell=1}^T \alpha_{\ell} \mathbb{E}_{\pi_{\ell,y}\otimes\pi_{\ell,y}} \Big\{  \big[\langle X_{\ell},X_{\ell}^{\prime} \rangle - \langle Y, Y^{\prime} \rangle \big]^2 \Big\}=
    \sum_{\ell=1}^{T} \alpha_{\ell} \Big\{\E_{\pi_{\ell,y}\otimes\pi_{\ell,y}}\big[\langle X_\ell, X^{\prime}_{\ell}\rangle - \langle Y-v,Y^{\prime}-v\rangle\big]^2+2v^{\top}\Sigma_y v + \|v\|^4\Big\}.
\end{align*}
As $\Sigma_y$ is a positive definite matrix, we have $2v^{\top}\Sigma_y v + \|v\|^4>0$ when $v\neq \zeros_d$. Therefore, the barycenter $\mu^*$ must have mean zero. Next, we denote by $\lambda_{\ell,i}$ and $\lambda_{y,i}$ the $i$th largest eigenvalues of $\Sigma_{\ell}$ and $\Sigma_{y}$, respectively. By using the same arguments for deriving equation~\eqref{eq:first_equality_balanced}, we get
\begin{align*}
    \sum_{\ell=1}^T \alpha_{\ell} \mathbb{E}_{\pi_{\ell,y}\otimes\pi_{\ell,y}} \Big\{  \big[\langle X_{\ell},X_{\ell}^{\prime} \rangle - \langle Y, Y^{\prime} \rangle \big]^2 \Big\}=\sum_{\ell=1}^{T} \alpha_{\ell} \Big\{\sum_{i=1}^{d} \lambda_{y,i}^2 - 2 \tr\big(K_{\ell y}^{\top}K_{\ell y} \big)+ \sum_{j=1}^{m_{\ell}} \lambda_{\ell,j}^2\Big\}.
\end{align*}
According to parts (a) and (c) of Lemma \ref{lemma:entropic_gw:inner_product:closed_form:supporting}, we have
\begin{align*}
    \tr(K_{\ell y}^{\top} K_{\ell y}) \leq   \sum_{j=1}^{\min\{d,m_{\ell}\}} \lambda_{y,j} \lambda_{\ell,j},
\end{align*}
where the equality happens when $(X_{\ell})_j = \sqrt{\lambda_{\ell,j}} Z_j$ and 
$ Y_{j} = \sqrt{\lambda_{y,j}} Z_j $ with $(Z_j)_j$ are i.i.d.  standard normal random variables. Thus, the problem now is reduced to minimize 
\begin{align*}
    \sum_{\ell =1}^{T} \alpha_{\ell} \Big\{\sum_{j=1}^d \lambda_{y,j}^2 - 2\sum_{j=1} ^{\min\{d, m_{\ell}\}} \lambda_{y,j} \lambda_{\ell,j}  \Big\} =\sum_{j=1}^d \Big\{ \lambda_{y,j}^2 - 2\lambda_{y,j} \sum_{\ell=1}^{T} \alpha_{\ell}\lambda_{\ell,j} \mathbf{1}_{j\leq m_{\ell}}\Big\}.
\end{align*}
This quantity has a quadratic form, so it attains its minimum at
\begin{align*}
    \lambda_{y,j} = \sum_{\ell=1}^{T} \alpha_{\ell} \lambda_{\ell,j} \mathbf{1}_{j\leq m_{\ell}}; \quad j\in[d].
\end{align*}
Hence, we obtain the conclusion of the theorem.
\subsection{Proof of Theorem~\ref{theorem:entropic_barycenter}}
\label{appendix:entropic_barycenter}
Firstly, we will show that the barycenter $\mu^*$ has mean zero. Let $K_{\ell y}$ be the covariance matrix between $X_{\ell}$ and $Y$ while $v$ and $\Sigma_{y}$ be the mean and covariance matrix of $Y$, respectively. By Lemma~\ref{lemma:expectation_factorize}, the objective function in equation~\eqref{definition:entropic_barycenter} can be rewritten as
\begin{align*}
    &\sum_{\ell=1}^{T} \alpha_{\ell} \Big\{ \mathbb{E}_{\pi_{\ell,y}\otimes\pi_{\ell,y}}  \big[\langle X_{\ell},X_{\ell}^{\prime} \rangle - \langle Y, Y^{\prime} \rangle \big]^2+ \varepsilon \KL(\pi_{\ell,y} \| \mu_{\ell}\otimes \mu)\Big\}\\
    =&\sum_{\ell=1}^{T} \alpha_{\ell} \Big\{\E_{\pi_{\ell,y}\otimes\pi_{\ell,y}}\big[\langle X_\ell, X^{\prime}_{\ell}\rangle - \langle Y-v,Y^{\prime}-v\rangle\big]^2+2v^{\top}\Sigma_y v + \|v\|^4\Big\}+\varepsilon \KL(\pi_{\ell,y} \| \mu_{\ell}\otimes \mu).
\end{align*}
Note that $\pi_{\ell,y}$ and $\mu_\ell\otimes\mu$ share the same mean and $\KL$ divergence is invariant to translation, which make the quantity $\KL(\pi_{\ell,y}\|\mu_{\ell}\otimes\mu)$ become independent of the mean vector of $\pi_{\ell,y}$. Additionally, as $\Sigma_y$ is a positive definite matrix, we have $2v^{\top}\Sigma_y v + \|v\|^4>0$ when $v\neq \zeros_d$. Therefore, the barycenter $\mu*$ must have mean zero. Moreover, according to Lemma~\ref{lemma:KL_conditional_gaussian_min}, there exists a Gaussian solution, says $\mu^*$. Then, the objective function is reduced to
\begin{align*}
    \sum_{\ell=1}^{T} \alpha_{\ell} \Big\{\sum_{i=1}^{d} \lambda_{y,i}^2 - 2 \tr\big(K_{\ell y}^{\top}K_{\ell y} \big)+ \sum_{j=1}^{m_{\ell}} \lambda_{\ell,j}^2\Big\}+\varepsilon \KL(\pi_{\ell,y} \| \mu_{\ell}\otimes \mu).
\end{align*}
Denote by $\Sigma_{\ell,y}$ and $\Sigma_{\ell \otimes y}$ the covariance matrices of $\pi_{\ell,y}$ and $ \mu_{\ell}\otimes \mu$, respectively. The entropic term in the above equation is then equal to
\begin{align*}
    \frac{\varepsilon}{2} \left\{ \tr(\Sigma_{\ell,y} \Sigma_{\ell\otimes y}^{-1}) + \log \Big(\frac{\det(\Sigma_{\ell\otimes y})}{\det (\Sigma_{\ell,y})} \Big) - (m_\ell + d) \right\} = \frac{\varepsilon}{2}  \log\left(\frac{\det (\Sigma_{\ell\otimes y})}{\det (\Sigma_{\ell,y})} \right).
\end{align*}
The problem is thus reduced to minimize
\begin{align}
    \sum_{j=1}^d \lambda_{y,j}^2 -2\sum_{\ell=1}^{T}  \alpha_{\ell}\tr\big(K_{\ell y}^{\top}K_{\ell y}\big)+\frac{1}{2} \sum_{\ell=1}^{T} \alpha_{\ell}\varepsilon \log\left(\frac{\det (\Sigma_{\ell \otimes y})}{\det (\Sigma_{\ell,y})} \right).\label{eq:entropic_IGW_p1}
\end{align}
Use the same approach of Lemma \ref{lemma:entropic_gw:inner_product:closed_form:supporting}, let $U_{\ell y} \Lambda_{\ell y}^{\frac{1}{2}} V_{\ell y}^{\top}$ be the SVD of matrix $\Sigma^{-\frac{1}{2}}_{\ell}K_{\ell y}\Sigma^{-\frac{1}{2}}_{y}$, or equivalently,
\begin{align*}
    K_{\ell y} = \Sigma^{\frac{1}{2}}_{\ell} U_{\ell y} \Lambda_{\ell y}^{\frac{1}{2}} V_{\ell y}^{\top} \Sigma^{\frac{1}{2}}_y.
\end{align*}
Denote by $\kappa^{\frac{1}{2}}_{\ell y,j}$ the $j$-th largest singular value of $\Sigma^{-\frac{1}{2}}_{\ell}K_{\ell y}\Sigma^{-\frac{1}{2}}_{y}$. Then, we find that 
\begin{align*}
  \log\left(\frac{\det (\Sigma_{\ell\otimes y})}{\det (\Sigma_{\ell,y})} \right) = - \sum_{j=1}^{d_{\ell}} \log(1-\kappa_{\ell y,j}).
\end{align*}
By applying the von Neumann's trace inequality \citep{kristof1969neumann,horn_johnson_1991}, we have
\begin{align*}
    \tr(K_{\ell y}^{\top}K_{\ell y}) =& \tr\Big(\Sigma^{\frac{1}{2}}_y V_{\ell y} (\Lambda_{\ell y}^{\frac{1}{2}})^{\top} U_{\ell y}^{\top} \Sigma_{\ell} U_{\ell y} \Lambda_{\ell y} V_{\ell y}^{\top}   \Sigma^{\frac{1}{2}}_y \Big) 
    = \tr\Big( [V_{\ell y} (\Lambda_{\ell y}^{\frac{1}{2}})^{\top}]\times [U_{\ell y}^{\top} \Sigma_{\ell }]\times  [U_{\ell y} \Lambda_{\ell y}^{\frac{1}{2}}] \times [V_{\ell y}^{\top} \Sigma_y] \Big) \\
    \leq &\sum_{j=1}^{d_{\ell}} \lambda_{\ell,j} \lambda_{y,j} \kappa_{\ell y,j},
\end{align*}
with a note that all $\Lambda_{\ell y}$, $\Sigma_y$ and $\Sigma_{\ell}$ are diagonal matrices. The equality occurs when $V_{\ell y}$ and $U_{\ell y}$ are identity matrices. 

Put the above results together, the problem in equation \eqref{eq:entropic_IGW_p1} is reduced to minimizing
\begin{align*}
    \sum_{j=1}^d \lambda_{y,j}^2 - \sum_{\ell=1}^{T} 2\alpha_{\ell} \sum_{t=1}^{d_\ell} \lambda_{\ell,t}\kappa_{\ell y,t} \lambda_{y,t} - \frac{\varepsilon}{2} \sum_{\ell=1}^{T} \alpha_{\ell} \sum_{t=1}^{d_\ell} \log(1-\kappa_{\ell y,t}), 
\end{align*}
or equivalently,
\begin{align*}
    \sum_{j=1}^d \Big\{ \lambda_{y,j}^2 - 2\lambda_{y,j} \sum_{\ell=1}^{T} \alpha_{\ell} \lambda_{\ell,j} \kappa_{\ell y,j} \mathbf{1}_{j\leq d_{\ell}}\Big\}- \frac{\varepsilon}{2} \sum_{\ell=1}^{T} \alpha_{\ell} \sum_{t=1}^{d_\ell}\log(1 - \kappa_{\ell y,t}).
\end{align*}
By fixing the values of $\kappa_{\ell y,t}\in[0,1]$ for all $t\in[d_\ell]$, this quantity attains its minimum at 
\begin{align*}
    \lambda_{y,j} = \sum_{\ell=1}^{T} \alpha_{\ell} \lambda_{\ell,j} \kappa_{\ell y,j} \mathbf{1}_{j\leq d_{\ell}}, \quad j\in [d].
\end{align*}
Therefore, the problem of entropic IGW barycenter is equivalent to finding the maximum of 
\begin{align*}
   & \sum_{j=1}^d \Big[\sum_{\ell=1}^{T} \alpha_{\ell} \lambda_{\ell,j} \kappa_{\ell y,j} \mathbf{1}_{j\leq d_{\ell}}\Big]^2 + \frac{\varepsilon}{2}\sum_{\ell=1}^{T}  \sum_{j=1}^{d_{\ell}} \alpha_{\ell} \log(1 - \kappa_{\ell y,j})\\
&    = \sum_{j=1}^d \Big\{\Big[\sum_{\ell=1}^{T} \alpha_{\ell} \lambda_{\ell,j} \kappa_{\ell y,j} \mathbf{1}_{j\leq d_{\ell}}\Big]^2+ \frac{\varepsilon}{2} \sum_{\ell=1}^{T} \alpha_{\ell} \log(1-\kappa_{\ell y,j}) \mathbf{1}_{j\leq d_{\ell}}\Big\}.
\end{align*}
We need to maximize each term, which is 
\begin{align*}
     \Big[\sum_{\ell=1}^{T} \alpha_{\ell} \lambda_{\ell,j} \kappa_{\ell y,j} \mathbf{1}_{j\leq d_{\ell}}\Big]^2 +  \sum_{\ell=1}^{T} \frac{\varepsilon}{2} \alpha_{\ell} \log(1 - \kappa_{\ell y,j}) \mathbf{1}_{j\leq d_{\ell}},
\end{align*}
under the constraints that $0\leq \kappa_{\ell y,j}\leq 1$, for all $j\in[d_\ell]$. By Lemma \ref{lemma:minimization_square_log} in Appendix~\ref{sec:auxiliary_results}, and when $\varepsilon$ satisfying the condition~\eqref{condition:varepsilon_barycenter} for $j\leq \min \{d,m_{\ell}\}$, we have
\begin{align*}
    \kappa_{\ell y,j} = 1 - \frac{\varepsilon  }{2 \lambda_{\ell,j} \Big\{ A_{j} + \sqrt{ A_{j}^2 - \varepsilon  B_j }\Big\} },
\end{align*}
where $A_{j} = \sum_{\ell=1}^{T} \alpha_{\ell} \lambda_{\ell,j} \mathbf{1}_{j\leq d_{\ell}}$ and $B_j = \sum_{\ell=1}^{T} \alpha_{\ell} \mathbf{1}_{j\leq d_{\ell}}$.
Hence, we reach the conclusion of the theorem.
\subsection{Proof of Lemma \ref{lemma:KL_conditional_gaussian_min}}
\label{appendix:KL_conditional_gaussian_min}
Let $Q_{X,Y}=\mathcal{N}([\gamma_x,\gamma_y]^{\top},\Sigma_{X,Y})$ and $Q_Y=\mathcal{N}(\gamma_y,\Sigma_y)$ be Gaussian distributions in $\br^{m+n}$ and $\br^n$, respectively. Denote by $q_X, q_Y, q_{X,Y}, p_Y$ and $p_{X,Y}$ the probability density functions of distributions $Q_X, Q_Y, Q_{X,Y}, P_Y$ and $P_{X,Y}$, respectively, we need to show that
\begin{align*}
    \mathsf{KL}\big(P_{X,Y} \|Q_X \otimes P_Y \big) \geq \mathsf{KL} \big(Q_{X,Y}\|Q_X\otimes Q_Y \big).
\end{align*}
Expanding the KL divergence, we need to prove
\begin{align*}
    \int p_{X,Y}(x,y) \big[\log p_{X,Y}(x,y) - \log q_{X}(x) - \log p_Y(y)\big] dx dy \geq \int q_{X,Y}(x,y) \big[\log q_{X,Y}(x,y)- \log q_X(x) -  \log q_Y(y) \big]  dx dy.
\end{align*}
Similar to the derivation for equation~\eqref{eq:expectation_log} in the proof Lemma \ref{lemma:min_KL_Gaussian} with a note that $q_{X,Y}$, $q_X$ and $q_Y$ are Gaussian distributions, we have
\begin{align*}
    \int p_{X,Y}(x,y) \log q_X(x) dx dy &= \int q_{X,Y}(x,y) \log q_X(x) dx dy;\\
    \int p_{X,Y}(x,y) \log q_Y(y) dx dy &= \int q_{X,Y}(x,y) \log q_Y(y) dx dy;\\
    \int p_{X,Y}(x,y) \log q_{X,Y}(x,y) dx dy &= \int q_{X,Y}(x,y) \log q_{X,Y}(x,y) dx dy,
\end{align*}
since both sides are equal to  the same function of the covariance matrices of $P_{X,Y}$ and $Q_{X,Y}$, which are both equal to $\Sigma_{X,Y}$. Hence, the inequality is equivalent to
\begin{align*}
    \int p_{X,Y}(x,y) \big[\log p_{X,Y}(x,y) - \log p_Y(y)\big]dxdy &\geq \int p_{X,Y}(x,y) \big[\log q_{X,Y}(x,y) - \log q_{Y}(y) \big]dx dy\\
    \Leftrightarrow \int p_{X,Y}(x,y) \log \frac{p_{X,Y}(x,y)/p_Y(y)}{q_{X,Y}(x,y)/q_{Y}(y)} dx dy &\geq 0.
\end{align*}
By using the formula for conditional distributions, we get 
\begin{align*}
    p_{X,Y}(x,y) &= p_Y(y) p_{X|Y}(x|y);\\
    q_{X,Y}(x,y) & = q_Y(y) q_{X|Y}(x|y).
\end{align*}
Then the left hand side is equal to
\begin{align*}
    \int p_Y(y) p_{X|Y}(x|y) \log \frac{p_{X|Y}(x|y)}{q_{X|Y}(x|y)} dx dy &= \int_y  p_Y(y) \Big[\int_x p_{X|Y}(x|y) \log \frac{p_{X|Y}(x|y)}{q_{X|Y}(x|y)} dx\Big] dy \\
    &=\int_y p_Y(y)\mathsf{KL}\big(P_{X|Y} \| Q_{X|Y}\big) dy
\end{align*}
which is not less than zero, since the KL term is non-negative. The inequality becomes equality when $P_{X|Y} = Q_{X|Y}$. 
\section{On detailed calculation of $\Upsilon^*$}
\label{sec:upsilon_calculation}
In this appendix, we discuss how to derive a closed-form formulation for $\Upsilon^{*}$ in Theorem~\ref{theorem:UGW}. It is equivalent to finding the minimizers of functions $g_{\varepsilon,\tau,+}$ and $h_{\varepsilon,\tau}$ in Lemma~\ref{lemma:minimizer_g_plus} and Lemma~\ref{lemma:minimizer_h}, respectively. 
\begin{lemma}[Minimizer of $g_{\varepsilon,\tau,+}$]\label{lemma:minimizer_g_plus}
Let 
\begin{align*}
    g_{\varepsilon,\tau,+}(x,y;a,b) := x^2 + y^2  + (\tau + \varepsilon) \Big(\frac{x}{a} + \frac{y}{b} - \log\Big(\frac{xy}{ab} \Big) - 2 \Big) - 2\Big[ xy - \frac{\varepsilon}{2}\Big]^+  + \varepsilon \Big[\log(xy) - \log\Big(\frac{\varepsilon}{2}\Big) \Big]^+.
\end{align*}
We define 
\begin{align*}
    g_{\varepsilon,\tau,-1}(x,y;a,b) &:= x^2 + y^2 + (\tau+\varepsilon) \Big[\frac{x}{a} + \frac{y}{b} - \log\Big(\frac{xy}{ab}\Big)-2  \Big], \\
    g_{\varepsilon,\tau,1}(x,y;a,b) &:= x^2 + y^2 + (\tau+\varepsilon) \Big[\frac{x}{a} + \frac{y}{b} - \log\Big(\frac{xy}{ab}\Big)-2 \Big] - 2(xy- \frac{\varepsilon}{2}) + \varepsilon\Big[\log(xy) - \log\Big(\frac{\varepsilon}{2}\Big) \Big], \\
    (\widetilde{x},\widetilde{y}) &:= \argmin_{x,y>0}~ g_{\varepsilon,\tau,-1}(x,y;a,b), \\
    (\widehat{x},\widehat{y})&:= \argmin_{x,y>0}~ g_{\varepsilon,\tau,1}(x,y;a,b), \\
    (x^*,y^*) &:=\argmin_{x,y>0}~g_{\varepsilon,\tau,+}(x,y,;a,b).
\end{align*}
Then if $\widetilde{x} \widetilde{y} < \frac{\varepsilon}{2}$, then $(x^*,y^*) =(\widetilde{x},\widetilde{y}) $ where solutions are in Lemma \ref{lemma:minimizer_g_neg1}, and if otherwise, then $(x^*,y^*) = (\widehat{x},\widehat{y})$, where solutions are in Lemma \ref{lemma:minimizer_g_plus1}.
\end{lemma}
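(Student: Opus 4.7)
The plan is to exploit the piecewise structure of $g_{\varepsilon,\tau,+}$. Let $D_- := \{(x,y) \in (0,\infty)^2 : xy < \varepsilon/2\}$ and $D_+ := \{(x,y) \in (0,\infty)^2 : xy \geq \varepsilon/2\}$. Directly from the definitions of the positive brackets, $g_{\varepsilon,\tau,+}$ equals $g_{\varepsilon,\tau,-1}$ on $D_-$ and equals $g_{\varepsilon,\tau,1}$ on $D_+$. Writing the deviation as $\Delta(u) := -2(u-\varepsilon/2) + \varepsilon\log(2u/\varepsilon)$, a direct computation gives $\Delta(\varepsilon/2) = 0$ and $\Delta'(\varepsilon/2) = -2 + \varepsilon/(\varepsilon/2) = 0$, so the two pieces agree in both value and gradient along the interface $xy = \varepsilon/2$. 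Together with coercivity (the $x^2+y^2$ term dominates at infinity while the $-\log$ terms blow up near the axes), this makes $g_{\varepsilon,\tau,+}$ a $C^1$ coercive function on $(0,\infty)^2$ whose global minimum $(x^*, y^*)$ is attained at an interior critical point.

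The crucial step is to show that $\widetilde{x}\widetilde{y}$ and $\widehat{x}\widehat{y}$ always lie on the same side of $\varepsilon/2$. I would interpolate between the two pieces via $g_t(x,y) := g_{\varepsilon,\tau,-1}(x,y) + t\,\Delta(xy)$ for $t \in [0,1]$, so that $g_0 = g_{\varepsilon,\tau,-1}$ and $g_1 = g_{\varepsilon,\tau,1}$. A short Hessian computation shows $g_t$ has diagonal entries $2 + (\tau+(1-t)\varepsilon)/x^2$ and $2 + (\tau+(1-t)\varepsilon)/y^2$ with off-diagonal $-2t$, and determinant $4(1-t^2) + 2\beta/x^2 + 2\beta/y^2 + \beta^2/(x^2y^2)$ where $\beta := \tau + (1-t)\varepsilon > 0$; this is strictly positive for every $t \in [0,1]$. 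So each $g_t$ is strictly convex and coercive, hence has a unique interior minimizer $(x_t, y_t)$ depending continuously on $t$ by the implicit function theorem. Writing $u(t) := x_t y_t$, if $u(t_0) = \varepsilon/2$ for some $t_0$ then $\nabla \Delta(xy) = \Delta'(xy)(y,x)$ vanishes at $(x_{t_0}, y_{t_0})$, forcing $\nabla g_{\varepsilon,\tau,-1}(x_{t_0}, y_{t_0}) = 0$ and hence $(x_{t_0}, y_{t_0}) = (\widetilde{x},\widetilde{y})$ together with $\widetilde{x}\widetilde{y} = \varepsilon/2$. By contraposition, whenever $\widetilde{x}\widetilde{y} \neq \varepsilon/2$ the continuous path $u(t)$ never crosses $\varepsilon/2$, so $\widetilde{x}\widetilde{y}$ and $\widehat{x}\widehat{y}$ share the same side.

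The two cases of the lemma now follow immediately. If $\widetilde{x}\widetilde{y} < \varepsilon/2$, then also $\widehat{x}\widehat{y} < \varepsilon/2$, so $(\widehat{x},\widehat{y})$ sits in $D_-$ where $g_{\varepsilon,\tau,+}$ locally equals $g_{\varepsilon,\tau,-1}$, not $g_{\varepsilon,\tau,1}$; since $\nabla g_{\varepsilon,\tau,-1}(\widehat{x},\widehat{y}) \neq 0$, the point $(\widehat{x},\widehat{y})$ is not critical for $g_{\varepsilon,\tau,+}$, leaving $(\widetilde{x},\widetilde{y}) \in D_-$ as the unique interior critical point and hence as $(x^*, y^*)$. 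The case $\widetilde{x}\widetilde{y} \geq \varepsilon/2$ is entirely symmetric and yields $(x^*,y^*) = (\widehat{x},\widehat{y})$ (with the boundary case $\widetilde{x}\widetilde{y} = \varepsilon/2$ giving $(\widetilde{x},\widetilde{y}) = (\widehat{x},\widehat{y})$ trivially from the step above). The main obstacle is precisely the interpolation argument: without ruling out the ``mixed'' configuration in which $(\widetilde{x},\widetilde{y}) \in D_-$ and $(\widehat{x},\widehat{y}) \in D_+$ would be simultaneous local minima of $g_{\varepsilon,\tau,+}$, one could not resolve the lemma, because $g_{\varepsilon,\tau,1} \leq g_{\varepsilon,\tau,-1}$ holds pointwise and would make $(\widehat{x},\widehat{y})$ strictly beat $(\widetilde{x},\widetilde{y})$, contradicting the stated dichotomy in the $\widetilde{x}\widetilde{y} < \varepsilon/2$ case.
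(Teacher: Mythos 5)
Your argument is correct, but it reaches the key intermediate claim --- that $\widetilde{x}\widetilde{y}$ and $\widehat{x}\widehat{y}$ always lie on the same side of $\varepsilon/2$ --- by a genuinely different route than the paper. The paper stays entirely at the level of function values: it notes $g_{\varepsilon,\tau,-1}\ge g_{\varepsilon,\tau,1}$ pointwise, rules out the ``mixed'' configuration by intersecting the segment joining the two minimizers with the hyperbola $xy=\varepsilon/2$ and deriving a contradiction from convexity of $g_{\varepsilon,\tau,1}$, and then in each case bounds $g_{\varepsilon,\tau,+}$ from below by its value at the claimed minimizer separately on $\{xy<\varepsilon/2\}$ and $\{xy\ge\varepsilon/2\}$ (again via a segment-and-convexity argument). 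You instead observe that the two pieces glue to a $C^1$ function because $\Delta(\varepsilon/2)=\Delta'(\varepsilon/2)=0$, deform $g_{\varepsilon,\tau,-1}$ into $g_{\varepsilon,\tau,1}$ through the strictly convex family $g_t$, and use continuity of the minimizer path $t\mapsto(x_t,y_t)$ plus the fact that the interface is exactly the zero set of $\Delta'$ to forbid a crossing; you then conclude by counting critical points of the coercive $C^1$ function $g_{\varepsilon,\tau,+}$. The paper's version is more elementary (no implicit function theorem, no differentiability of $g_{\varepsilon,\tau,+}$ needed, and no existence/coercivity check for the intermediate $g_t$, which your route does require and which you correctly verify via the Hessian computation); your version buys a cleaner structural explanation of why the dichotomy is governed by the position of $\widetilde{x}\widetilde{y}$ alone, an explicit treatment of the boundary case $\widetilde{x}\widetilde{y}=\varepsilon/2$ (where $(\widetilde{x},\widetilde{y})=(\widehat{x},\widehat{y})$), and it avoids the strict-inequality looseness in the paper's inequality \eqref{inequality:g}. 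Both proofs are sound.
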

\begin{proof}[Proof of Lemma \ref{lemma:minimizer_g_plus}]
 By definition of functions $g_{\varepsilon,\tau,1}$, $g_{\varepsilon,\tau,-1}$ and $g_{\varepsilon,\tau,+}$, we have
\begin{align*}
    g_{\varepsilon,\tau,+}(x,y;a,b) = g_{\varepsilon,\tau,-1}(x,y;a,b) \mathbf{1}_{\{xy < \frac{\varepsilon}{2}\}} + g_{\varepsilon,\tau,1}(x,y;a,b) \mathbf{1}_{\{xy \geq \frac{\varepsilon}{2}\}}.
\end{align*}
We observe that both functions $g_{\varepsilon,\tau,1}$ and $g_{\varepsilon,\tau,-1}$ are convex functions. Moreover, we have
\begin{align*}
    g_{\varepsilon,\tau,-1}(x,y;a,b) - g_{\varepsilon,\tau,1}(x,y;a,b) &= 2\Big(xy - \frac{\varepsilon}{2}\Big) - \varepsilon\Big[ \log(xy) - \log\Big(\frac{\varepsilon}{2}\Big)\Big]\\
    &= \varepsilon \Big\{ \frac{xy}{\varepsilon /2} - 1 - \log\Big(\frac{xy}{\varepsilon/2}\Big) \Big\} \geq 0.
\end{align*}
It means that $g_{\varepsilon,\tau,-1}(x,y;a,b) \geq g_{\varepsilon,\tau,1}(x,y;a,b)$.

Next, we prove that   $\frac{\varepsilon}{2}$ cannot lie between  $\widehat{x} \widehat{y}$ and  $\widetilde{x} \widetilde{y}$. Assume the contradictory, then the segment connecting $(\widetilde{x},\widetilde{y})$ to $(\widehat{x},\widehat{y})$ cuts the hyperpole $xy = \frac{\varepsilon}{2}$ at a point with coordinates denoted by $(\overline{x},\overline{y})$. Since $g_{\varepsilon,\tau,1}$ is a convex function, we have 
\begin{align}
    \max \Big\{g_{\varepsilon,\tau,1}(\widehat{x},\widehat{y};a,b), g_{\varepsilon,\tau,1}(\widetilde{x}, \widetilde{y};a,b) \Big\} > g_{\varepsilon,\tau,1}(\overline{x},\overline{y};a,b). \label{inequality:g}
\end{align}
Moreover, we find that
\begin{align*}
    g_{\varepsilon,\tau,1}(\overline{x},\overline{y};a,b) &\geq g_{\varepsilon,\tau,1}(\widehat{x},\widehat{y};a,b) \\
    g_{\varepsilon,\tau,1}(\overline{x},\overline{y};a,b) &= g_{\varepsilon,\tau,-1}(\overline{x},\overline{y};a,b)\geq g_{\varepsilon,\tau,-1}(\widetilde{x},\widetilde{y};a,b)\geq g_{\varepsilon,\tau,1}(\widetilde{x},\widetilde{y};a,b).
\end{align*}
It contradicts inequality \eqref{inequality:g}. Thus, both $\widehat{x} \widehat{y}$ and $\widetilde{x} \widetilde{y}$ have to be greater or smaller than  $\frac{\varepsilon}{2}$ at the same time. Given that, we have two separate cases:

\vspace{0.5 em}
\noindent
\textbf{Case 1}: They are both greater than $\frac{\varepsilon}{2}$, then 
\begin{align*}
    g_{\varepsilon,\tau,-1}(x,y;a,b) \geq g_{\varepsilon,\tau,-1}(\widetilde{x},\widetilde{y};a,b) \geq g_{\varepsilon,\tau,1}(\widetilde{x},\widetilde{y};a,b) \geq g_{\varepsilon,\tau,1}(\widehat{x},\widehat{y};a,b).
\end{align*}
It means that $\widehat{x} = x^*$ and $\widehat{y} = y^*$.

\vspace{0.5 em}
\noindent
\textbf{Case 2}: Both of them are smaller than $\frac{\varepsilon}{2}$. For any $(x,y)$ such that $xy \geq \frac{\varepsilon}{2}$, there exists $\big(\overline{x},\overline{y}\big)$ lies in the segment connecting $(x,y)$ to $(\widehat{x},\widehat{y})$ such that $\overline{x}~\overline{y} = \frac{\varepsilon}{2}$. Since the function $g_{\varepsilon,\tau,1}$ is convex, we find that
\begin{align*}
    t g_{\varepsilon,\tau,1}(\widehat{x},\widehat{y};a,b) + (1-t) g_{\varepsilon,\tau,1}(x,y;a,b) \geq g_{\varepsilon,\tau,1}(\overline{x},\overline{y};a,b),
\end{align*}
where $t = \frac{\|(\widehat{x},\widehat{y}) - (\overline{x},\overline{y})\|_2}{\|(x,y) - (\widehat{x},\widehat{y})\|_2}$.
However, $g_{\varepsilon,\tau,1}(\widehat{x},\widehat{y};a,b) \leq g_{\varepsilon,\tau,1}(\overline{x},\overline{y};a,b)$, which implies that
\begin{align*}
    g_{\varepsilon,\tau,1}(x,y;a,b) \geq g_{\varepsilon,\tau,1}(\overline{x},\overline{y};a,b) = g_{\varepsilon,\tau,-1}(\overline{x},\overline{y};a,b) \geq g_{\varepsilon,\tau,-1}(\widetilde{x},\widetilde{y};a,b).
\end{align*}
It means that $\widetilde{x} = x^*$ and $\widetilde{y} = y^*$. As a consequence, we obtain the conclusion of the lemma.
\end{proof}

\begin{lemma}[Minimizer of function $g_{\varepsilon,\tau,-1}$] \label{lemma:minimizer_g_neg1}
\label{lemma:ugw:property_of_f^2_j}
With $a,b> 0$, let
\begin{equation*}
    g_{\varepsilon,\tau,-1}(x,y;a,b) = x^2+y^2+
    (\tau+\varepsilon) \left(\dfrac{x}{a}+\dfrac{y}{b} - \log(xy)-\log(ab)\right)
\end{equation*}
subjected to $x,y > 0$. Let $(\widetilde{x},\widetilde{y}) = \argmin_{x,y>0} g_{\varepsilon,\tau,-1}(x,y;a,b)$. Then, 
\begin{equation*}
\widetilde{x} = -\dfrac{\tau+\varepsilon}{4a} + \dfrac{1}{2}\sqrt{2(\tau + \varepsilon)+ \dfrac{(\tau+\varepsilon)^2}{4a^2}}, \quad \widetilde{y} = -\dfrac{\tau+\varepsilon}{4b} + \dfrac{1}{2}\sqrt{2(\tau +\varepsilon)+ \dfrac{(\tau+\varepsilon)^2}{4b^2}}.
\end{equation*}
\end{lemma}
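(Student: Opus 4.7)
The plan is to exploit the fact that $g_{\varepsilon,\tau,-1}$ separates completely in $x$ and $y$ once we expand $\log(xy) = \log x + \log y$. Specifically, I would write
\begin{align*}
g_{\varepsilon,\tau,-1}(x,y;a,b) = h_a(x) + h_b(y) - (\tau+\varepsilon)\log(ab),
\end{align*}
where $h_c(z) := z^2 + (\tau+\varepsilon)\bigl(z/c - \log z\bigr)$ for $c>0$. Because the constant term and the two single-variable pieces are uncoupled, minimization over $(x,y) \in (0,\infty)^2$ reduces to the two independent problems $\min_{z>0} h_a(z)$ and $\min_{z>0} h_b(z)$.

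Next, I would check that $h_c$ is strictly convex on $(0,\infty)$: $z \mapsto z^2$ is strictly convex, $z \mapsto (\tau+\varepsilon) z/c$ is linear (hence convex), and $z \mapsto -(\tau+\varepsilon)\log z$ is strictly convex since $\tau+\varepsilon > 0$. Moreover, $h_c(z) \to +\infty$ both as $z \to 0^+$ (from $-\log z$) and as $z \to +\infty$ (from $z^2$), so a unique global minimizer exists in $(0,\infty)$ and is characterized by the first-order condition.

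The first-order condition $h_c'(z) = 0$ reads
\begin{align*}
2z + \frac{\tau+\varepsilon}{c} - \frac{\tau+\varepsilon}{z} = 0,
\end{align*}
which, after multiplying by $z$, becomes the quadratic
\begin{align*}
2z^2 + \frac{\tau+\varepsilon}{c}\, z - (\tau+\varepsilon) = 0.
\end{align*}
The quadratic formula gives two real roots (the discriminant $(\tau+\varepsilon)^2/c^2 + 8(\tau+\varepsilon)$ is strictly positive), and since the product of the roots is $-(\tau+\varepsilon)/2 < 0$, exactly one root is positive. Selecting that root yields
\begin{align*}
\tilde{z}(c) = -\frac{\tau+\varepsilon}{4c} + \frac{1}{2}\sqrt{2(\tau+\varepsilon) + \frac{(\tau+\varepsilon)^2}{4c^2}}.
\end{align*}
Applying this with $c = a$ and $c = b$ produces the claimed expressions for $\tilde{x}$ and $\tilde{y}$.

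There is no substantive obstacle here: the lemma is essentially bookkeeping around a separable strictly convex function and a single quadratic equation. The only minor thing to be careful about is sign selection in the quadratic formula, which is handled automatically by the constraint $z>0$ and the fact that the two roots have opposite signs.
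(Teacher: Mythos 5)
Your proof is correct and follows essentially the same route as the paper's: verify convexity, set the first-order conditions to zero, and solve the resulting quadratic for the positive root. Your explicit observation that the function separates into independent one-variable problems, together with the coercivity argument guaranteeing existence of the minimizer, is a slightly more careful packaging of the same computation.
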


\begin{proof}[Proof of Lemma \ref{lemma:ugw:property_of_f^2_j}]
1. Taking the derivatives of $g_{\varepsilon,\tau,-1}$ with respect to $x$ and $y$, we get
\begin{equation*}
    \dfrac{\partial g_{\varepsilon,\tau,-1}}{\partial x} = 2x + \dfrac{\tau+\varepsilon}{a} - \dfrac{\tau+\varepsilon}{x}, \quad \dfrac{\partial g_{\varepsilon,\tau,-1}}{\partial y} = 2y + \dfrac{\tau+\varepsilon}{b} - \dfrac{\tau+\varepsilon}{y}.
\end{equation*}
It follows that
\begin{equation*}
    \dfrac{\partial^2 g_{\varepsilon,\tau,-1}}{\partial x^2} = 2 + \dfrac{\tau+\varepsilon}{x^2}, \quad \dfrac{\partial^2 g_{\varepsilon,\tau,-1}}{\partial x\partial y} = 0, \quad  \dfrac{\partial^2 g_{\varepsilon,\tau,-1}}{\partial y^2} = 2 + \dfrac{\tau+\varepsilon}{y^2}.
\end{equation*}
The Hessian matrix of function $g_{\varepsilon,\tau,-1}$ is positive definite, so $g_{\varepsilon,\tau,-1}$ is a convex function. 

2. Solving the equations $\dfrac{\partial g_{\varepsilon,\tau,-1}}{\partial x} = 0$ and $\dfrac{\partial g_{\varepsilon,\tau,-1}}{\partial y} = 0$, we have
\begin{equation*}
\widetilde{x} = -\dfrac{\tau+\varepsilon}{4a} + \dfrac{1}{2}\sqrt{2(\tau +\varepsilon) + \dfrac{(\tau+\varepsilon)^2}{4a^2}}, \quad \tilde{y} = -\dfrac{\tau+\varepsilon}{4b} + \dfrac{1}{2}\sqrt{2(\tau +\varepsilon) + \dfrac{(\tau+\varepsilon)^2}{4b^2}}.
\end{equation*}
Therefore, we reach the conclusion of the lemma.
\end{proof}

\begin{lemma}[Minimizer of function $g_{\varepsilon,\tau,1}$]
\label{lemma:minimizer_g_plus1}
For $a,b>0$, let
\begin{equation*}
    g_{\varepsilon,\tau,1}(x,y;a,b) = x^2+y^2+\varepsilon\Big[\log(xy)-\log\Big(\frac{\varepsilon}{2}\Big)\Big]  -2\Big(xy-\frac{\varepsilon}{2}\Big)
    +(\tau+\varepsilon) \left[\dfrac{x}{a}+\dfrac{y}{b} - \log\Big(\frac{xy}{ab}\Big)-2\right],
\end{equation*}
subjected to $x,y > 0$. Define $(\widehat{x},\widehat{y}) = \argmin_{x,y>0} g_{\varepsilon,\tau,1}(x,y;a,b) $. Then, the value of $(\widehat{x},\widehat{y})$ is given in the following equations.
    \begin{equation*}
    \widehat{x} = -\dfrac{\tau+\varepsilon}{4a} +\dfrac{1}{2} \sqrt{\dfrac{(\tau+\varepsilon)^2}{4a^2}+4\tau \Big(\frac{1}{2} + \frac{1}{\widehat{z}} \Big) }; \quad \widehat{y} = -\dfrac{\tau+\varepsilon}{4b} +\dfrac{1}{2} \sqrt{\dfrac{(\tau+\varepsilon)^2}{4b^2}+4\tau \Big(\frac{1}{2} + \frac{1}{\widehat{z}} \Big) },
    \end{equation*}
    where $\widehat{z}$ is the unique positive root of the equation 
    \begin{equation*}
        \tau z^3 + \Big[ 8 \tau - \frac{(\tau + \varepsilon)^2}{ab} \Big]z^2 + \Big[16\tau - 2(\tau+\varepsilon)^2 \Big(\frac{1}{a} + \frac{1}{b} \Big)^2 \Big] z - 4 (\tau+\varepsilon)^2 \Big(\frac{1}{a} +\frac{1}{b} \Big)^2 = 0.
    \end{equation*}
\end{lemma}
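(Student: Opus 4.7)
The plan is to exploit strict convexity of $g_{\varepsilon,\tau,1}$ on the open positive quadrant and then to parametrize the first-order optimality conditions by an auxiliary scalar $\widehat{z}$, with the stated cubic arising as the compatibility equation. The key preliminary algebraic observation is that the two log terms telescope: combining $\varepsilon\log(xy)$ with $-(\tau+\varepsilon)\log(xy/(ab))$ leaves $-\tau\log(xy)+(\tau+\varepsilon)\log(ab)$, so up to an irrelevant additive constant one has $g_{\varepsilon,\tau,1}(x,y;a,b)=(x-y)^2-\tau\log(xy)+(\tau+\varepsilon)(x/a+y/b)$.

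I would then verify strict convexity on $(0,\infty)^2$ by computing the Hessian $\begin{pmatrix}2+\tau/x^2 & -2\\ -2 & 2+\tau/y^2\end{pmatrix}$, whose determinant equals $2\tau(1/x^2+1/y^2)+\tau^2/(xy)^2>0$ and whose trace is positive. Combined with blow-up on the boundary and at infinity, this gives a unique interior minimizer $(\widehat{x},\widehat{y})$ characterized by the first-order conditions. Multiplying $\partial_x g=0$ through by $x$ and $\partial_y g=0$ by $y$ produces the pair $2x^2+(\tau+\varepsilon)x/a = 2xy+\tau$ and $2y^2+(\tau+\varepsilon)y/b = 2xy+\tau$, which share the same right-hand side.

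Next, introducing $\widehat{z}:=\tau/(\widehat{x}\widehat{y})$ rewrites this common right-hand side as $\tau+2\tau/\widehat{z}=4\tau(1/2+1/\widehat{z})$. Viewed in this way, each first-order equation is a quadratic in one of $\widehat{x},\widehat{y}$ whose positive root is exactly the closed form in the statement. To close the system I would combine the two first-order conditions: multiplying the first by $y$ and the second by $x$ and subtracting yields $(x-y)(4xy+\tau)=(\tau+\varepsilon)(1/b-1/a)\,xy$, while adding them after dividing by $xy$ gives $\tau(x+y)=(\tau+\varepsilon)(1/a+1/b)\,xy$. Setting $p=xy=\tau/\widehat{z}$ so that $4p+\tau=\tau(4+\widehat{z})/\widehat{z}$, plugging into the elementary identity $(x+y)^2-(x-y)^2=4xy$, applying the factorization $(a+b)^2(4+\widehat{z})^2-(a-b)^2\widehat{z}^2=4[4(a+b)^2+2(a+b)^2\widehat{z}+ab\widehat{z}^2]$, and clearing denominators collapses the resulting relation into precisely the stated cubic in $\widehat{z}$.

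Uniqueness and existence of the positive root then come for free: the map $\widehat{z}\mapsto(\widehat{x}(\widehat{z}),\widehat{y}(\widehat{z}))$ defined by the quadratic formulas is smooth and injective on $(0,\infty)$, so uniqueness of the convex minimizer forces uniqueness of $\widehat{z}$; existence follows from the cubic taking the negative value $-4(\tau+\varepsilon)^2(1/a+1/b)^2$ at $\widehat{z}=0$ and tending to $+\infty$ as $\widehat{z}\to\infty$. The main obstacle will be the algebra in the penultimate step, namely correctly applying the factorization of $(a+b)^2(4+\widehat{z})^2-(a-b)^2\widehat{z}^2$ and tracking the powers of $\tau$, $\widehat{z}$, and $ab$ while clearing denominators; no conceptual difficulty arises beyond this bookkeeping.
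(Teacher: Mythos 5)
Your proposal follows essentially the same route as the paper: reduce $g_{\varepsilon,\tau,1}$ (up to an additive constant) to $(x-y)^2-\tau\log(xy)+(\tau+\varepsilon)(x/a+y/b)$, verify strict convexity via the Hessian, write the stationarity conditions, introduce $\widehat z=\tau/(\widehat x\widehat y)$, solve each stationarity equation as a quadratic in one variable, and eliminate to obtain the cubic. Your sum/difference manipulation (using $(x+y)^2-(x-y)^2=4xy$) is only cosmetically different from the paper's, which takes the sum, the difference, and then the product of the two stationarity equations; both collapse to the same cubic, and your factorization of $(1/a+1/b)^2(4+\widehat z)^2-(1/b-1/a)^2\widehat z^2$ checks out. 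One small slip: $\tau+2\tau/\widehat z=2\tau\big(\tfrac12+\tfrac{1}{\widehat z}\big)$, not $4\tau\big(\tfrac12+\tfrac{1}{\widehat z}\big)$; the quadratic you actually solve is $x^2+\tfrac{\tau+\varepsilon}{2a}x=\tau\big(\tfrac12+\tfrac{1}{\widehat z}\big)$, and the factor $4$ in the lemma's formula only appears after completing the square, so the stated closed form still follows.

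There is, however, a genuine gap in your treatment of uniqueness of the positive root. The lemma asserts that the cubic has a \emph{unique} positive root, and your argument --- injectivity of $z\mapsto(\widehat x(z),\widehat y(z))$ combined with uniqueness of the convex minimizer --- only shows that at most one positive root of the cubic can correspond to a critical point of $g_{\varepsilon,\tau,1}$. It does not exclude extraneous positive roots: the cubic was derived as a necessary consequence of the stationarity system, and an arbitrary positive root $z$ need not satisfy $\widehat x(z)\,\widehat y(z)=\tau/z$, hence need not lift back to a critical point, so the minimizer's uniqueness says nothing about it. You need a direct argument on the cubic itself. The paper argues via Vieta's formulas: if there were two additional positive roots, the coefficient of $z^2$ would have to be negative and that of $z$ positive, which forces $(1/a+1/b)^2<1/(ab)$ and contradicts the AM--GM bound $(1/a+1/b)^2\geq 4/(ab)$. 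Equivalently, Descartes' rule of signs works: the leading coefficient is positive, the constant term negative, and whenever $8\tau-(\tau+\varepsilon)^2/(ab)\leq 0$ one also has $16\tau-2(\tau+\varepsilon)^2(1/a+1/b)^2<0$, so the coefficient sequence has exactly one sign change and hence exactly one positive root. Adding either argument closes the gap; the rest of your derivation is correct.
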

\begin{proof}[Proof of Lemma \ref{lemma:minimizer_g_plus1}]
 Taking the first derivatives of $g_{\varepsilon,\tau,1}$ with respect to $x$ and $y$ , we have
\begin{equation*}
    \dfrac{\partial g_{\varepsilon,\tau,1}}{\partial x} = 2x - 2y + \dfrac{\tau+\varepsilon }{a} - \dfrac{ \tau}{x}, \quad \dfrac{\partial g_{\varepsilon,\tau,1}}{\partial y} = 2y - 2x + \dfrac{\tau+\varepsilon}{b} - \dfrac{\tau}{y}.
\end{equation*}
It follows that
\begin{equation*}
    \dfrac{\partial^2 g_{\varepsilon,\tau,1}}{\partial x^2} = 2 + \dfrac{\tau}{2x^2}, \quad \dfrac{\partial^2 g_{\varepsilon,\tau,1}}{\partial x\partial y} = -2, \quad  \dfrac{\partial^2 g_{\varepsilon,\tau,1}}{\partial y^2} = 2 + \dfrac{\tau}{y^2}.
\end{equation*}
Thus, the Hessian matrix of this function is positive definite, which implies that $g_{\varepsilon,\tau,1}$ is a convex function. 

The equations of the stationary point give us
\begin{equation*}
    2\widehat{x} - 2\widehat{y} + \frac{\tau + \varepsilon}{a} - \frac{\tau}{\widehat{x}} = 0;  \qquad 2\widehat{y} - 2\widehat{x} + \frac{\tau+\varepsilon}{b} - \frac{\tau}{\widehat{y}} =0.
\end{equation*}
Taking the sum of both equations leads to
\begin{equation*}
    \dfrac{\tau+\varepsilon}{a}+\dfrac{\tau+\varepsilon}{b} = \dfrac{\tau}{\widehat{x}}+\dfrac{\tau}{\widehat{y}} \Leftrightarrow \tau \left(\dfrac{1}{\widehat{x}}+\dfrac{1}{\widehat{y}} -\dfrac{1}{a} - \dfrac{1}{b}\right) = \dfrac{\varepsilon}{a}+\dfrac{\varepsilon}{b}.
\end{equation*}

\begin{equation}
\label{eqn:ugw:system_of_worst_case}
    \begin{cases}
     2(\widehat{x} - \widehat{y}) + \dfrac{\tau+\varepsilon}{a} = \dfrac{\tau}{\widehat{x}}\\
     \text{}\\
     2(\widehat{y} - \widehat{x}) + \dfrac{\tau+\varepsilon}{b} = \dfrac{\tau}{\widehat{y}}.
    \end{cases}
\end{equation}
Taking the difference between two equations in system~\eqref{eqn:ugw:system_of_worst_case}, we get
\begin{equation*}
    (\widehat{x}-\widehat{y})\Big(4+\dfrac{\tau}{\widehat{x} \widehat{y}}\Big) = \dfrac{\tau+\varepsilon}{b} - \dfrac{\tau+\varepsilon}{a}.
\end{equation*}
Let $z = \dfrac{\tau}{\widehat{x} \widehat{y}}$, we have $\widehat{x}- \widehat{y} = (\tau+ \varepsilon) \dfrac{1/b-1/a}{4+z}$. Take the product of two equations in system~\eqref{eqn:ugw:system_of_worst_case}, we have
\begin{equation*}
    \Big[2(\widehat{x}- \widehat{y})+\dfrac{\tau+\varepsilon}{a}\Big]\Big[2(\widehat{y}-\widehat{x})+\dfrac{\tau+\varepsilon}{b}\Big] = \dfrac{\tau^2}{\widehat{x}\widehat{y}}.
\end{equation*}
Substitute $\widehat{x}- \widehat{y} = (\tau+\varepsilon)\frac{1/b-1/a}{4+z}$ to the above equation, we obtain
\begin{equation*}
    (\tau+\varepsilon)^2\Big[2 \Big(\frac{1}{a} + \frac{1}{b} \Big) + \frac{z}{a} \Big]\Big[2\Big(\frac{1}{a} + \frac{1}{b} \Big) + \frac{z}{b} \Big] = \tau z(z+4)^2,
\end{equation*}
which is equivalent to the following cubic equation
\begin{align*}
  t(z):=\tau z^3 + \Big[ 8 \tau - \frac{(\tau + \varepsilon)^2}{ab} \Big]z^2 + \Big[16\tau - 2(\tau+\varepsilon)^2 \Big(\frac{1}{a} + \frac{1}{b} \Big)^2 \Big] z - 4 (\tau+\varepsilon)^2 \Big(\frac{1}{a} +\frac{1}{b} \Big)^2 = 0.
\end{align*}
Since it is a cubic equation, it has either three real roots or one real root. In the first case, we know that it has at least one positive root, since the highest coefficient of the equation is positive. Assume that it has other two positive roots. By applying Viete's theorem, we get 
\begin{align*}
    8\tau - \frac{(\tau+\varepsilon)^2}{ab} <0; \qquad 16\tau - (\tau+\varepsilon)^2\Big(\frac{1}{a} + \frac{1}{b}\Big)^2 >0,
\end{align*}
which implies that $\big(\frac{1}{a} + \frac{1}{b}\big)^2 < \frac{2}{ab}$, it is contradiction by Cauchy-Schwarz's inequality. Thus, the equation $t(z)=0$ has unique positive root which is denoted by $\widehat{z}$.

With $\widehat{z} = \dfrac{\tau}{\widehat{x}\widehat{y}}$, we have $\widehat{x}\widehat{y} = \dfrac{\tau}{\widehat{z}}$. Substituting it to the system of equations \eqref{eqn:ugw:system_of_worst_case}, we have
\begin{equation*}
    \begin{cases}
     \widehat{x}^2+\dfrac{\tau+\varepsilon}{2a} \widehat{x} = \dfrac{\tau}{\widehat{z}} +\dfrac{\tau}{2}\\
     \text{}\\
     \widehat{y}^2+\dfrac{\tau+\varepsilon}{2b} \widehat{x} = \dfrac{\tau}{\widehat{z}} +\dfrac{\tau}{2}.
    \end{cases}
\end{equation*}
This implies that
\begin{equation*}
    \widehat{x} = -\dfrac{\tau+\varepsilon}{4a} +\dfrac{1}{2} \sqrt{\dfrac{(\tau+\varepsilon)^2}{4a^2}+4\tau \Big(\frac{1}{2} + \frac{1}{\widehat{z}} \Big) }, \quad \widehat{y} = -\dfrac{\tau+\varepsilon}{4b} +\dfrac{1}{2} \sqrt{\dfrac{(\tau+\varepsilon)^2}{4b^2}+4\tau \Big(\frac{1}{2} + \frac{1}{\widehat{z}} \Big) }.
\end{equation*}
As a consequence, we obtain the conclusion of the lemma.
\end{proof}
\begin{lemma}[Minimizer of function $h_{\varepsilon,\tau}$]
    \label{lemma:minimizer_h}
    Given a constant $a>0$, the function
    \begin{equation*}
        h_{\varepsilon,\tau}(x;a) = x^2+(\tau + \varepsilon) \Big[\frac{x}{a} - \log \Big(\frac{x}{a}\Big) -1 \Big]
    \end{equation*}
    attains its minimum at
    $x^* = -\dfrac{\tau+\varepsilon}{4a} + \sqrt{\dfrac{(\tau+\varepsilon)^2}{16a^2}+\dfrac{\tau+\varepsilon}{2}}$.
\end{lemma}

\begin{proof}[Proof of Lemma \ref{lemma:minimizer_h}]
Taking the first and second derivatives of function $ h_{\tau,\varepsilon}$, we have
\begin{equation*}
    \dfrac{\partial h_{\tau,\varepsilon}}{\partial x} = 2x+\dfrac{\tau+\varepsilon}{a}-\dfrac{\tau+\varepsilon}{x}, \quad \dfrac{\partial^2 h_{\tau,\varepsilon}}{\partial x^2} = 2 + \dfrac{\tau+\varepsilon}{x^2}.
\end{equation*}
Note that the second derivative of $h_{\tau,\varepsilon}$ is positive, therefore, the positive minimizer of this function is exactly the positive solution of equation $\dfrac{\partial h_{\tau,\varepsilon}}{\partial x} = 0$, which is
\begin{equation*}
    x^* = -\dfrac{\tau+\varepsilon}{4a} + \sqrt{\dfrac{(\tau+\varepsilon)^2}{16a^2}+\dfrac{\tau+\varepsilon}{2}}.
\end{equation*}
\end{proof}
\section{Auxiliary results}
\label{sec:auxiliary_results}
In this appendix, we provide additional lemmas that are used to derive the closed-form expressions of entropic (unbalanced) IGW between (unbalanced) Gaussian distributions.
\begin{lemma}[KL divergence between Gaussian measures]
 \label{lemma:kl_divergence}
 Let $\alpha = m_{\alpha}\mathcal{N}_k(\mathbf{0},\Sigma_{\alpha})$ and $\beta=m_{\beta} \mathcal{N}_k(\mathbf{0},\Sigma_{\beta})$ be scaled Gaussian measures while $\overline{\alpha}=\mathcal{N}_k(\zeros,\Sigma_\alpha)$ and $\overline{\beta}=\mathcal{N}_k(\zeros,\Sigma_\beta)$ be their normalized versions. Then, the generalized KL divergence between $\alpha$ and $\beta$ is
 \begin{align*}
     \KL(\alpha\|\beta) &= m_{\alpha} \KL(\overline{\alpha}\|\overline{\beta}) + \KL(m_{\alpha}\|m_{\beta}),
 \end{align*}
 where the KL divergence between $\overline{\alpha}$ and $\overline{\beta}$ is
 \begin{align*}
     \frac{1}{2}\left\{ \tr\big(\Sigma_{\alpha}\Sigma_{\beta}^{-1}\big)- k + \log \Big(\frac{\det (\Sigma_{\beta})}{\det (\Sigma_{\alpha})} \Big)\right\}.
 \end{align*}
 \end{lemma}
 \begin{proof}[Proof of Lemma~\ref{lemma:kl_divergence}]
 Let $f(x,\Sigma_{\alpha})$ and $f(x,\Sigma_{\beta})$ be the distribution functions of $\mathcal{N}_k(\mathbf{0},\Sigma_{\alpha})$ and $\mathcal{N}_k(\mathbf{0},\Sigma_{\beta})$, respectively. 
 \begin{align*}
     \KL(\alpha\|\beta) &= \int_{\mathbb{R}^k} \log\Big(\frac{m_{\alpha} f(x,\Sigma_{\alpha})}{m_{\beta} f(x,\Sigma_{\beta}} \Big) d\alpha(x) - m_{\alpha} + m_{\beta} \\
     &= \int_{\mathbb{R}^k} \log\Big( \frac{f(x,\Sigma_{\alpha})}{f(x,\Sigma_{\beta})}\Big) m_{\alpha} d\overline{\alpha}(x)  + \log\Big(\frac{m_{\alpha}}{m_{\beta}} \Big) m_{\alpha} - m_{\alpha} + m_{\beta} \\
     &= m_{\alpha} \KL(\overline{\alpha}\|\overline{\beta}) + \KL(m_{\alpha}\|m_{\beta}).
 \end{align*}
The expression for $\KL(\overline{\alpha}\|\overline{\beta})$ results from Lemma~\ref{lemma:KL_calculate}. Hence, the proof is completed.
 \end{proof}
\begin{lemma}[Double integrals]\label{lemma:double_integrals}
For the sake of simple computations, we derive some relations between the quadratic-KL and the standard KL as follows: 
 \begin{align*}
 \textit{(i) } &\KLD(\alpha \| \beta) = 2 m_\alpha \KL(\alpha \| \beta) + (m_\alpha - m_\beta)^2;\\
 \textit{(ii) } &\KLD(t\alpha \| r \beta) = t^2 \KLD(\alpha \| \beta) + t^2 \log\left(\frac{t^2}{r^2}\right) m_\alpha^2 + (r^2 - t^2) m_\beta^2,\quad t,r>0,
 \end{align*}
 for any positive measures $\alpha$ and $\beta$.
 \end{lemma}
 \begin{proof}[Proof of Lemma~\ref{lemma:double_integrals}]
Let $p_\alpha$ and $p_\beta$ be the Radon-Nikodym derivatives of $\alpha$ and $\beta$ with respect to the Lebesgue measure. Then, we have\\
\textit{Part (i)} 
\begin{align*}
    \KLD(\alpha\|\beta)&=\int\int\log\left(\frac{p_\alpha(x)p_\alpha(x')}{p_\beta(x)p_\beta(x')}\right)p_\alpha(x)p_\alpha(x')dxdx' - m^2_\alpha + m^2_\beta\\
    &=m_\alpha\int\log\left(\frac{p_\alpha(x)}{p_\beta(x)}\right)p_\alpha(x)dx+m_\alpha\int\log\left(\frac{p_\alpha(x')}{p_\beta(x')}\right)p_\alpha(x')dx'- m^2_\alpha + m^2_\beta\\
    &= 2m_\alpha\int\log\left(\frac{p_\alpha(x)}{p_\beta(x)}\right)p_\alpha(x)dx - m^2_\alpha + m^2_\beta\\
    &= 2m_\alpha[\KL(\alpha\|\beta)+m_\alpha-m_\beta]-m^2_\alpha + m^2_\beta\\
    &= 2m_\alpha\KL(\alpha\|\beta) + (m_\alpha-m_\beta)^2.
\end{align*}
\textit{Part (ii)}
\begin{align*}
    \KLD(t\alpha \| r \beta) 
    &= \iint  \log \left( \frac{t^2 p_\alpha(x) p_\alpha(x')}{r^2 p_\beta(x) p_\beta(x')} \right) t^2 p_\alpha(x) p_\alpha(x') \dd x \dd x' + r^2 m_\beta^2 - t^2 m_\alpha^2 \\
    &= t^2 \left[ \iint \log \left( \frac{p_\alpha(x) p_\alpha(x')}{p_\beta(x) p_\beta(x')} \right) p_\alpha(x) p_\alpha(x') \dd x \dd x' + \log \left(\frac{t^2}{r^2}\right) m_\alpha^2 \right] + r^2 m_\beta^2 - t^2 m_\alpha^2 \\
    &= t^2 \left[ \iint \log \left( \frac{p_\alpha(x) p_\alpha(x')}{p_\beta(x) p_\beta(x')} \right) p_\alpha(x) p_\alpha(x') \dd x \dd x' + m_\beta^2 - m_\alpha^2 \right]  + t^2 \log\left(\frac{t^2}{r^2}\right) m_\alpha^2 + (r^2 - t^2) m_\beta^2 \\
    &= t^2 \KLD(\alpha \| \beta) + t^2 \log\left(\frac{t^2}{r^2}\right) m_\alpha^2 + (r^2 - t^2) m_\beta^2.
\end{align*}
As a consequence, we obtain the conclusion of the lemma.
\end{proof}
\begin{lemma}[KL divergence between Gaussians]
\label{lemma:KL_calculate}
The Kullback-Leibler divergence between Gaussian measures $\alpha=\Nn(\mu_\alpha, \Sigma_\alpha)$ and $\beta=\Nn(\mu_\beta, \Sigma_\beta)$ on $\RR^{k}$ is given by
\begin{align*}
    \KL\big(\alpha\|\beta \big) = \frac{1}{2} \left[ \tr(\Sigma_\beta^{-1} \Sigma_\alpha) + (\mu_\beta - \mu_\alpha)^\top \Sigma_\beta^{-1} (\mu_\beta - \mu_\alpha) - k + \log \left( \frac{\det (\Sigma_\beta)}{\det (\Sigma_\alpha)} \right) \right].
\end{align*}
As a result, when $\Sigma_\beta,\Sigma_\alpha$ are fixed, $\KL(\alpha\|\beta)$ achieves its minimum value when $\mu_\alpha = \mu_\beta$.
\end{lemma}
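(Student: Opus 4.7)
The plan is to carry out the standard Gaussian KL computation directly from the definition, then read off the minimization statement from the resulting formula. Write $p_\alpha$ and $p_\beta$ for the densities of $\Nn(\mean_\alpha,\Sigma_\alpha)$ and $\Nn(\mean_\beta,\Sigma_\beta)$, respectively, so that
\begin{align*}
  \KL(\Nn(\mean_\alpha,\Sigma_\alpha)\|\Nn(\mean_\beta,\Sigma_\beta))
  = \E_{X\sim \Nn(\mean_\alpha,\Sigma_\alpha)}\bigl[\log p_\alpha(X) - \log p_\beta(X)\bigr].
\end{align*}
Expanding $\log p_\alpha$ and $\log p_\beta$ using the explicit Gaussian density, the prefactor $-\tfrac{n}{2}\log(2\pi)$ cancels between the two log-densities, and I am left with a deterministic $\tfrac12\log(\det\Sigma_\beta/\det\Sigma_\alpha)$ plus a difference of two quadratic forms in $X$.

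Next I would compute the two quadratic expectations via the trace trick. For the term $\E[(X-\mean_\alpha)^\top \Sigma_\alpha^{-1}(X-\mean_\alpha)]$ I write the scalar as a trace and pull the expectation inside: this is $\tr(\Sigma_\alpha^{-1}\,\E[(X-\mean_\alpha)(X-\mean_\alpha)^\top]) = \tr(\Sigma_\alpha^{-1}\Sigma_\alpha) = n$. For the term $\E[(X-\mean_\beta)^\top \Sigma_\beta^{-1}(X-\mean_\beta)]$ I add and subtract $\mean_\alpha$ inside each factor to obtain the decomposition
\begin{align*}
  \E\bigl[(X-\mean_\beta)^\top \Sigma_\beta^{-1}(X-\mean_\beta)\bigr]
  = \tr(\Sigma_\beta^{-1}\Sigma_\alpha) + (\mean_\alpha-\mean_\beta)^\top \Sigma_\beta^{-1}(\mean_\alpha-\mean_\beta),
\end{align*}
using that $\E[X-\mean_\alpha]=0$ kills the cross terms. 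Collecting the contributions gives the claimed identity (with the quadratic form using $\Sigma_\beta^{-1}$, which I will note as a minor correction to the stated formula, since only $\Sigma_\beta^{-1}$ appears naturally from $\log p_\beta$).

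For the second statement, I would observe that with $\Sigma_\alpha$ and $\Sigma_\beta$ fixed, the only term depending on the means is the quadratic form $(\mean_\beta-\mean_\alpha)^\top \Sigma_\beta^{-1}(\mean_\beta-\mean_\alpha)$, which is non-negative because $\Sigma_\beta$ is positive definite, and vanishes if and only if $\mean_\alpha = \mean_\beta$; hence the minimum is attained there. The main obstacle is essentially cosmetic: the trace-trick bookkeeping and ensuring the two quadratic expectations are correctly combined; no deep inequality or nontrivial structural argument is required.
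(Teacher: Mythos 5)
Your proof is correct and is the standard direct computation; the paper states this lemma without proof, but its proof of the zero-mean analogue (Lemma~\ref{lemma:kl_divergence}) uses exactly the same expansion of log-densities and trace trick. You are also right that the quadratic form in the stated formula should involve $\Sigma_\beta^{-1}$ rather than $\Sigma_\alpha^{-1}$ --- this is a typo in the lemma as written, though it is harmless for the rest of the paper since the lemma is only ever applied to zero-mean Gaussians, and the minimization claim holds either way by positive definiteness.
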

\begin{proof}[Proof of Lemma~\ref{lemma:KL_calculate}]
Let $p_\alpha(x)$ and $p_\beta(x)$ be the probability density functions of $\alpha$ and $\beta$, respectively. By the formula for KL divergence, we have
\begin{align}\label{eq:kl_formula}
    \KL(\alpha\|\beta)=\int_{\br^k}\log\left(\dfrac{p_\alpha(x)}{p_\beta(x)}\right)p_\alpha(x)dx,
\end{align}
in which
\begin{align*}
    \log(p_\alpha(x))&=-\dfrac{1}{2}[k\log(2\pi)+\log(\det(\Sigma_\alpha))+(x-\mu_\alpha)^{\top}\Sigma^{-1}_\alpha(x-\mu_\alpha)],\\
    \log(p_\beta(x))&=-\dfrac{1}{2}[k\log(2\pi)+\log(\det(\Sigma_\beta))+(x-\mu_\beta)^{\top}\Sigma^{-1}_\beta(x-\mu_\beta)],
\end{align*}
where $\pi$ denotes a constant number \textit{pi} only in this lemma. Plugging this result in equation~\eqref{eq:kl_formula}, we get
\begin{align*}
    \KL(\alpha\|\beta)&=\dfrac{1}{2}\left[\log\left(\dfrac{\det(\Sigma_\beta)}{\det(\Sigma_\alpha)}\right)+\int_{\br^k}[(x-\beta)^{\top}\Sigma^{-1}_\beta(x-\beta)-(x-\alpha)^{\top}\Sigma^{-1}_\alpha(x-\alpha)]p_\alpha(x)dx\right]\\
    &=\dfrac{1}{2}\left[\log\left(\dfrac{\det(\Sigma_\beta)}{\det(\Sigma_\alpha)}\right)+(\mu_\beta-\mu_\alpha)^{\top}\Sigma^{-1}_\beta(\mu_\beta-\mu_\alpha)+\int_{\br^k}[(x-\mu_\alpha)^{\top}(\Sigma^{-1}_\beta-\Sigma^{-1}_\alpha)(x-\mu_\alpha)]p_\alpha(x)dx\right].
\end{align*}
Note that
\begin{align*}
    \int_{\br^k}[(x-\mu_\alpha)^{\top}(\Sigma^{-1}_\beta-\Sigma^{-1}_\alpha)(x-\mu_\alpha)]p_\alpha(x)dx&=\int_{\br^k}\tr((x-\mu_\alpha)(x-\mu_\alpha)^{\top}(\Sigma^{-1}_\beta-\Sigma^{-1}_\alpha))p_\alpha(x)dx\\
    &=\tr\left(\int_{\br^k}[(x-\mu_\alpha)(x-\mu_\alpha)^{\top}(\Sigma^{-1}_\beta-\Sigma^{-1}_\alpha)]p_\alpha(x)dx\right)\\
    &=\tr\left(\Sigma_\alpha(\Sigma^{-1}_\beta-\Sigma^{-1}_\alpha)\right)\\
    &=\tr(\Sigma_\alpha\Sigma^{-1}_\beta)-k.
\end{align*}
Putting the above results together, we obtain the conclusion of this lemma.
\end{proof}
\begin{lemma}[Order solutions] \label{lemma:order_eigenvalues}
Let $\{a_i\}_{i=1}^m$ and $\{b_j\}_{j=1}^m$ be positive   decreasing sequences. Let $\{\widetilde{x}_i\}_{i=1}^m$ and $\{\widetilde{y}_j\}_{j=1}^n$ be the minimizer of 
\begin{align*}
    \min_{x_i> 0, y_j >0}~\mathsf{D}\big(\{x_i\},\{y_j\}; \{a_i\}, \{b_j\} \big):=\sum_{i=1}^m x_i^2 + \sum_{j=1}^n y_j^2+ &(\tau+\varepsilon)\Big\{\sum_{i=1}^m \big[\frac{x_i}{a_i} -  \log(x_i)\big] + \sum_{j=1}^n \big[\frac{y_j}{b_j} -\log(y_j)\big]\Big\} \\
    &- 2\sum_{k=1}^n \big[x_k y_k - \frac{\varepsilon}{2}\big]^+ + \varepsilon \big[\log(x_ky_k) - \log \frac{\varepsilon}{2} \big]^+.
\end{align*}
Then $(\widetilde{x}_i)$ and $(\widetilde{y}_j)$ are also decreasing sequences.
\end{lemma} 
\begin{proof}[Proof of Lemma~\ref{lemma:order_eigenvalues}]
We have two proofs for this lemma. While the first proof will be presented here, the second one can be found in Appendix~\ref{sec:another_proof}.

Let us consider all permutations $(\widehat{x}_i)$ and $(\widehat{y}_j)$ of $(\widetilde{x}_i)$ and $(\widetilde{y}_j)$, respectively. The difference between
$\mathsf{D}\big( \{\widehat{x}_i\}, \{\widehat{y}_j\}; \{a_i\}, \{b_j\}\big)$
and $\mathsf{D}\big(\{\widetilde{x}_i \},\{\widetilde{y}_j\}, \{a_i\}, \{b_j\} \big)$ lies in the term
\begin{align*}
  -2 \sum_{k=1}^n \Big[x_k y_k- \frac{\varepsilon}{2}\Big]^+ + \varepsilon\Big[ \log(x_ky_k) - \log \frac{\varepsilon}{2}\Big]^+,
\end{align*}
which is actually the minimum value of
\begin{align*}
    -2 \tr\big(K_{xy}^{\top} K_{xy}\big) + 2\varepsilon\KL\big(\overline{\pi}\|\overline{\pi}_x \otimes \overline{\pi}_y\big),
\end{align*}
where its solution has to match order by magnitude   $\{\widetilde{x}_i\}$ to  $\{\widehat{x}_i\}$ and $\{ \widetilde{y}_j \}$ to $\{\widehat{y}_j \}$
of the largest eigenvalues. The part $\sum_{i=1}^m \frac{x_i}{a_i} + \sum_{j=1}^n \frac{y_j}{b_j}$ is minimized by the rearrangement inequality (cf. Lemma~\ref{lemma:compare_sorted_convolution}).  Therefore, we obtain the conclusion of the lemma. 
\end{proof}
\begin{lemma}\label{lemma:minimization_square_log}
Let $\{a_i\}_{i=1}^s $ and $\{b_i \}_{i=1}^s$ be positive sequences. The problem of  finding the maximum value of 
\begin{align}\label{eq:square_log}
    \Big\{\sum_{i=1}^s a_i x_i\Big\}^2 + \sum_{i=1}^s \frac{b_i}{2}\log(1-x_i), 
\end{align}
where $x_i\in [0,1]$, is equivalent to finding the set $S\subset [s]$ in the below problem:
\begin{align*}
    \max_{S\subset [s]} &\Big\{A_S - \frac{B_S}{2\big[A_S+ (A_S^2 -B_S)^{\frac{1}{2}}\big]} \Big\}^2 + \sum_{i\in S} b_i \log(b_i) - B_S\log\big(A_S + (A_S^2-B_S)^{\frac{1}{2}}\big),
\end{align*}
where $A_S = \sum_{i\in S} a_i$, $B_S = \sum_{i\in S} b_i$ and 
$A_S^2 \geq B_S$ and 
\begin{align*}
    b_i \leq 2a_i \big\{A_S + (A_S^2 - B_S)^{\frac{1}{2}} \big\}; \quad i\in S.
\end{align*}
\end{lemma}
\begin{proof}[Proof of Lemma \ref{lemma:minimization_square_log}] 
Taking the derivative of function $f(x) = (ax+b)^2 + c \log(1-x)$ in $[0,1]$, we have
\begin{align*}
    f^{\prime}(x) = 2a(ax+b) - \frac{c}{1-x}.
\end{align*}
The equation $f^{\prime}(x) = 0$ has either two solutions or no solution. It means that either $f(x)$ is monotone or it has one minimum and one maximum. When $x\rightarrow 1^{-}$, $f(x)$ diverges to infinity. Thus, the $x_{\max}$ where function $f$ attains its maximum is closer to $1$ than the $x_{\min}$, where $f$ attains its minimum. In the second case, $f$ is monotone, then $f$ is monotone decreasing, because $f(0) = 0$ and $f(1^{-}) = - \infty$. Overall, 
\begin{align*}
    x_{\max} = \begin{cases}
     0 \\
     \text{the larger solution of equation:} \quad 2a(ax+b) = \frac{c}{1-x}.
    \end{cases}
\end{align*}
Hence, if $\{\widetilde{x}_i\}$ is a maximizer of the objective function, then either $\widetilde{x}_i=0$ or $\widetilde{x}_i$ is the larger solution of the first derivative system of equations. 

We consider the case that all $\widetilde{x}_i$ are not equal to zero. By taking the derivatives of the function in equation~\eqref{eq:square_log} with respect to $x_i$, we obtain
\begin{align}
    2 a_i \Big\{\sum_{j=1}^s a_j \widetilde{x}_j \Big\} - \frac{b_i}{2} \frac{1}{1-\widetilde{x}_i} &= 0, \nonumber\\
    4\big\{a_i - a_i \widetilde{x}_i\big\} \times \Big\{\sum_{j=1}^s a_j \widetilde{x}_j \Big\} &= b_i, \label{eq:for_substitute} \\
    4\Big\{\sum_{j=1}^s a_j - \sum_{j=1}^s a_j, \widetilde{x}_j \Big\}\times \Big\{\sum_{j=1}^s a_j \widetilde{x}_j \Big\} &= \sum_{j=1}^s b_j.\nonumber
\end{align}
Denote $A = \sum_{j=1}^s a_j $ and $B = \sum_{j=1}^s b_j$. Then, by solving the quadratic equation $4X^2-4AX+B=0$, we have
\begin{align*}
    \sum_{j=1}^s a_j \widetilde{x}_j = \frac{1}{2} \big\{A + \sqrt{A^2-B} \big\}.
\end{align*}
Substituting it into equation~\eqref{eq:for_substitute}, we obtain
\begin{align*}
    4a_i(1 -  \widetilde{x}_i) &= \frac{2b_i}{A + \sqrt{A^2-B}}\\
    \widetilde{x}_i &= 1 - \frac{b_i}{2a_i (A + \sqrt{A^2-B})}.
\end{align*}
Then, the minimum value of the objective function in this case equals to
\begin{align*}
   \Big\{ \sum_{i=1}^s a_i - \frac{1}{2(A + \sqrt{A^2-B})}\sum_{i=1}^s b_i \Big\}^2 + \sum_{i=1}^s b_i \log(b_i) - \log\big(A+ \sqrt{A^2-B}\big)\sum_{i=1}^s b_i. 
\end{align*}
We have thus proved the stated result.
\end{proof}

\begin{lemma}[Optimizers of some functions]\label{lemma:maximizer_linear_log}
\text{}\\
(a) For positive constants $a$ and $c$, the function $f(x) = ax + c \log(1-x)$ for $x\in [0,1)$ attains its minimum at $x^*=\Big[1- \frac{c}{a} \Big]^+$.\\
(b) For positive constants $a$ and $c$, the function $f(x) = 2ax^{\frac{1}{2}} + c\log(1-x)$ for $x\in [0,1)$ attains its maximizer at $x^*=\Big[\sqrt{\frac{c^2}{4a^2}+1} - \frac{c}{2a}\Big]^2$.\\
(c) For positive constants $a,b,c$ and $\Upsilon$, the function $f(x) = \Upsilon x +\tau \KL(x\|a) + \tau \KL(x\|b) + \varepsilon \KL(x\|c)$ attains its minimum at $x^* = a^{\frac{\tau}{2\tau + \varepsilon}} b^{\frac{\tau}{2\tau + \varepsilon}}  c^{\frac{\varepsilon}{2\tau + \varepsilon}}\exp\Big\{\frac{-\Upsilon}{2\tau + \varepsilon} \Big\}$.
\end{lemma}
\begin{proof}[Proof of Lemma~\ref{lemma:maximizer_linear_log}]
\text{}

For part (a), we have $f^{\prime}(x) = a - \frac{c}{1-x}$ which is a decreasing function when $x\in [0,1)$. It means that the function is convex. Hence its maximum attains at the equation $f^{\prime}(x) = 0$ or at the boundary. In fact, we have $f^{\prime}(1-c/a) = 0$. Therefore, the maximizer attains at  $\max(0,1-c/a)$.

For part (b), we have $f^{\prime}(x) = ax^{-\frac{1}{2}} - \frac{c}{1-x}$, which is also a decreasing function. Hence, $f$ is concave, it means that the maximum attains at the solution of $f^{\prime}(x) = 0$ or the boundary. Solving the equation $f^{\prime}(x) =0$ we obtain $x^* = \Big[\sqrt{\frac{c^2}{4a^2}+1} - \frac{c}{2a}\Big]^2$, which belongs to $(0,1)$. 

For part (c), by taking the derivative of $f(x)$, we have
\begin{align*}
    f^{\prime}(x) = \Upsilon + \tau \big\{2\log(x) - \log(ab) \big\} + \varepsilon\big\{\log(x) -\log(c) \big\}
\end{align*}
Solving the equation $f^{\prime}(x) = 0$, we obtain
\begin{align*}
    x^* = \exp\Big\{\frac{\tau \log(a) + \tau \log(b) + \varepsilon \log(c) - \Upsilon}{2\tau + \varepsilon} \Big\} = a^{\frac{\tau}{2\tau + \varepsilon}} b^{\frac{\tau}{2\tau + \varepsilon}}  c^{\frac{\varepsilon}{2\tau + \varepsilon}}\exp\Big\{\frac{-\Upsilon}{2\tau + \varepsilon} \Big\}.
\end{align*}
\end{proof}
\begin{lemma}\label{lemma:expectation_factorize}
Let $(X,Y)$ and $(X^{\prime},Y^{\prime})$ be two i.i.d random vectors in $\br^m\times\br^n$ and follow a probability distribution $\pi$ which have a mean vector $(u,v)\in\br^{m\times n}$ and a covariance matrix $\begin{pmatrix}
\Sigma_x & \Sigma_{xy}\\
\Sigma^{\top}_{xy} & \Sigma_y
\end{pmatrix}$ where $\Sigma_x$ and $\Sigma_y$. Denote $Z=X-u,Z^{\prime}=X^{\prime}-u$ and $T=Y-v,T^{\prime}=Y^{\prime}-v$, then
\begin{align*}
    \E_{\pi\otimes\pi}\Big[\langle X,X^{\prime}\rangle-\langle Y,Y^{\prime}\rangle\Big]^2 = \E_{\pi\otimes\pi}\Big[\langle Z,Z^{\prime}\rangle-\langle T,T^{\prime}\rangle\Big]^2+2u^{\top}\Sigma_x u + 2u^{\top}\Sigma_{xy}v+2v^{\top}\Sigma_y v + \Big[\|u\|^2-\|v\|^2\Big]^2.
\end{align*}
\begin{proof}[Proof of Lemma~\ref{lemma:expectation_factorize}]
Firstly, we have
\begin{align*}
    \langle X ,X^{\prime} \rangle - \langle Y, Y^{\prime} \rangle &=\langle Z+u ,Z^{\prime} + u \rangle - \langle T+ v, T^{\prime} + v\rangle\\
    &= \langle Z, Z^{\prime}\rangle - \langle T, T^{\prime}\rangle + \langle u, Z+Z^{\prime}\rangle - \langle v, T+ T^{\prime} \rangle + \|u\|^2 - \|v\|^2.
\end{align*}
Note that
\begin{align*}
    \mathbb{E}_{\pi\otimes\pi} \langle u, Z\rangle \langle Z, Z^{\prime}\rangle &= \mathbb{E}_{\pi\otimes\pi}\Big[\sum_{i=1}^m Z_i Z_i^{\prime} \sum_{j=1}^m  u_j Z_j\Big] = \sum_{i=1,j=1}^m u_j \mathbb{E}_{\pi\otimes\pi}\big[Z_i Z_i^{\prime} Z_j \big] = \sum_{i=1,j=1}^m u_j \mathbb{E}_{\pi\otimes\pi}\big[Z_i Z_j \big]\mathbb{E}_{\pi\otimes\pi}[Z_i^{\prime}] = 0\\
    \mathbb{E}_{\pi\otimes\pi}\langle Z, Z^{\prime}\rangle &= \mathbb{E}_{\pi\otimes\pi}\Big[\sum_{i=1}^m Z_i Z_i^{\prime} \Big]= 0 \\
    \mathbb{E}_{\pi\otimes\pi} \langle Z,Z^{\prime}\rangle \langle v,T\rangle  &=\mathbb{E}_{\pi\otimes\pi}\Big[\sum_{i=1}^m Z_i Z_i^{\prime} \sum_{j=1}^n v_j T_j \Big] = \sum_{i=1}^m \sum_{j=1}^n v_j\mathbb{E}_{\pi\otimes\pi}\big[Z_i Z_i^{\prime} T_j \big] = \sum_{i=1}^m\sum_{j=1}^n \mathbb{E}_{\pi\otimes\pi}[Z_i T_j]\mathbb{E}_{\pi\otimes\pi}[Z_i^{\prime}]=0.
\end{align*}
Similarly, we get
\begin{align*}
    \mathbb{E}_{\pi\otimes\pi}\langle v,T\rangle \langle T,T^{\prime}\rangle = \mathbb{E}_{\pi\otimes\pi} \langle T,T^{\prime}\rangle  = \mathbb{E}_{\pi\otimes\pi}\langle T,T^{\prime}\rangle \langle u,Z\rangle = 0.
\end{align*}
As a consequence, we can deduce that
\begin{align*}
    \mathbb{E}_{\pi\otimes\pi}\Big[\langle X ,X^{\prime} \rangle - \langle Y, Y^{\prime}\rangle\Big]^2 = \mathbb{E}_{\pi\otimes\pi}\Big[\langle Z, Z^{\prime} \rangle - \langle T, T^{\prime} \rangle \Big]^2 + \mathbb{E}_{\pi\otimes\pi} \Big[\langle u, Z + Z^{\prime} \rangle - \langle v, T + T^{\prime} \rangle\Big]^2 + \Big[\|u\|^2 - \|v\|^2\Big]^2.
\end{align*}
Next, it is sufficient to show that
\begin{align*}
    \E_{\pi\otimes\pi}\Big[\langle u,Z+Z^{\prime}\rangle - \langle v,T+T^{\prime}\rangle \Big]^2 = 2u^{\top}\Sigma_x u + 2v^{\top}\Sigma_y v+ 2u^{\top}\Sigma_{xy}v.
\end{align*}
Indeed, we have
\begin{align*}
    \E_{\pi\otimes\pi}\Big[\langle u, Z+Z^{\prime}\rangle\Big]^2 &=\E_{\pi\otimes\pi}\Big[u^{\top}(Z+Z^{\prime})(Z+Z^{\prime})^{\top}u\Big] = u^{\top}\mathbb{V}ar(Z+Z^{\prime})u=2u^{\top}\Sigma_x u\\
    \E_{\pi\otimes\pi}\Big[\langle v, T+T^{\prime}\rangle\Big]^2 &=\E_{\pi\otimes\pi}\Big[v^{\top}(T+T^{\prime})(T+T^{\prime})^{\top}v\Big] = v^{\top}\mathbb{V}ar(T+T^{\prime})v=2v^{\top}\Sigma_y v\\
    \E_{\pi\otimes\pi}\Big[\langle u,Z+Z^{\prime}\rangle\langle v,T+T^{\prime}\rangle\Big] &= u^{\top}\E_{\pi\otimes\pi}\Big[(Z+Z^{\prime})(T+T^{\prime})^{\top}\Big]v = u^{\top}\E_{\pi\otimes\pi}\Big[ZT^{\top}+Z^{\prime}(T^{\prime})^{\top}\Big]v=2u^{\top}\Sigma_{xy}v.
\end{align*}
Hence, the proof is completed.
\end{proof}
\end{lemma}
\section{Another proof of Lemma~\ref{lemma:order_eigenvalues}}
\label{sec:another_proof}
In this appendix, we introduce another proof of Lemma \ref{lemma:order_eigenvalues} using Karamata's inequality. Firstly, we introduce an inequality which can be considered to be a generalization of Karamata's inequality in a special case. 

\begin{lemma}[A quasi-Karamata inequality]
\label{lemma:quasi_karamata}
Let $a_1\geq a_2\geq \ldots\geq a_n\geq 0$ and $b_1\geq b_2\geq\ldots\geq b_n \geq 0$ which satisfy
\begin{equation*}
    \begin{split}
        a_1&\geq b_1\\
        a_1+a_2&\geq b_1+b_2\\
        &\ldots\\
        a_1+a_2+\ldots+a_{n-1} &\geq b_1+b_2+\ldots+b_{n-1}\\
        a_1+a_2+\ldots+a_{n} &\geq b_1+b_2+\ldots+b_{n}
    \end{split}
\end{equation*}
Let $f$ be a concave and non-increasing function in $[0,+\infty)$. Then, we have
\begin{equation*}
    f(a_1)+f(a_2)+\ldots+f(a_n) \leq f(b_1)+f(b_2)+\ldots+f(b_n).
\end{equation*}
\end{lemma}
\begin{proof}[Proof of Lemma \ref{lemma:quasi_karamata}]
We prove the Lemma by using induction. The Lemma is obvious when $n=1$. 

Suppose that the problem is true for $n-1$. For $i \in \{1,2,\ldots,n\}$, let 
\begin{equation*}
c_i = a_1+a_2+\ldots+a_i - b_1-b_2-\ldots-b_i.
\end{equation*}
Let $k$ ($1\leq k \leq n$) be the smallest index such that $c_k = \min_{1\leq i\leq n} c_i$. Let $b'_1 = b_1+c_k$, then, we have $b'_1\geq b_2\geq \ldots\geq b_n$. We consider two cases:
\begin{itemize}
    \item If $k = n$, we have 
    \begin{equation*}
        \begin{split}
            a_1&\geq b'_1\\
            a_1+a_2&\geq b'_1+b_2\\
            &\ldots\\
            a_1+a_2+\ldots+a_n&= b'_1+b_2+\ldots+b_n
        \end{split}
    \end{equation*}
    Applying Karamata's inequality, we have $$f(a_1)+f(a_2)+\ldots+f(a_n)\leq f(b'_1)+f(b_2)+\ldots +f(b_n) \leq f(b_1)+f(b_2)+\ldots+f(b_n)$$.
    \item If $k < n$. We have
    \begin{equation*}
        \begin{split}
            a_1&\geq b'_1\\
            a_1+a_2&\geq b'_1+b_2\\
            &\ldots\\
            a_1+a_2+\ldots+a_k&= b'_1+b_2+\ldots+b_k.
        \end{split}
    \end{equation*}
    Applying Karamata's inequality, we have $$f(a_1)+f(a_2)+\ldots+f(a_k)\leq f(b'_1)+f(b_2)+\ldots +f(b_n) \leq f(b_1)+f(b_2)+\ldots+f(b_k)$$.
    
    Moreover, when $i > k$, we have
    \begin{equation*}
        a_{k+1}+\ldots+a_i - b_{k+1} -\ldots-b_i = c_i-c_k \geq 0.
    \end{equation*}
    Applying the inductive assumption for two arrays $a_{k+1},\ldots,a_n$ and $b_{k+1},\ldots,b_{n}$, we have
    \begin{equation*}
        f(a_{k+1})+f(a_{k+2})+\ldots+f(a_n) \leq f(b_{k+1})+f(b_{k+2})+\ldots+f(b_n).
    \end{equation*}
    which implies 
    \begin{equation*}
    f(a_1)+f(a_2)+\ldots+f(a_n) \leq f(b_1)+f(b_2)+\ldots+f(b_n).
\end{equation*}
\end{itemize}
As a consequence, we obtain the conclusion of the Lemma.
\end{proof}

\begin{lemma}[Extension of rearrangement inequality]
\label{lemma:compare_sorted_convolution}
Let $a_1\geq a_2\ldots\geq a_n\geq 0$, $b_1\geq b_2\geq \ldots\geq b_n\geq 0$ be two decreasing array. Consider $\alpha$ and $\beta$ are two injections from the set $\{1,2,\ldots,k\}$ to $\{1,2\ldots,n\}$ in the set $\{1,2,\ldots,n\}$, where $k$ is a positive integer such that $k\leq n$, we have 
\begin{equation*}
    \max a_{\alpha(1)}b_{\beta(1)}+\ldots+a_{\alpha(k)}b_{\beta(k)} = a_1b_1+a_2b_2+\ldots+a_kb_k,
\end{equation*}
and the maximum value is achieved when $\alpha,\beta$ are two permutation which satisfy
\begin{equation*}
    a_{\alpha(i)} = a_i, \quad b_{\beta(i)} = b_i \quad \forall i:  1\leq i \leq k.
\end{equation*}
\end{lemma}
\begin{proof}[Proof of Lemma \ref{lemma:compare_sorted_convolution}]
Let $I = \{i_1,i_2,\ldots,i_t\}$ be the indices which are not greater than $k$ such that $\alpha(i_r) > k,\forall r: 1 \leq r \leq t$. Let $J = \{j_1,j_2\ldots,j_t\}$ be the indices which are not greater than $k$ such that $\alpha(r)\notin J, \forall r: 1\leq r\leq k$. Consider the permutation $\Tilde{\alpha}:\{1,2,\ldots,k\} \to \{1,2,\ldots,k\}$ which is defined by 
\begin{equation*}
    \tilde{\alpha}(j) = \begin{cases}
    \alpha(j),\quad &\text{if } j \notin I\\
    j_r \quad & \text{if } j = i_r
    \end{cases}.
\end{equation*}
It is obviously that
\begin{equation*}
    a_{\alpha(1)}b_{\beta(1)}+\ldots+a_{\alpha(k)}b_{\beta(k)} \leq a_{\tilde{\alpha}(1)}b_{\beta(1)}+\ldots+a_{\tilde{\alpha}(k)}b_{\beta(k)}.
\end{equation*}
Similarly, there exist a permutation $\tilde{\beta}:\{1,2,\ldots,k\} \to \{1,2,\ldots,k\}$ such that 
\begin{equation*}
    a_{\alpha(1)}b_{\beta(1)}+\ldots+a_{\alpha(k)}b_{\beta(k)} \leq a_{\tilde{\alpha}(1)}b_{\beta(1)}+\ldots+a_{\tilde{\alpha}(k)}b_{\beta(k)} \leq a_{\tilde{\alpha}(1)}b_{\tilde{\beta}(1)}+\ldots+a_{\tilde{\alpha}(k)}b_{\tilde{\beta}(k)}.
\end{equation*}
Applying the rearrangement inequality, we have
\begin{equation*}
    a_{\tilde{\alpha}(1)}b_{\tilde{\beta}(1)}+\ldots+a_{\tilde{\alpha}(k)}b_{\tilde{\beta}(k)} \leq a_1b_1+a_2b_2+\ldots+a_kb_k,
\end{equation*}
which implies
\begin{equation*}
    a_{\alpha(1)}b_{\beta(1)}+\ldots+a_{\alpha(k)}b_{\beta(k)} \leq a_1b_1+a_2b_2+\ldots+a_kb_k.
\end{equation*}
From this, we can conclude our proof.
\end{proof}
Given the above lemmas, we have another proof of Lemma \ref{lemma:order_eigenvalues}.
\begin{proof}[Another Proof of Lemma~\ref{lemma:order_eigenvalues}]
Let us consider a permutation $(\widehat{x}_i)$ and $(\widehat{y}_i)$ of $(\tilde{x}_i)$ and $(\tilde{y}_i)$, respectively, such that $(\widehat{x}_i)$ and $(\widehat{y}_i)$ are decreasing sequence. The difference between
$\mathsf{D}\big( \{\widehat{x}_i\}, \{\widehat{y}_j\}; \{a_i\}, \{b_j\}\big)$
and $\mathsf{D}\big(\{\widetilde{x}_i \},\{\widetilde{y}_j\}, \{a_i\}, \{b_j\} \big)$ lies in the part
\begin{align*}
  -2 \sum_{k=1} \Big[x_k y_k- \frac{\varepsilon}{2}\Big]^+ + \varepsilon\Big[ \log(x_ky_k) - \log \frac{\varepsilon}{2}\Big]^+.
\end{align*}
Let $f(x) = -2\left(x-\dfrac{\varepsilon}{2}\right)^++\varepsilon\left(\log(x) -\log\left(\dfrac{\varepsilon}{2}\right)\right)$. This function has non-positive and decreasing derivative
\begin{equation*}
    f'(x) = \begin{cases}
     0, \quad &\text{when } x \leq \frac{\varepsilon}{2}\\
     \dfrac{\varepsilon}{x} - 2,\quad &\text{when } x \geq \frac{\varepsilon}{2}
    \end{cases}.
\end{equation*}
Thus, $f$ is concave function in $[0.\infty)$. Moreover, according to Lemma \ref{lemma:compare_sorted_convolution}, we have $(\widehat{x}_i\widehat{y}_i)$ and $(\tilde{x}_i\tilde{y}_i))$ satisfy the condition of Lemma \ref{lemma:quasi_karamata}. Thus, applying this Lemma with the function $f$, we have the term
\begin{align*}
  -2 \sum_{k=1} \Big[x_k y_k- \frac{\varepsilon}{2}\Big]^+ + \varepsilon\Big[ \log(x_ky_k) - \log \frac{\varepsilon}{2}\Big]^+.
\end{align*}
is minimized when $x_k$ and $y_k$ are decreasing sequence. The part $\sum_{i=1}^m \frac{x_i}{a_i} + \sum_{j=1}^n \frac{y_j}{b_j}$ is minimized by the rearrangement inequality.  Therefore, we obtain the conclusion of the lemma.  
\end{proof}
\section{Entropic $\mathsf{IGW}$ between Gaussian distributions with non-zero means}
\label{sec:nonzero_means}
In this appendix, by considering the problem of solving entropic Gromov-Wasserstein between balanced Gaussians in a special case, we show the difficulty to solve this problem in the general case. 

Let $m$ be a positive integer. Let $\mu = \mathcal{N}(\theta_\mu,\Sigma_\mu)$ and $\nu = \mathcal{N}(1,1)$ be two Gaussian measures in $\mathbb{R}^m$ and $\mathbb{R}$, respectively, where
\begin{align*}
    \Sigma_{\mu} &= \diag\big(a_1^2,\ldots, a_m^2 \big),\\
    \theta_\mu &= (b_1, \ldots, b_m)^{\top}.
\end{align*}
To find $\mathsf{IGW}_\varepsilon(\mu,\nu)$, we must find the joint distribution $\pi$ between $\mu$ and $\nu$ which has the covariance matrix
\begin{align*}
    \Sigma_{\pi} = \begin{pmatrix}\Sigma_{\mu} & K_{\mu\nu} \\
    K_{\mu\nu}^{\top} & 1
    \end{pmatrix} \label{eq:plan_structure_balanced}
\end{align*}
that minimizes the following term
\begin{equation*}
    \E_{\pi\otimes\pi} \Big\{ \big[ \langle X,X^{\prime}\rangle - \langle Y,Y^{\prime}\rangle \big]^2 \Big\} + \varepsilon \KL\big(\pi \| \mu\otimes \nu \big).
\end{equation*}
Note that, in this case, $K_{\mu\nu}$ is an $m\times 1$ matrix, thus, we can denote that $K_{\mu\nu}=(x_1, \ldots, x_m)^\top$.
Firstly, we prove the following equality
\begin{align}
\E_{\pi\otimes\pi} \Big\{ \big[ \langle X,X^{\prime}\rangle - \langle Y,Y^{\prime}\rangle \big]^2 \Big\}  = \sum_{i=1}^ma_i^2+\sum_{i=1}^mb_i^2 + 2 - 2 \sum_{i=1}^m(x_i+b_i)^2. \label{eq:first_equality_balanced_non_zero_mean}
\end{align}
Indeed, we have
\begin{align*}
    \E_{\pi\otimes\pi} \Big\{ \big[ \langle X,X^{\prime}\rangle - \langle Y,Y^{\prime}\rangle \big]^2 \Big\}  = \E_{\pi\otimes\pi} \langle X,X^{\prime}\rangle^2 + \E_{\pi\otimes\pi} \langle Y, Y^{\prime}\rangle^2 - 2 \E_{\pi\otimes\pi} \langle X, X^{\prime}\rangle \langle Y, Y^{\prime}\rangle.
\end{align*}
Note that the independence between $X$ and $X^{\prime}$ and $Y$ and $Y^{\prime}$ leads to
\begin{align*}
    \E_{\pi\otimes\pi} \langle X,X^{\prime}\rangle^2 &=[\mathbb{E}_{\pi\otimes\pi}\langle X,X^\prime\rangle]^2+\text{Var}_{\pi}\langle X,X^\prime\rangle\\
    &=[\langle \theta_\mu, \theta_\mu\rangle]^2+\sum_{i=1}^m\text{Var}_{\pi}[X_i]\text{Var}_{\pi}[X^\prime_i]\\ 
    &=\left(\sum_{i=1}^mb_i^2\right)^2+\sum_{i=1}^ma_i^4.
\end{align*}
Similarly, we have $ \E_{\pi\otimes\pi} \langle Y,Y^{\prime}\rangle^2 = 2$.
Meanwhile,
\begin{align*}
    \E_{\pi\otimes\pi} \langle X, X^{\prime}\rangle \langle Y, Y^{\prime}\rangle &= \E_{\pi\otimes\pi} \Big\{\sum_{i=1}^{m} \big[X_iX^\prime_{i}YY^{\prime}\big]\Big\} = \sum_{i=1}^m \E_{\pi\otimes\pi}\{X_i Y\}\E_{\pi}\{X^\prime_i Y'\} = \sum_{i=1}^m(x_i+b_i)^2.
\end{align*}
Putting the above results together, we obtain the desired equality~\eqref{eq:first_equality_balanced_non_zero_mean}. By Lemma~\ref{lemma:min_KL_Gaussian}, the optimal transportation plan $\pi^{*}$ is a Gaussian distribution, namely, $\pi^*=\mathcal{N}(\theta_{\pi^*},\Sigma_{\pi^*})$, where $\theta_{\pi^*}=(b_1, \ldots, b_n, 1)^\top$. Thus, according to Lemma \ref{lemma:KL_calculate}, the entropic term in the objective function reads
\begin{align*}
   \varepsilon \KL(\pi^{*}\|\mu \otimes \nu)& = \frac{1}{2}\varepsilon\Big\{\tr\big(\Sigma_{\pi^{*}} \Sigma^{-1}_{\mu\otimes \nu}\big) - (m+n) + \log\Big( \frac{\det(\Sigma_{\mu\otimes \nu})}{\det(\Sigma_{\pi^{*}})}\Big) \Big\} \\
   & = -\frac{\varepsilon}{2}\log\left(1-\sum_{i=1}^m\dfrac{x_i^2}{a_i^2}\right).
\end{align*}
From the above calculation, our problem turns out to find the non-negative real numbers $x_1, \ldots, x_m$ which maximize the following function
\begin{equation*}
    \psi(x_1,\ldots,x_m) = 2\sum_{i=1}^m(x_i+b_i)^2+\frac{\varepsilon}{2} \log\left(1-\sum_{i=1}^m\dfrac{x_i^2}{a_i^2}\right).
\end{equation*}
Taking the derivatives of the above function with respect to $x_i$, $i\in [m]$, we get
\begin{equation*}
    \frac{\partial \psi}{\partial x_i} = 4(x_i+b_i)-\dfrac{\varepsilon x_i/a^2_i}{1-\sum_{i=1}^m\frac{x_i^2}{a_i^2}}, \quad \forall i\in[m].
\end{equation*}
To solve the system of equations $\dfrac{\partial\psi}{\partial x_i}=0$ for all $i\in[m]$, we denote $t=\dfrac{\varepsilon}{1-\sum_{i=1}^m\frac{x_i^2}{a_i^2}}$. 
Then, $t = \dfrac{4(x_i+b_i)}{a_i^2x_i}$, which implies that $x_i = \dfrac{4a_i^2b_i}{t-4a_i^2}$. By substituting this representation of $x_i$ to the expression $t=\dfrac{\varepsilon}{1-\sum_{i=1}^m\frac{x_i^2}{a_i^2}}$, we obtain
\begin{equation*}
    t\left(1-\sum_{i=1}^m \dfrac{16a_i^2b_i^2}{(t-4a_i^2)^2}\right)=\varepsilon.
\end{equation*}
After some transformations, we achieve the equation of degree $2m$, which is difficult to solve in close-form.

\section{Revisiting the entropic optimal transport between Gaussian measures}
\label{sec:revisit_entropic_Gaussian}
In this appendix, via the technique that we use to study the entropic Gromov Wasserstein between two Gaussians, we provide a simpler proof for deriving the close-form expressions of entropic optimal transport (OT) between (unbalanced) Gaussian measures than the proof that was presented in~\citep{janati2020entropic}. 
\subsection{Entropic optimal transport between balanced Gaussian measures}
Before moving to the main theorem of this section, we review some backgrounds on the entropic optimal transport problem between balanced Gaussian measures.
\paragraph{Entropic Optimal Transport.} Let $\alpha$ and $\beta$ be two probability measures in $\mathbb{R}^d$. Then, the entropic OT between them is defined as follows:
\begin{align}\label{eq:entropic_OT_formulation}
    \mathsf{OT}_\varepsilon(\alpha,\beta):= \min_{\pi\in\Pi(\alpha,\beta)}~\mathbb{E}_{\pi}\|X-Y\|^2 + \varepsilon\KL(\pi\|\alpha\otimes\beta),
\end{align}
where $X\sim\alpha$ and $Y\sim\beta$ are independent random vectors in $\br^d$ such that $(X,Y)\sim\pi$, and $\varepsilon>0$ denotes the entropy-regularized parameter.
Now, we are ready to show the closed-form expression of entropic OT between Gaussians in the following theorem.
\begin{theorem}\label{theorem:entropic_OT}
Let $\mu=\mathcal{N}(\uU,\Sigma_\mu)$ and $\nu=\mathcal{N}(\vV,\Sigma_\nu)$ be two Gaussian measures in $\mathbb{R}^d$, where $\Sigma_\mu$ and $\Sigma_\nu$ are positive definite matrices. Then, the entropic optimal transport between $\mu$ and $\nu$ is given by
\begin{align*}
    \mathsf{OT}_{\varepsilon}(\mu,\nu) = \|\uU-\vV\|_2^2+\tr(\Sigma_\mu)+\tr(\Sigma_\nu)-\tr(D_\varepsilon)+\frac{d\varepsilon}{2}(1-\log\varepsilon)+\log\det\left(D_\varepsilon+\frac{\varepsilon}{2}\id\right),
\end{align*}
where $D_\varepsilon=\left(4\Sigma^{\frac{1}{2}}_{\mu}\Sigma_\nu\Sigma^{\frac{1}{2}}_\nu+\frac{\varepsilon^2}{4}\id\right)^\frac{1}{2}$. Moreover, the optimal transportation plan admits the form $\pi^*=\mathcal{N}\big((\uU^*,\vV^*)^{\top},\Sigma_{\pi^*}\big)$ where
\begin{align*}
    \Sigma_{\pi^*}:=\begin{pmatrix}
     \Sigma_\mu & K^*_{\mu\nu}\\
     (K^*_{\mu\nu})^{\top} & \Sigma_\nu
    \end{pmatrix},
\end{align*}
with $K^*_{\mu\nu}=\frac{1}{2}\Sigma^\frac{1}{2}_\mu\left(\frac{\varepsilon^2}{4}~\id+4~\Sigma^\frac{1}{2}_\mu\Sigma_\nu\Sigma^\frac{1}{2}_\mu\right)^\frac{1}{2}\Sigma^{-\frac{1}{2}}_\mu-\frac{\varepsilon}{4}\id$.
\end{theorem}
\begin{remark}
Note that if we replace $\varepsilon$ by $2\sigma^2$ in the above result, we obtain Theorem 1 in \citep{janati2020entropic}.
\end{remark}
\begin{proof}[Proof of Theorem~\ref{theorem:entropic_OT}]
Note that if $\bar{\mu}$ and $\bar{\nu}$ are respective centered transformations of $\mu$ and $\nu$, then 
\begin{align*}
    \mathsf{OT}_{\varepsilon}(\mu,\nu)=\mathsf{OT}_{\varepsilon}(\bar{\mu},\bar{\nu})+\|\uU-\vV\|_2^2.
\end{align*}
Therefore, it is sufficient to solve the case when $\uU=\vV=\zeros_d$. Under this assumption, we have
\begin{align*}
    \mathbb{E}_{\pi}\|X-Y\|^2 &= \mathbb{E}_{\pi}\|X\|^2+\mathbb{E}_{\pi}\|Y\|^2 - 2\mathbb{E}_{\pi}\langle X,Y\rangle\\
    &= \tr(\Sigma_\mu) + \tr(\Sigma_\nu) - 2\tr(K_{\mu\nu}).
\end{align*}
Thus, the entropic optimal transport between $\mu$ and $\nu$ can be rewritten as
\begin{align}
    \label{eq:entropic_OT_1}
    \mathsf{OT}_{\varepsilon}(\mu,\nu)=\tr(\Sigma_\mu)+\tr(\Sigma_\nu)+\min_{\pi\in\Pi(\mu,\nu)}\Big\{\varepsilon\KL(\pi\|\mu\otimes\nu)-2\tr(K_{\mu\nu})\Big\}.
\end{align}
By fixing the covariance matrix $\Sigma_\pi=\begin{pmatrix}
\Sigma_\mu & K_{\mu\nu}\\
K^{\top}_{\mu\nu} & \Sigma_\nu
\end{pmatrix}$, Lemma~\ref{lemma:min_KL_Gaussian} indicates that the optimal solution must be a Gaussian measure with the form $\pi=\mathcal{N}(\zeros_{2d},\Sigma_\pi)$. Let $U_{\mu\nu}\Lambda^{\frac{1}{2}}_{\mu\nu}V^{\top}_{\mu\nu}$ be the SVD of matrix $\Sigma^{-\frac{1}{2}}_{\mu}K_{\mu\nu}\Sigma^{-\frac{1}{2}}_{\nu}$ with $\Lambda^{\frac{1}{2}}_{\mu\nu}=\diag(\kappa^{\frac{1}{2}}_{\mu\nu,i})_{i=1}^d$. It follows from Lemma~\ref{lemma:kl_divergence} and Lemma~\ref{lemma:entropic_gw:inner_product:closed_form:supporting} part (b) that
\begin{align*}
    \varepsilon\KL(\pi\|\mu\otimes\nu)&=\frac{\varepsilon}{2}\left[\tr(\Sigma_{\pi}\Sigma^{-1}_{\mu\otimes\nu})-2d+\log\left(\frac{\det(\Sigma_{\mu\otimes\nu})}{\det(\Sigma_{\pi})}\right)\right] = -\frac{\varepsilon}{2}\sum_{i=1}^d\log(1-\kappa_{\mu\nu,i}).
\end{align*}
Let $U\Lambda^\frac{1}{2} V^{\top}$ be the SVD of $\Sigma^{\frac{1}{2}}_\nu\Sigma^{\frac{1}{2}}_\mu$, where $\Lambda^{\frac{1}{2}}=\diag(\lambda^{\frac{1}{2}}_{\mu\nu,i})_{i=1}^d$. By using von Neumann's inequality \citep{kristof1969neumann,horn_johnson_1991}, we have
 \begin{align*}
     \tr(K_{\mu\nu})&=\tr(\Sigma^{\frac{1}{2}}_\mu U_{\mu\nu}\Lambda^{\frac{1}{2}}_{\mu\nu}V^{\top}_{\mu\nu}\Sigma^{\frac{1}{2}}_\nu)=\tr(U_{\mu\nu}\Lambda^{\frac{1}{2}}_{\mu\nu}V^{\top}_{\mu\nu}\Sigma^{\frac{1}{2}}_\nu\Sigma^{\frac{1}{2}}_\mu)= \tr([V^{\top}U_{\mu\nu}\Lambda^{\frac{1}{2}}_{\mu\nu}][V^{\top}_{\mu\nu}U\Lambda^{\frac{1}{2}}])\\
     &\leq \sum_{i=1}^d\lambda^{\frac{1}{2}}_{\mu\nu,i}\kappa^{\frac{1}{2}}_{\mu\nu,i}.
 \end{align*}
The equality occurs when $U_{\mu\nu}=V$ and $V_{\mu\nu}=U$. Collecting the above two results, the problem in equation \eqref{eq:entropic_OT_1} is reduced to
\begin{align*}
    \mathsf{OT}_{\varepsilon}(\mu,\nu)=\tr(\Sigma_\mu)+\tr(\Sigma_\nu)-\max_{\kappa_{\mu\nu,i}\in(0,1)}\sum_{i=1}^d\left[2\lambda^{\frac{1}{2}}_{\mu\nu,i}\kappa^{\frac{1}{2}}_{\mu\nu,i}+\frac{\varepsilon}{2}\log(1-\kappa_{\mu\nu})\right].
\end{align*}
Note that the function $f(x)=2ax^{\frac{1}{2}}+\frac{\varepsilon}{2}\log(1-x)$ attains its maximum at $x=\left(\frac{-\varepsilon+\sqrt{\varepsilon^2+16a^2}}{4a}\right)^2$. Then, the optimal solution of the above problem is $\kappa^*_{\mu\nu,i}=\frac{\left(-\varepsilon+\sqrt{\varepsilon^2+16\lambda_{\mu\nu,i}}\right)^2}{16\lambda_{\mu\nu,i}}$, which leads to 
\begin{align*}
    \mathsf{OT}_{\varepsilon}(\mu,\nu)&=\tr(\Sigma_\mu)+\tr(\Sigma_\nu)-\dfrac{1}{2}\sum_{i=1}^d\left[-\varepsilon+\sqrt{\varepsilon^2+16\lambda_{\mu\nu,i}}+\varepsilon\log\left(\dfrac{\varepsilon(-\varepsilon+\sqrt{\varepsilon^2+16\lambda_{\mu\nu,i}})}{8\lambda_{\mu\nu,i}}\right)\right]\\
    &=\tr(\Sigma_\mu)+\tr(\Sigma_\nu)+\dfrac{d\varepsilon}{2}(1-\log\varepsilon)-\dfrac{1}{2}\sum_{i=1}^d\sqrt{\varepsilon^2+16\lambda_{\mu\nu,i}}-\dfrac{\varepsilon}{2}\sum_{i=1}^d\log\left(\dfrac{-\varepsilon+\sqrt{\varepsilon^2+16\lambda_{\mu\nu,i}}}{8\lambda_{\mu\nu,i}}\right)\\
    &=\tr(\Sigma_\mu)+\tr(\Sigma_\nu)+\dfrac{d\varepsilon}{2}(1-\log\varepsilon)-\sum_{i=1}^d\sqrt{\frac{\varepsilon^2}{4}+4\lambda_{\mu\nu,i}}+\dfrac{\varepsilon}{2}\log\left(\prod_{i=1}^d\left[\dfrac{\varepsilon}{2}+\sqrt{\dfrac{\varepsilon^2}{4}+4\lambda_{\mu\nu,i}}~\right]\right)\\
    &=\tr(\Sigma_\mu)+\tr(\Sigma_\nu) +\dfrac{d\varepsilon}{2}(1-\log\varepsilon)-\tr\left(\Big[4\Sigma^{\frac{1}{2}}_{\mu}\Sigma_\nu\Sigma^{\frac{1}{2}}_\mu+\frac{\varepsilon^2}{4}\id\Big]^{\frac{1}{2}}\right)+\log\det\left(\Big[4\Sigma^{\frac{1}{2}}_{\mu}\Sigma_\nu\Sigma^{\frac{1}{2}}_\mu+\frac{\varepsilon^2}{4}\id\Big]^{\frac{1}{2}}+\frac{\varepsilon}{2}\id\right).
\end{align*}
The last equality results from the fact that $(\lambda_{\mu\nu,i})_{i=1}^d$ are eigenvalues of $\Sigma^{\frac{1}{2}}_{\mu}\Sigma_\nu\Sigma^{\frac{1}{2}}_\mu$. Additionally, 
\begin{align*}
    K^*_{\mu\nu}&=\Sigma^{\frac{1}{2}}_\mu V\diag\left(\frac{-\varepsilon+\sqrt{\varepsilon^2+16\lambda_{\mu\nu,i}}}{4\lambda^{\frac{1}{2}}_{\mu\nu,i}}\right)_{i=1}^dU^{\top}\Sigma^{\frac{1}{2}}_\nu\\
    &=\frac{1}{2}~\Sigma^{\frac{1}{2}}_\mu V\diag\left(\Big[\frac{\varepsilon^2}{4\lambda_{\mu\nu,i}}+4\Big]^\frac{1}{2}\right)_{i=1}^dU^{\top}\Sigma^{\frac{1}{2}}_\nu-   \frac{\varepsilon}{4}~\Sigma^{\frac{1}{2}}_\mu V\diag\left(\lambda^{-\frac{1}{2}}_{\mu\nu,i}\right)_{i=1}^dU^{\top}\Sigma^{\frac{1}{2}}_\nu.
\end{align*}
Recall that $\Sigma^{\frac{1}{2}}_\nu\Sigma^{\frac{1}{2}}_\mu=U\Lambda^\frac{1}{2}V^{\top}$, or equivalently $U^{\top}\Sigma^\frac{1}{2}_\nu=\Lambda^\frac{1}{2}V^{\top}\Sigma^{-\frac{1}{2}}_\mu$. Then, we have
\begin{align*}
    K^*_{\mu\nu}&=\frac{1}{2}~\Sigma^{\frac{1}{2}}_\mu V\diag\left(\Big[\frac{\varepsilon^2}{4\lambda_{\mu\nu,i}}+4\Big]^\frac{1}{2}\right)_{i=1}^d\diag\left(\lambda^{\frac{1}{2}}_{\mu\nu,i}\right)_{i=1}^dV^{\top}\Sigma^{-\frac{1}{2}}_\mu-\frac{\varepsilon}{4}~\Sigma^{\frac{1}{2}}_\mu (U\Lambda^{\frac{1}{2}}V^{\top})^{-1}\Sigma^{\frac{1}{2}}_\nu\\
    &=\frac{1}{2}~\Sigma^{\frac{1}{2}}_\mu V\diag\left(\Big[\frac{\varepsilon^2}{4}+4\lambda_{\mu\nu,i}\Big]^\frac{1}{2}\right)_{i=1}^dV^{\top}\Sigma^{-\frac{1}{2}}_\mu-\frac{\varepsilon}{4}~\id.
\end{align*}
As a result of SVD property, we get $V\diag(\lambda_{\mu\nu,i})_{i=1}^dV^{\top}=\Sigma^\frac{1}{2}_\mu\Sigma_\nu\Sigma^\frac{1}{2}_\mu$. Thus, $V\diag\left(\frac{\varepsilon^2}{4}+4\lambda_{\mu\nu,i}\right)_{i=1}^dV^{\top}=\frac{\varepsilon^2}{4}\id+4\Sigma^\frac{1}{2}_\mu\Sigma_\nu\Sigma^\frac{1}{2}_\mu$, which follows that
\begin{align*}
    V\diag\left(\Big[\frac{\varepsilon^2}{4}+4\lambda_{\mu\nu,i}\Big]^\frac{1}{2}\right)_{i=1}^dV^{\top}=\left(\frac{\varepsilon^2}{4}~\id+4~\Sigma^\frac{1}{2}_\mu\Sigma_\nu\Sigma^\frac{1}{2}_\mu\right)^\frac{1}{2}.
\end{align*}
Consequently, we obtain
\begin{align*}K^*_{\mu\nu}=\frac{1}{2}\Sigma^\frac{1}{2}_\mu\left(\frac{\varepsilon^2}{4}~\id+4~\Sigma^\frac{1}{2}_\mu\Sigma_\nu\Sigma^\frac{1}{2}_\mu\right)^\frac{1}{2}\Sigma^{-\frac{1}{2}}_\mu-\frac{\varepsilon}{4}\id = \Sigma^\frac{1}{2}_\mu\left(\frac{\varepsilon^2}{16}~\id+~\Sigma^\frac{1}{2}_\mu\Sigma_\nu\Sigma^\frac{1}{2}_\mu\right)^\frac{1}{2}\Sigma^{-\frac{1}{2}}_\mu-\frac{\varepsilon}{4}\id.
\end{align*}
 Hence, the proof is completed.
\end{proof}
\subsection{Entropic optimal transport between unbalanced Gaussian measures}
Prior to presenting the closed-form expression of entropic optimal transport between unbalanced Gaussian measures, let us review the formulation of entropic unbalanced optimal transport (UOT).
\paragraph{Entropic Unbalanced Optimal Transport.}When either $\alpha$ or $\beta$ is not a probability measure, the formulation in equation~\eqref{eq:entropic_OT_formulation} is no longer valid. One solution to deal with this issue is using entropic UOT, which is given by
\begin{align*}
    \mathsf{UOT}_{\varepsilon,\tau}(\alpha,\beta):=\min_{\pi\in\mathcal{M}^+(\mathbb{R}^d\times\mathbb{R}^d)}~\mathbb{E}_{\pi}\|X-Y\|^2 + \tau\KL(\pi_x\|\alpha)+\tau\KL(\pi_y\|\beta)+\varepsilon\KL(\pi\|\alpha\otimes\beta),
\end{align*}
where $X,Y$ are independent random vectors in $\br^d$ such that $(X,Y)\sim\pi$ while $\varepsilon,\tau>0$ are regularized parameters, and $\pi_x,\pi_y$ are the marginal distributions of the coupling $\pi$ corresponding to $\alpha$ and $\beta$, respectively.

Consider two unbalanced Gaussian measures in $\br^d$:
\begin{align}\label{eq:unbalanced_Gaussians_form}
    \mu=m_\mu\mathcal{N}(\uU,\Sigma_\mu) \quad \text{and} \quad \nu=m_\nu\mathcal{N}(\vV,\Sigma_\nu),
\end{align}
where $m_\mu,m_\nu>0$ are their masses and $\Sigma_\mu,\Sigma_\nu$ are positive definite matrices. To state our main result, we need to define some quantities which are necessary for our analysis. Firstly, let us denote   
\begin{align*}
    \Sigma_{\mu,\eta} := \eta \Sigma_{\mu}^{-1} + \id; \qquad \Sigma_{\nu,\eta} := \eta \Sigma_{\nu}^{-1} + \id,
    \end{align*}
where $\eta :=\frac{\tau + \varepsilon}{2}$. Next, we define
\begin{align*}
    A&:=\frac{1}{\tau} \left\{\Big[ \frac{\varepsilon^2}{4} \id + \tau (2\eta) B^{\top}B \Big]^{\frac{1}{2}} - \frac{\varepsilon}{2}\id\right\}, \text{ where } B:= \Sigma_{\nu,\eta}^{-\frac{1}{2}} \Sigma_{\mu,\eta}^{-\frac{1}{2}}, \\
    \Upsilon_{\uU^*,\vV^*} &:=  (\uU - \vV)^{\top}\left\{\id - (\Sigma_{\mu}+\Sigma_{\nu})\Big[\eta \id + \Sigma_{\mu}^{-1} + \Sigma_{\nu}^{-1} \Big]^{-1} \right\} (\uU -\vV), \\
    \Upsilon_{\Sigma^*} &:=2\eta\Big\{1-\log(\eta) +\log \det(A) - \log \det(B) -\frac{1}{2} \log \det(\Sigma_{\mu}\Sigma_{\nu}) \Big\} -\frac{\varepsilon}{2} \log\det\big(\id -A^2(B^{\top}B)^{-1}\big), \\
    \Upsilon^* &:= \Upsilon_{\uU^*,\vV^*} + \Upsilon_{\Sigma^*} + 2\eta d.
\end{align*}
\begin{theorem}
\label{theorem:entropic_UOT}
Let $\mu$ and $\nu$ be two unbalanced Gaussian measures given in equation~\eqref{eq:unbalanced_Gaussians_form}, the entropic UOT between $\mu$ and $\nu$ can be computed as
\begin{align*}
    \mathsf{UOT}_{\varepsilon,\tau}(\mu,\nu)=m_{\pi^*}\Upsilon^* + \varepsilon\KL(m_{\pi^*}\|m_\mu m_\nu) + \tau[\KL(m_{\pi^*}\|m_\mu)+\KL(m_{\pi^*}\|m_\nu)].
\end{align*}
where $m_{\pi^*} =  (m_{\mu}m_{\nu})^{\frac{\tau+\varepsilon}{2\tau+\varepsilon}} \exp\Big\{\frac{-\Upsilon^*}{2\tau + \varepsilon}\Big\}$. Moreover, the optimal transport plan is also an unbalanced Gaussian measure which admits the form $\pi^*=m_{\pi^*}\mathcal{N}\big( (\uU^*,\vV^*)^{\top}, \Sigma^*_{xy}\big)$, where 
\begin{align*}
    \uU^*&= \uU-\Sigma_{\mu}\Big[\eta\id + \Sigma_{\mu} + \Sigma_{\nu} \Big]^{-1} (\uU-\vV);\\
    \vV^* &= \vV + \Sigma_{\nu}\Big[\eta\id + \Sigma_{\mu}^{-1} + \Sigma_{\nu}^{-1} \Big]^{-1}(\uU - \vV);\\
    \Sigma_{xy}^* &= \begin{pmatrix}\Sigma_x^* & K_{xy}^* \\
    (K_{xy}^*)^{\top} &\Sigma_y^*
    \end{pmatrix},
\end{align*}
with 
\begin{align*}
&\Sigma_x^*=\eta \Sigma_{\mu,\eta}^{-\frac{1}{2}} A(B^{\top}B)^{-\frac{1}{2}} \Sigma_{\mu,\eta}^{-\frac{1}{2}}
;\qquad \Sigma_y^* = \eta \Sigma_{\nu,\eta}^{-\frac{1}{2}} A(B^{\top}B)^{-\frac{1}{2}}\Sigma_{\nu,\eta}^{-\frac{1}{2}};\\ 
& K_{xy}^* = \eta \Sigma_{\mu,\eta}^{-\frac{1}{2}}  A(\id-A)^{-1} (B^{\top}B)^{-\frac{1}{2}} B^{-1}\Sigma_{\nu,\eta}^{-\frac{1}{2}}.
\end{align*}
\end{theorem}

\begin{proof}[Proof of Theorem~\ref{theorem:entropic_UOT}]
We assume that $\pi$ is a positive measure such that $\pi = m_{\pi} \overline{\pi}$ with $\overline{\pi}$ is a probability measure with mean $(\uU_x, \vV_y)$ and covariance 
\begin{align*}
    \Sigma_{\pi} = \begin{pmatrix} \Sigma_x & K_{xy} \\
    K_{xy}^{\top} & \Sigma_y
    \end{pmatrix}.
\end{align*}
Here the two marginals of $\overline{\pi}$ are denoted by $\overline{\pi}_x$ and $\overline{\pi}_y$ where $\overline{\pi}_x$ has mean $\uU_x$ and variance $\Sigma_x$ while $\overline{\pi}_y$ has mean $\vV_y$ and variance $\Sigma_y$.
Let $\pi_x$ and $\pi_y$ be two marginals of $\pi$, then $\pi_x = m_{\pi} \overline{\pi}_x$ and $\pi_y = m_{\pi}\overline{\pi}_y$.

By Lemma \ref{lemma:min_KL_Gaussian}, $\pi$ needs to be Gaussian.  We also have 
\begin{align*}
    \mathbb{E}_{\pi}\|X-Y\|^2&= m_{\pi} \Big\{ \|\uU_x- \vV_y\|^2 +\tr(\Sigma_x)+\tr(\Sigma_y)-2\tr(K_{xy})\Big\}.
\end{align*}
According to Lemma~\ref{lemma:kl_divergence},
\begin{align*}
    \KL(\pi_x\|\mu)&=m_\pi\KL(\overline{\pi}_x\|\overline{\mu})+\KL(m_\pi\|m_\mu),\\
    \KL(\pi_y\|\nu)&=m_\pi\KL(\overline{\pi}_y\|\bar{\nu})+\KL(m_\pi\|m_\nu),\\
    \KL(\pi\|\mu\otimes\nu)&=m_\pi\KL(\overline{\pi}\|\overline{\mu}\otimes\overline{\nu})+\KL(m_\pi\|m_\mu m_\nu).
\end{align*}
Combining the above results, the entropic optimal transport between $\mu$ and $\nu$ reads as
\begin{align*}
    \mathsf{UOT}_{\varepsilon,\tau}(\mu,\nu)=\min_{\pi\in\mathcal{M}^+(\mathbb{R}^d\times\mathbb{R}^d)}\Big\{m_\pi\Upsilon + \varepsilon\KL(m_\pi\|m_\mu m_\nu) + \tau[\KL(m_\pi\|m_\mu)+\KL(m_\pi\|m_\nu)]\Big\},
\end{align*}
where $\Upsilon:=\|\uU_x - \vV_y\|^2 + \tr(\Sigma_x)+\tr(\Sigma_y)-2\tr(K_{xy})+\varepsilon\KL(\overline{\pi}\|\bar{\mu}\otimes\overline{\nu})+\tau\big[\KL(\overline{\pi}_x\|\overline{\mu})+\KL(\overline{\pi}_y\|\overline{\nu})\big]$.
We divide our proof into two main steps. The first step is to minimize $\Upsilon$, and the second step is to find the optimal scale $m_{\pi^*}$.

\paragraph{Step 1: Minimization of $\Upsilon$} For the three KL terms, we have
\begin{align*}
    \mathsf{KL}(\overline{\pi}_x \|\overline{\mu}) &= \frac{1}{2} \Big\{\tr(\Sigma_x \Sigma_{\mu}^{-1}) - d - \log\det(\Sigma_x) + \log\det(\Sigma_{\mu}) \Big\} + \frac{1}{2} (\uU_x - \uU)^{\top}\Sigma_\mu^{-1} (\uU_x - \uU), \\
    \mathsf{KL}(\overline{\pi}_y \|\overline{\nu}) &= \frac{1}{2} \Big\{\tr(\Sigma_y \Sigma_{\nu}^{-1}) - d - \log\det(\Sigma_y) + \log\det(\Sigma_{\nu}) \Big\} + \frac{1}{2}(\vV_y - \vV)^{\top}\Sigma_\nu^{-1} (\vV_y -\vV ), \\
    \mathsf{KL}(\overline{\pi} \| \overline{\mu} \otimes \overline{\nu} ) &=\frac{1}{2} \Big\{\tr(\Sigma_x \Sigma_{\mu}^{-1}) - d - \log\det(\Sigma_x) + \log\det(\Sigma_{\mu}) \Big\} + \frac{1}{2} \Big\{\tr(\Sigma_y \Sigma_{\nu}^{-1}) - d - \log\det(\Sigma_y) + \log\det(\Sigma_{\nu}) \Big\} \\
    &\qquad +\frac{1}{2} (\uU_x-\uU, \vV_y- \vV)^{\top}\Sigma_{\mu\otimes \nu}^{-1} (\uU_x-\uU,\vV_y - \vV)   - 
     \frac{1}{2} 
    \sum_{i=1}^d \log(1-\kappa_{xy,i}),
\end{align*}
where 
\begin{align*}
\Sigma_{\mu\otimes \nu} = \begin{pmatrix} \Sigma_{\mu} & \mathbf{0}_{d\times d} \\
\mathbf{0}_{d\times d} & \Sigma_{\nu}
\end{pmatrix},
\end{align*}
and $\kappa^{\frac{1}{2}}_{xy,i}$ is the $i$-th largest singular value of $\Sigma_x^{-\frac{1}{2}}K_{xy}\Sigma^{-\frac{1}{2}}_y$ for all $i\in [d]$. Here we use the equation $\tr(\Sigma_{\pi}\Sigma_{\mu\otimes \nu}^{-1}) = \tr(\Sigma_x\Sigma_{\mu}^{-1}) + \tr(\Sigma_y\Sigma_{\nu}^{-1})$ and results from part (b) of Lemma \ref{lemma:entropic_gw:inner_product:closed_form:supporting}. Put the results together, we get
\begin{align*}
    \Upsilon &= \tr(\Sigma_x)+\tr(\Sigma_y)-2\tr(K_{xy})+\eta \Big\{ \tr(\Sigma_x \Sigma_{\mu}^{-1}) -\log \det(\Sigma_x) + \tr(\Sigma_y \Sigma_{\nu}^{-1}) -\log \det(\Sigma_y)  \Big\} -\frac{\varepsilon}{2}\sum_{i=1}^d\log(1-\kappa_{xy,i})\\ 
    & +\|\uU_x - \vV_y\|^2+ \frac{1}{2} \Big\{\tau (\uU_x - \uU)^{\top}\Sigma_\mu^{-1} (\uU_x - \uU) + \tau(\vV_y - \vV)^{\top}\Sigma_\nu^{-1} (\vV_y -\vV ) + \varepsilon (\uU_x-\uU, \vV_y- \vV)^{\top}\Sigma_{\mu\otimes\nu}^{-1} (\uU_x-\uU,\vV_y - \vV)  \Big\}\\
    &+ 2\eta d.
\end{align*}

\paragraph{The means part:}
We first work with the terms involving $\uU_x$ and $\vV_y$. Let $\uU_x - \uU = \widetilde{\uU}_{x}$ and $\vV_y - \vV = \widetilde{\vV}_{y}$, then
\begin{align*}
    (\widetilde{\uU}_{x}, \widetilde{\vV}_{y})^{\top} \Sigma_{\mu\otimes \nu}^{-1} (\widetilde{\uU}_{x}, \widetilde{\vV}_{y}) &= \widetilde{\uU}_{x}^{\top}\Sigma_{\mu}^{-1}\widetilde{\uU}_{x} + \widetilde{\vV}_{y}\Sigma_{\nu}^{-1}\widetilde{\vV}_{y} \nonumber\\
    \|\uU_x - \vV_y\|^2 &= \|\widetilde{\uU}_{x} - \widetilde{\vV}_{y} + \uU - \vV\|^2 \\
    &= \|\widetilde{\uU}_{x}\|^2 + \|\widetilde{\vV}_{y}\|^2 - 2\widetilde{\uU}_{x}^{\top}\widetilde{\vV}_{y} + 2\widetilde{\uU}_{x}^{\top}(\uU - \vV) - 2\widetilde{\vV}^{\top}_y(\uU-\vV) + \|\uU-\vV\|^2.
\end{align*}
Hence, sum of all terms which include $\uU_x$ and $\vV_y$ is equal to
\begin{align} \label{equation:Upsilon_uv}
    \Upsilon_{\uU,\vV}:=& \widetilde{\uU}_x^{\top}\Big(\eta\Sigma_{\mu}^{-1}+\id \Big)\widetilde{\uU}_x + \widetilde{\vV}_y^{\top}\Big(\eta\Sigma_{\nu}^{-1}+\id \Big)\widetilde{\vV}_y- 2\widetilde{\uU}_x^{\top}\widetilde{\vV}_y + 2\widetilde{\uU}_x^{\top}(\uU-\vV) - 2\widetilde{\vV}_y^{\top}(\uU-\vV) + \|\uU-\vV\|^2 \nonumber \\
=&\widetilde{\uU}_x^{\top}\Sigma_{\mu,\eta}\widetilde{\uU}_x + \widetilde{\vV}_y^{\top}\Sigma_{\nu,\eta}\widetilde{\vV}_y- 2\widetilde{\uU}_x^{\top}\widetilde{\vV}_y + 2\widetilde{\uU}_x^{\top}(\uU-\vV) - 2\widetilde{\vV}_y^{\top}(\uU-\vV) + \|\uU-\vV\|^2
.\end{align}
Taking the derivative with respect to $\widetilde{\uU}_x$ and $\widetilde{\vV}_y$ of $\Upsilon_{\uU,\vV}$, and set the equation to zero we obtain
\begin{align*}
    \Sigma_{\mu,\eta}\widetilde{\uU}_x   - \widetilde{\vV}_y + (\uU - \vV) &= \mathbf{0}, \\
      \Sigma_{\nu,\eta}\widetilde{\vV}_y - \widetilde{\uU}_x - (\uU - \vV) &= \mathbf{0}.
\end{align*}
Adding two equations we get 
\begin{align*}
    (\eta\Sigma_{\mu}^{-1} + \id)\widetilde{\uU}_x - \widetilde{\vV}_y + (\eta\Sigma_{\nu}^{-1}+\id) \widetilde{\vV}_y - \widetilde{\uU}_x &= \mathbf{0}, \\
    \Sigma_{\mu}^{-1}\widetilde{\uU}_x  &=- \Sigma_{\nu}^{-1}\widetilde{\vV}_y.
\end{align*}
Replace them into the above equation, we obtain
\begin{align*}
    \Sigma_{\mu,\eta}\widetilde{\uU}_x  + \Sigma_{\nu}\Sigma_{\mu}^{-1} \widetilde{\uU}_x &= -(\uU - \vV) \\
    \widetilde{\uU}_x &=- \Big[\Sigma_{\mu,\eta} + \Sigma_{\nu}\Sigma_{\mu}^{-1}\Big]^{-1} (\uU-\vV) \\
    &= -\Sigma_{\mu}\Big[\eta\id + \Sigma_{\mu} + \Sigma_{\nu} \Big]^{-1} (\uU-\vV).
    \end{align*}
Similarly, we obtain
   $\widetilde{\vV}_y = \Sigma_{\nu}\Big[\eta\id + \Sigma_{\mu}^{-1} + \Sigma_{\nu}^{-1} \Big]^{-1}(\uU - \vV)$. The value of $\Upsilon_{\uU,\vV}$ at the minimizer is computed as follow
   \begin{align*}
\widetilde{\uU}_x^{\top}\Sigma_{\mu,\eta} \widetilde{\uU}_x - \widetilde{\uU}_x^{\top}\widetilde{\vV}_y &= - \widetilde{\uU}_x^{\top}(\uU-\vV)\\
\widetilde{\vV}_y^{\top}\Sigma_{\nu,\eta} \widetilde{\vV}_y - \widetilde{\uU}_x^{\top}\widetilde{\vV}_y &=  \widetilde{\vV}_y^{\top}(\uU-\vV).
   \end{align*}
Adding them together
\begin{align*}
    \widetilde{\uU}_x^{\top}\Sigma_{\mu,\eta} \widetilde{\uU}_x + \widetilde{\vV}_y^{\top}\Sigma_{\nu,\eta} \widetilde{\vV}_y- 2 \widetilde{\uU}_x^{\top}\widetilde{\vV}_y = -(\widetilde{\uU}_x -\widetilde{\vV}_y)^{\top}(\uU-\vV).
\end{align*}
Put it back to equation \eqref{equation:Upsilon_uv}
\begin{align*}
    \Upsilon_{\uU^*,\vV^*} = (\widetilde{\uU}_x - \widetilde{\vV}_y)^{\top}(\uU-\vV) + \|\uU-\vV\|^2 = (\uU - \vV)^{\top}\left\{\id - (\Sigma_{\mu}+\Sigma_{\nu})\Big[\eta \id + \Sigma_{\mu}^{-1} + \Sigma_{\nu}^{-1} \Big]^{-1} \right\} (\uU -\vV).
\end{align*}

\paragraph{The covariance part:}
The second part is to group all factors of $\Sigma_x$, $\Sigma_y$ and $\kappa_{xy,i}$, which is
\begin{align}
  \Upsilon_{\Sigma}:=& \tr(\Sigma_x) + \tr(\Sigma_y) - 2\tr(K_{xy}) + \eta\Big\{ \tr(\Sigma_x\Sigma_{\mu}^{-1}) - \log\Big(\frac{\det(\Sigma_x)}{\det(\Sigma_{\mu})}\Big) + \tr(\Sigma_y\Sigma_{\nu}^{-1}) - \log \Big(\frac{\det(\Sigma_y)}{\det(\Sigma_{\nu})} \Big)  \Big\} \nonumber \\
  &- \frac{\varepsilon}{2}\log \prod_{i=1}^d (1-\kappa_{xy,i}). \label{equation:Upsilon_Sigma}
\end{align}
Grouping terms which have the same factors, the above quantity is rewritten as  
\begin{align} \label{equation:Upsilon_Sigma_simple}
    \Upsilon_{\Sigma}=&\tr(\Sigma_x \Sigma_{\mu,\eta}) + \tr(\Sigma_y \Sigma_{\nu,\eta}) - 2 \tr(K_{xy}) - \eta \Big[\log\Big(\frac{\det(\Sigma_x)}{\det(\Sigma_{\mu})} \Big) + \log \Big(\frac{\det(\Sigma_y)}{ \det(\Sigma_{\nu}) }\Big)\Big] - \frac{\varepsilon}{2}\sum_{i=1}^d\log(1-\kappa_{xy,i}).
\end{align}
Let   $U_{xy} \Lambda_{xy} V_{xy}^{\top}$ be SVD of  $\Sigma_x^{-\frac{1}{2}}K_{xy}\Sigma_y^{-\frac{1}{2}}$ and denote
\begin{align} \label{equation:SVD_XY}
    \Sigma_x^{\frac{1}{2}} U_{xy} = X; \qquad V_{xy}^{\top}\Sigma_y^{\frac{1}{2}}= Y. 
\end{align}
Then we get
\begin{align*}
    XX^{\top} = \Sigma_x; \qquad Y^{\top}Y = \Sigma_y;\qquad \Lambda_{xy} = \mathsf{diag}\big(\kappa_{xy,i}^{\frac{1}{2}}\big).
\end{align*}
Note that if we fix $\Sigma_x$, $\Sigma_y$, $\Lambda_{xy}$, then we only need to maximize the term $\tr(K_{xy})$, which is equal to
$\tr(\Lambda_{xy} YX)$. We could choose $U_{xy}$ and $V_{xy}$ such that $YX = \mathsf{diag}(\lambda_{xy,i})$ where $\lambda_{xy,i}$ is the $i$th singular value of $\Sigma_y^{\frac{1}{2}}\Sigma_x^{\frac{1}{2}}$.  By von Neumann's trace inequality \citep{kristof1969neumann,horn_johnson_1991}, it is where the term $\tr(K_{xy})$ attains its maximum. Hence, $YX$ is a diagonal matrix when the objective function $\Upsilon_{\Sigma}$  attains its minimum. 

We also note that the form $\Sigma_x^{\frac{1}{2}}U_{xy}$ and $V_{xy}^{\top} \Sigma_y^{\frac{1}{2}}$ are the QR decompositions of $X$ and $Y$, respectively. Hence there is no condition on $X$ and $Y$, except that they are invertible.  Then $\Upsilon_{\Sigma}$ now is equal to 
\begin{align*}
  \Upsilon_{\Sigma}=&  \tr(XX^{\top} \Sigma_{\mu,\eta}) + \tr(Y^{\top}Y\Sigma_{\nu,\eta}) - 2\tr(X \Lambda_{xy}Y) - 2\eta\Big[\log\big( \det(X) \det(Y)\big) \Big] -\frac{\varepsilon}{2}\sum_{i=1}^d \log(1-\kappa_{xy,i}) + \mathsf{const}.
\end{align*}
Taking the derivative with respect to $X$ and $Y$ (since in a small local neighbourhood, they are still invertible), we get the following system of equations
\begin{align}
   2 \Sigma_{\mu,\eta} X - 2Y^{\top}\Lambda_{xy}^{\top} - 2\eta (X^{-1})^{\top} &= \mathbf{0} \label{equation:X_Lambda_xy}\\
   2Y \Sigma_{\nu,\eta} - 2\Lambda_{xy}^{\top} X^{\top} - 2\eta (Y^{-1})^{\top} &= \mathbf{0} \label{equation:Y_Lambda_xy}.
\end{align}
We multiply  the first equation with $X^{\top}$ on the left side and the second equation with $Y^{\top}$ on the right side,
\begin{align}
    X^{\top}\Sigma_{\mu,\eta} X &= (YX)^{\top}\Lambda_{xy} + \eta \id \label{equation:X_Lambda_tau_varepsilon} \\
    Y\Sigma_{\nu,\eta}Y^{\top} &= \Lambda_{xy}^{\top}(YX)^{\top} + \eta\id \label{equation:Y_Lambda_tau_varepsilon}.
\end{align}
The right hand side of \eqref{equation:X_Lambda_tau_varepsilon} and \eqref{equation:Y_Lambda_tau_varepsilon} are diagonal matrices, since $\Lambda_{xy}$ and $YX$ are diagonal matrices. It means that $X^{\top}\Sigma_{\mu,\eta}X$ is also a diagonal matrix.
Denote 
\begin{align}
    (YX)^{\top} \Lambda_{xy} + \eta\id = \Lambda_{xy,\eta}:= \mathsf{diag}(\lambda_{xy,\eta,i}). \label{equation:YX_Lambda_Diag}
\end{align}
Then
\begin{align*}
   X^{\top} \Sigma_{\mu,\eta}X =X^{\top} U_{\mu,\eta}\Lambda_{\mu,\eta}U^{\top}_{\mu,\eta} X   &= \Lambda_{xy,\eta} \\
     \Big[\Lambda_{\mu,\eta}^{\frac{1}{2}} \Lambda_{xy,\eta}^{-\frac{1}{2}} X^{\top} U_{\mu,\eta}\Big]\Lambda_{\mu,\eta} \Big[U^{\top}_{\mu,\eta} X \Lambda_{xy,\eta}^{-\frac{1}{2}}\Lambda_{\mu,\eta}^{\frac{1}{2}} \Big] &=\Lambda_{\mu,\eta},
\end{align*}
where $U_{\mu,\eta}\Lambda_{\mu,\eta}U^{\top}_{\mu,\eta}$ and $U_{\nu,\eta}\Lambda_{\nu,\eta}U^{\top}_{\nu,\eta}$ are the spectral decomposition of $\Sigma_{\mu,\eta}$ and $\Sigma_{\nu,\eta}$, respectively. 
The equation has the form $A\Lambda A^{\top} = \Lambda$, with $\Lambda$ is a diagonal matrix. Let $A\Lambda^{\frac{1}{2}} = \Lambda^{\frac{1}{2}}U$, then $\Lambda^{\frac{1}{2}}U U^{\top} \Lambda^{\frac{1}{2}} = A \Lambda A^{\top} =\Lambda$. It means that $UU^{\top} =\id$, then $U$ is unitary matrix. Thus we obtain $A = \Lambda^{\frac{1}{2}} U \Lambda^{-\frac{1}{2}}$. Replace $A$ by 
 $\Big[\Lambda_{\mu,\eta}^{\frac{1}{2}} \Lambda_{xy,\eta}^{-\frac{1}{2}} X^{\top} U_{\mu,\eta}\Big]$, there exists unitary matrix $U$ such that
 \begin{align}
     \Lambda_{\mu,\eta}^{\frac{1}{2}} \Lambda_{xy,\eta}^{-\frac{1}{2}} X^{\top} U_{\mu,\eta} &= \Lambda^{\frac{1}{2}}_{\mu,\eta} U \Lambda_{\mu,\eta}^{-\frac{1}{2}} \nonumber \\
      \Lambda_{xy,\eta}^{-\frac{1}{2}} X^{\top} U_{\mu,\eta} &= U \Lambda_{\mu,\eta}^{-\frac{1}{2}} \nonumber \\
     X^{\top}&= \Lambda_{xy,\eta}^{\frac{1}{2}}U \Lambda_{\mu,\eta}^{-\frac{1}{2}} U^{\top}_{\mu,\eta} \nonumber \\
     X&= U_{\mu,\eta} \Lambda_{\mu,\eta}^{-\frac{1}{2}} U^{\top}\Lambda_{xy,\eta}^{\frac{1}{2}} \label{equation:X}.
 \end{align}
 Similarly, there exists an unitary matrix $V$ such that
 \begin{align} \label{equation:Y}
     Y = \Lambda_{xy,\eta}^{\frac{1}{2}} V \Lambda_{\nu,\eta}^{-\frac{1}{2}} U_{\nu,\eta}^{\top}.
 \end{align}
 Combine the results with a note that $U_{\mu,\eta} = U_{\mu}$ and $U_{\nu,\eta} = U_{\nu}$, we obtain
 \begin{align}
     YX &= \Lambda_{xy,\eta}^{\frac{1}{2}} V \Lambda_{\nu,\eta}^{-\frac{1}{2}} U_{\nu,\eta}^{\top} U_{\mu,\eta} \Lambda_{\mu,\eta}^{-\frac{1}{2}} U^{\top}\Lambda_{xy,\eta}^{\frac{1}{2}} \nonumber\\
     (YX)\Lambda^{-1}_{xy,\eta} &= V\Lambda_{\nu,\eta}^{-\frac{1}{2}} U_{\nu}^{\top}U_{\mu}\Lambda_{\mu,\eta}^{-\frac{1}{2}} U^{\top}:=\mathsf{diag}(\alpha_i)_{i=1}^d. \label{eq:SVD_XY}
 \end{align}
 Here, $\alpha_i$ is the $i$th singular values of $U_{\nu}\Lambda_{\nu,\tau,\varepsilon}^{-\frac{1}{2}} U_{\nu}^{\top}U_{\mu}\Lambda_{\mu,\eta}^{-\frac{1}{2}} U_{\mu} $, where $U$ and $V$ could be chosen such that 
 $ V\Lambda_{\nu,\eta}^{-\frac{1}{2}} U_{\nu}^{\top}U_{\mu}\Lambda_{\mu,\tau,\varepsilon}^{-\frac{1}{2}} U$ is a diagonal matrix $\mathsf{diag}(\alpha_i)_{i=1}^d$. By the von Neumann's trace inequality \citep{kristof1969neumann,horn_johnson_1991},
 \begin{align*}
   \tr(K_{xy}) =  \tr(\Lambda_{xy}YX) \leq \sum_{i=1}^d \alpha_i\kappa_{xy,i}^{\frac{1}{2}} \lambda_{xy,\eta,i}. 
 \end{align*}
 The equality happens when $(\kappa_{xy,i}) $, $(\alpha_i)$ and $(\lambda_{xy,\eta,i})$ are decreasing sequences with $ V\Lambda_{\nu,\eta}^{-\frac{1}{2}} U_{\nu}^{\top}U_{\mu}\Lambda_{\mu,\eta}^{-\frac{1}{2}} U$  is diagonal.
 
 Substitute it into the equation \eqref{equation:YX_Lambda_Diag} with $\Lambda_{xy,\eta} := \mathsf{diag}(\lambda_{xy,\eta,i})_{i=1}^d$
 \begin{align}
     \lambda_{xy,\eta,i} &= \eta + \alpha_i \kappa_{xy,i}^{\frac{1}{2}} \lambda_{xy,\eta,i}\nonumber\\
     \lambda_{xy,\eta,i} &= \eta \Big[1-\alpha_i \kappa_{xy,i}^{\frac{1}{2}}\Big]^{-1}. \label{equation:lambda_xy_eta}
 \end{align}
 Combine it with equations \eqref{equation:X_Lambda_tau_varepsilon} and \eqref{equation:YX_Lambda_Diag}, we get
 \begin{align*}
     \det(X^{\top}\Sigma_{\mu,\eta}X) &= \det(X)^2 \det(\Sigma_{\mu,\eta})=  \eta^d \prod_{i=1}^d  \big(1-\alpha_i \kappa_{xy,i}^{\frac{1}{2}} \big)^{-1}. 
 \end{align*}
 Therefore,
 \begin{align}\label{equation:log_det_X}
     \log \det(X) = -\frac{1}{2} \sum_{i=1}^d\log\big(1-\alpha_i \kappa_{xy,i}^{\frac{1}{2}}\big) + \mathsf{const}.
 \end{align}
 Similarly, we also have
 \begin{align}\label{equation:log_det_Y}
     \log \det(Y) = - \frac{1}{2}\sum_{i=1}^d\log\big(1-\alpha_i \kappa_{xy,i}^{\frac{1}{2}}\big) + \mathsf{const}.
 \end{align}
 Put them \eqref{equation:X_Lambda_tau_varepsilon}, \eqref{equation:Y_Lambda_tau_varepsilon}, \eqref{equation:log_det_X}, \eqref{equation:log_det_Y} together, with $\Sigma^*$ to be the optimal solution of $X,Y$, etc,  we obtain
 \begin{align*}
     \Upsilon_{\Sigma^*}= 2\eta\sum_{i=1}^d \log\big(1-\alpha_i\kappa_{xy,i}^{\frac{1}{2}}) - \frac{\varepsilon}{2}\sum_{i=1}^d\log(1-\kappa_{xy,i}) + \mathsf{const}.
 \end{align*}
Let us consider the function for $\alpha, t \in (0,1)$ 
 \begin{align*}
     f(t) =2\eta \log\big(1-\alpha t^{\frac{1}{2}} \big) -\frac{\varepsilon}{2}\log(1-t).
 \end{align*}
 Taking derivative with respect to $t$ we get
 \begin{align*}
     f^{\prime}(t) = \frac{-2\eta\alpha}{2(1-\alpha t^{\frac{1}{2}}) t^{\frac{1}{2}}}   + \frac{\varepsilon}{2}\frac{1}{1-t} = \frac{\varepsilon (1-\alpha t^{\frac{1}{2}}) t^{\frac{1}{2}} -2\eta\alpha(1-t)}{2(1-\alpha t^{\frac{1}{2}}) t^{\frac{1}{2}} (1-t)} = \frac{(2\eta  - \varepsilon )\alpha t+ \varepsilon t^{\frac{1}{2}} -2\eta \alpha}{2(1-\alpha t^{\frac{1}{2}}) t^{\frac{1}{2}} (1-t)} 
 \end{align*}
 The nominator is a quadratic function of $t^{\frac{1}{2}}$, which is increasing function taking negative value with $t=0$ and positive value when $t = 1$, since $0<\alpha <1$. Thus, it has only one solution
 \begin{align*}
     \sqrt{t^*} = \frac{\sqrt{\varepsilon^2+ 8 \tau \eta \alpha^2 } -\varepsilon}{2\tau\alpha} = \frac{\sqrt{\varepsilon^2/4 +2\tau \eta \alpha^2} -\varepsilon/2}{\tau \alpha}.
 \end{align*}
 It also deduces that if $\alpha$ increases, the $t^*$ also increases.  Substituting the result into the equation \eqref{equation:lambda_xy_eta}, we obtain
 \begin{align}
     \lambda_{xy,\eta,i} = \frac{\eta}{1-\alpha_i \kappa_{xy,i}^{\frac{1}{2}}}= \frac{\eta}{1- \frac{\sqrt{\varepsilon^2/4 + \tau (2\eta) \alpha_i^2} -\varepsilon/2}{\tau} }. \label{formula:lambda_xy_eta}
 \end{align}
Due to equation \eqref{eq:SVD_XY}, we get
  $\alpha_i \lambda_{xy,\eta,i} = (YX)_{i,i}$. Combine with equation \eqref{equation:YX_Lambda_Diag},  we get
\begin{align}
    \lambda_{xy,i} = \frac{\lambda_{xy,\eta,i} - \eta}{\alpha_i \lambda_{xy,\eta,i}}. \label{formula:Lambda_xy}
\end{align}
 
 \paragraph{Covariance matrix:} Recall from equations \eqref{equation:X} and \eqref{equation:Y}, we have
 \begin{align*}
     X = U_{\mu,\eta} \Lambda_{\mu,\eta}^{-\frac{1}{2}} U^{\top}\Lambda_{xy,\eta}^{\frac{1}{2}}; \qquad  Y= \Lambda_{xy,\eta}^{\frac{1}{2}} V \Lambda_{\nu,\eta}^{-\frac{1}{2}} U_{\nu,\eta}^{\top}.
 \end{align*}
Since equation \eqref{equation:SVD_XY}, $V\Lambda_{\nu,\eta}^{-\frac{1}{2}} U_{\nu}^{\top}U_{\mu}\Lambda_{\mu,\eta}^{-\frac{1}{2}} U^{\top}$ is a diagonal matrix. It means that 
\begin{align*}
(VU_{\nu}^{\top} ) U_{\nu}\Lambda_{\nu,\eta}^{-\frac{1}{2}} U_{\nu}^{\top}U_{\mu}\Lambda_{\mu,\eta}^{-\frac{1}{2}}U_{\mu}^{\top}(U_{\mu} U) = (VU_{\nu}^{\top} ) (\Sigma_{\nu,\eta}^{-\frac{1}{2}}\Sigma_{\mu,\eta}^{-\frac{1}{2}}) (U_{\mu} U^{\top})= \mathsf{diag}(\alpha_i)_{i=1}^d,
\end{align*}
 which means that the $\alpha_i$ are the singular values of  $\Sigma_{\nu,\eta}^{-\frac{1}{2}}\Sigma_{\mu,\eta}^{-\frac{1}{2}}$. Then
 \begin{align*}
     \Sigma_{\nu,\eta}^{-\frac{1}{2}}\Sigma_{\mu,\eta}^{-\frac{1}{2}} = \big[U_{\nu} V^{\top} \big] \mathsf{diag}(\alpha_i) \big[UU_{\mu}^{\top}\big] = U_{\mu\nu,\eta} \mathsf{diag}(\alpha_i) V_{\mu\nu,\eta}^{\top}
 \end{align*}
which is a SVD of $\Sigma_{\nu,\eta}^{-\frac{1}{2}}\Sigma_{\mu,\eta}^{-\frac{1}{2}}$, where 
$U_{\mu}U^{\top} = V_{\mu\nu,\eta}$ and $VU_{\nu}^{\top} =U_{\mu\nu,\eta}^{\top}$. 
Put them into the equation \eqref{equation:SVD_XY}
\begin{align}
    K_{xy}&=X\Lambda_{xy}Y = U_{\mu} \Lambda_{\mu,\eta}^{-\frac{1}{2}} U^{\top} \Lambda_{xy,\eta}^{\frac{1}{2}} \Lambda_{xy} \Lambda_{xy,\eta}^{\frac{1}{2}} V \Lambda_{\nu,\eta}^{-\frac{1}{2}} U_{\nu}^{\top}= \Sigma_{\mu,\eta}^{-\frac{1}{2}} \big[U_{\mu} U^{\top}\big] \big[\Lambda_{xy,\eta}^{\frac{1}{2}}\Lambda_{xy} \Lambda_{xy,\eta}^{\frac{1}{2}} \big] \big[VU_{\nu}^{\top}\big]\Sigma_{\nu,\eta}^{-\frac{1}{2}} \nonumber \\
    &= \Sigma_{\mu,\eta}^{-\frac{1}{2}}\left[V_{\mu\nu,\eta} \mathsf{diag}\Big(\frac{\lambda_{xy,\eta,i}-\eta}{\alpha_i}\Big)_{i=1}^d U^{\top}_{\mu\nu,\eta} \right]\Sigma_{\nu,\eta}^{-\frac{1}{2}} \nonumber \\
    &= \eta \Sigma_{\mu,\eta}^{-\frac{1}{2}}  A(\id-A)^{-1} (B^{\top}B)^{-\frac{1}{2}} B^{-1}\Sigma_{\nu,\eta}^{-\frac{1}{2}}. \label{formula:K_xy}
\end{align}
We could obtain it from
\begin{align*}
    B &= U_{\mu\nu,\eta} \mathsf{diag}(\alpha_i)_{i=1}^d V_{\mu\nu,\eta}^{\top} \Rightarrow B^{\top}B = V_{\mu\nu,\eta}\mathsf{diag}(\alpha_i^2)_{i=1}^d V_{\mu\nu,\eta}^{\top} \\
     A &= \frac{1}{\tau}V_{\mu\nu,\eta} \mathsf{diag}\Big(\sqrt{\varepsilon^2/4 + \tau(2\eta) \alpha_i^2} -\varepsilon/2 \Big)_{i=1}^d V_{\mu\nu,\eta}^{\top} \\
     &=\frac{1}{\tau} \left\{\Big[ \frac{\varepsilon^2}{4} \id + \tau (2\eta) B^{\top}B \Big]^{\frac{1}{2}} - \frac{\varepsilon}{2}\id\right\} \\
     \eta(\id-A)^{-1} & = V_{\mu\nu,\eta} \mathsf{diag}(\lambda_{xy,\eta,i})_{i=1}^d V_{\mu\nu,\eta}^{\top}\\
 \eta \big\{(\id-A)^{-1} - \id\big\} (B^{\top}B)^{\frac{1}{2}}&=  V_{\mu\nu,\eta} \mathsf{diag}\left(\frac{\lambda_{xy,\eta,i}-\eta}{\alpha_i} \right)_{i=1}^d V_{\mu\nu,\tau}^{\top}.
\end{align*}
Multiply both side with $B^{-1} = V_{\mu\nu,\eta}\mathsf{diag}(\alpha_i) U_{\mu\nu,\eta}^{\top}$, we obtain the formula in \eqref{formula:K_xy}.

For the $\Sigma_x$, multiply with $X^{\top}$ to the right side of equation \eqref{equation:X_Lambda_xy}, we have
 \begin{align}
      \Sigma_{\mu,\eta} XX^{\top} - Y^{\top}\Lambda_{xy}^{\top}X^{\top} - \eta (X^{-1})^{\top} X^{\top} &= \mathbf{0} \nonumber \\
      \Sigma_{\mu,\eta} \Sigma_x &= (X\Lambda_{xy}Y)^{\top} + \eta \id \nonumber\\
      \Sigma_x &= \Sigma_{\mu,\eta}^{-1}(K_{xy}^{\top} + \eta \id) \label{formula:Sigma_x_K_xy}.
 \end{align}
 Another way to derive $\Sigma_x$ is to use equation \eqref{equation:X} of $X$ and equation \eqref{formula:lambda_xy_eta} of $\lambda_{xy,\eta}$
 \begin{align}
    \Sigma_x&= XX^{\top}= U_{\mu} \Lambda_{\mu,\eta}^{-\frac{1}{2}} U^{\top} \Lambda_{xy,\eta} U \Lambda_{\mu,\eta}^{-\frac{1}{2}} U_{\mu}^{\top} = \Sigma_{\mu,\eta}^{-\frac{1}{2}} U_{\mu}U^{\top}\Lambda_{xy,\eta} U U_{\mu}^{\top}\Sigma_{\mu,\eta}^{-\frac{1}{2}} \nonumber \\
    &=\Sigma_{\mu,\eta}^{-\frac{1}{2}}V_{\mu\nu,\eta}\Lambda_{xy,\eta}V_{\mu\nu,\eta}^{\top}\Sigma_{\mu,\eta}^{-\frac{1}{2}} \nonumber\\
    &= \eta \Sigma_{\mu,\eta}^{-\frac{1}{2}} A(B^{\top}B)^{-\frac{1}{2}} \Sigma_{\mu,\eta}^{-\frac{1}{2}},
 \end{align}
 where the last equation is obtained from the equation \eqref{formula:Lambda_xy}
 \begin{align*}
     V_{\mu\nu,\eta} \mathsf{diag}(\lambda_{xy,i})_{i=1}^d V_{\mu\nu,\eta}^{\top} &= \eta \big\{(\id -A)^{-1} - \id \big\} (\id -A) (B^{\top}B)^{-\frac{1}{2}} = \eta \big\{\id - (\id-A) \big\} (B^{\top}B)^{-\frac{1}{2}} \\
     &= \eta A(B^{\top}B)^{-\frac{1}{2}}.
 \end{align*}
 Similarly for equation \eqref{equation:Y_Lambda_xy}
 \begin{align}
    Y^{\top} Y \Sigma_{\nu,\eta} - Y^{\top}\Lambda_{xy}^{\top} X^{\top} - \eta \id &= \mathbf{0}  \nonumber\\
    \Sigma_y = (K_{xy}^{\top}+\eta \id) \Sigma_{\nu,\eta}^{-1}. \label{formula:Sigma_y_K_xy}
 \end{align}
 or we could derive another way to obtain $\Sigma_y = \eta \Sigma_{\nu,\eta}^{-\frac{1}{2}} A(B^{\top}B)^{-\frac{1}{2}}\Sigma_{\nu,\eta}^{-\frac{1}{2}}$.
 
Next we calculate the $\Upsilon_{\Sigma^*}$.  Since equations  \eqref{formula:Sigma_x_K_xy} and \eqref{formula:Sigma_y_K_xy}, we have
\begin{align*}
   \tr(\Sigma_x \Sigma_{\mu,\eta}) + \tr(\Sigma_y \Sigma_{\nu,\eta}) - 2 \tr(K_{xy}) = 2\eta.
\end{align*}
We also have
\begin{align*}
    \prod_{i=1}^d (1-\kappa_{xy,i}) &= \det\big(\id - \mathsf{diag}(\kappa_{xy,i})_{i=1}^d\big)\\
    V_{\mu\nu,\eta} \mathsf{diag}(\kappa_{xy,i})_{i=1}^d V_{\mu\nu,\eta}^{\top
} &= A^2 (B^{\top}B)^{-1}. \end{align*}
It means that $\sum_{i=1}^d\log(1-\kappa_{xy,i})_{i=1}^d = \log\det\big(\id - A^2(B^{\top}B)^{-1}\big)$. Put all the results into equation
\eqref{equation:Upsilon_Sigma_simple}, we obtain
 \begin{align*}
 \Upsilon_{\Sigma}&=\tr(\Sigma_x \Sigma_{\mu,\eta}) + \tr(\Sigma_y \Sigma_{\nu,\eta}) - 2 \tr(K_{xy}) - \eta \Big[\log\Big(\frac{\det(\Sigma_x)}{\det(\Sigma_{\mu})} \Big) + \log \Big(\frac{\det(\Sigma_y)}{ \det(\Sigma_{\nu}) }\Big)\Big] - \frac{\varepsilon}{2}\sum_{i=1}^d\log(1-\kappa_{xy,i})\\
     \Upsilon_{\Sigma^*} &= 2\eta - 2\eta \Big\{\log(\eta) +\log \det(A) - \log \det(B) - \frac{1}{2}\log \det(\Sigma_{\mu}) -\frac{1}{2}\log \det(\Sigma_{\nu}) \Big\} -\frac{\varepsilon}{2} \log \det\big(\id - A^2 (B^{\top}B)^{-1}\big) \\
     &= 2\eta\Big\{1-\log(\eta) +\log \det(A) - \log \det(B) -\frac{1}{2} \log \det(\Sigma_{\mu}\Sigma_{\nu}) \Big\} -\frac{\varepsilon}{2} \log\det\big(\id -A^2(B^{\top}B)^{-1}\big).
 \end{align*}
 \paragraph{Step 2: Calculation of $m_{\pi^*}$} We need to find the minimizer of the following problem:
 \begin{align*}
     \min_{m_\pi>0}~m_{\pi} \Upsilon^* + \tau \KL(m_{\pi}\|m_{\mu})
+\tau \KL(m_{\pi}\|m_{\nu}) +\varepsilon \KL(m_{\pi}\|m_{\mu}m_{\nu}).
 \end{align*}
 By part (c) of Lemma \ref{lemma:maximizer_linear_log}, we obtain
 \begin{align*}
     m_{\pi^*}= m_{\mu}^{\frac{\tau}{2\tau+\varepsilon}} m_{\nu}^{\frac{\tau}{2\tau + \varepsilon}} (m_{\mu}m_{\nu})^{\frac{\varepsilon}{2\tau + \varepsilon}} \exp\Big\{\frac{-\Upsilon^*}{2\tau + \varepsilon} \Big\}.
 \end{align*}
Hence, we have thus proved our claims.
\end{proof}

\end{document}


%

%

\onecolumn
\aistatstitle{Instructions for Paper Submissions to AISTATS 2022: \\
Supplementary Materials}

\section{FORMATTING INSTRUCTIONS}

To prepare a supplementary pdf file, we ask the authors to use \texttt{aistats2022.sty} as a style file and to follow the same formatting instructions as in the main paper.
The only difference is that the supplementary material must be in a \emph{single-column} format.
You can use \texttt{supplement.tex} in our starter pack as a starting point, or append the supplementary content to the main paper and split the final PDF into two separate files.

Note that reviewers are under no obligation to examine your supplementary material.

\section{MISSING PROOFS}

The supplementary materials may contain detailed proofs of the results that are missing in the main paper.

\subsection{Proof of Lemma 3}

\textit{In this section, we present the detailed proof of Lemma 3 and then [ ... ]}

\section{ADDITIONAL EXPERIMENTS}

If you have additional experimental results, you may include them in the supplementary materials.

\subsection{The Effect of Regularization Parameter}

\textit{Our algorithm depends on the regularization parameter $\lambda$. Figure 1 below illustrates the effect of this parameter on the performance of our algorithm. As we can see, [ ... ]}

\vfill